\numberwithin{theorem}{section}
\newcommand{\mf}{\operatorname{mf}}
\DeclareMathOperator{\ff}{ff}
\newcommand{\fib}{\operatorname{fib}}
\newcommand{\base}{\operatorname{base}}
\DeclareMathOperator{\cl}{cl}
\newcommand{\cB}{\mathcal{B}}
\newcommand{\cK}{\mathcal{K}}
\newcommand{\cX}{\mathcal{X}}
\newcommand{\cY}{\mathcal{Y}}
\newcommand{\ellvar}{\ensuremath{\boldsymbol{\ell}}}
\newcommand{\mink}{\mathbb{M}}
\DeclareMathOperator{\coker}{coker}
\DeclareMathOperator{\diag}{diag}
\newcommand{\scl}{\mathrm{scl}}
\newcommand\Psisclsc{\Psi_{\scl, \sc}}
\newcommand\NP{\mathrm{NP}}
\newcommand\SP{\mathrm{SP}}
\newcommand{\Rad}{\mathcal{R}}
\DeclareMathOperator{\fey}{Fey}
\DeclareMathOperator{\sources}{src}
\DeclareMathOperator{\sinks}{snk}
\newcommand\fut{\mathrm{f}}
\newcommand\pas{\mathrm{p}}
\newcommand{\Kf}{K^{\fut}}
\newcommand{\Kp}{K^{\pas}}
\newcommand{\Kfp}{K^{\fut/\pas}}
\newcommand{\fibeq}{\operatorname{fibeq}}
\renewcommand{\sc}{\mathrm{sc}}
\newcommand\poles{C}
\newcommand{\tsc}{3\sc}
\newcommand\Psitsc{{}^{\tsc}\Psi}
\newcommand\Phitsc{{}^{\tsc}\Phi}
\newcommand\Psisc{{}^{\sc}\Psi}
\newcommand\Sobsc{H_{\sc}}
\newcommand\Ssc{{}^{\sc}S}
\newcommand\Tsc{{}^{\sc}T}
\newcommand\Ttsc{{}^{\tsc}T}
\newcommand\Stsc{{}^{\tsc}S}
\newcommand\WFtsc{{}^{\tsc}\!\WF}
\newcommand\Elltsc{{}^{\tsc}\!\Ell}
\newcommand\Chartsc{{}^{\tsc}\!\Char}
\newcommand\Radtsc{{}^{\tsc}\Rad}
\newcommand\esssupptsc{{}^{\tsc}\!\esssupp}
\newcommand\Wperpo{\overline{W^\perp}}
\newcommand\Tsco{{}^{\sc}\overline{T}}
\newcommand\Ttsco{{}^{\tsc}\overline{T}}
\newcommand\Diffsc{\Diff_{\sc}}
\newcommand\Difftsc{\Diff_{\tsc}}
\newcommand{\Tdot}{{}^{\sc}\dot{T}}
\newcommand{\Tdoto}{{}^{\sc}\dot{\overline{T}}}
\newcommand{\Ctscd}{\dot{C}_{\tsc}[X; C]}
\newcommand{\Hamsc}{{}^{\sc}\!H}
\newcommand\prinsymb[1]{j_{\tsc,#1}}        
\newcommand\fibsymb[1]{\sigma_{\tsc,#1}}    
\newcommand\mfsymb[1]{\hat{N}_{\mf,#1}}     
\newcommand\ffsymb[1]{\hat{N}_{\ff,#1}}     
\newcommand\ffsymbpm[1]{\hat{N}_{\ff,\pm,#1}}     
\newcommand\prinsymbz{j_{\tsc}}
\newcommand\fibsymbz{\sigma_{\tsc}}
\newcommand\mfsymbz{\hat{N}_{\mf}}
\newcommand\ffsymbz{\hat{N}_{\ff}}
\newcommand\ffsymbzpm{\hat{N}_{\ff,\pm}}
\newcommand\scprinsymb[1]{j_{\sc,#1}}        
\newcommand\scfibsymb[1]{\sigma_{\sc,#1}}    
\newcommand\scnormsymb[1]{\hat{N}_{\sc,#1}}     
\newcommand\lra{\longrightarrow}
\newcommand\normres[1]{\norm{#1}_{-N, -M}}
\newcommand\Sobres{\Sobsc^{-N, -M}}
\newcommand\Pzeroinv{(P_{0})^{-1}_{\fey}}
\newcommand\PVinv{(P_{V})^{-1}_{\fey}}
\newcommand\Qsrcpsi[1]{\tilde{Q}_{\sources, #1}}
\DeclareMathOperator{\sgn}{sgn}
\title[Feynman propagator for Klein--Gordon]{
The Klein--Gordon equation on asymptotically Minkowski spacetimes: the Feynman propagator}
\author[D. Baskin]{Dean Baskin}
\address{Dean Baskin: Department of Mathematics, Texas A\&M University \\ College Station, TX 77843, USA}
\email{dbaskin@math.tamu.edu}
\author[M. Doll]{Moritz Doll}
\address{Moritz Doll: School of Mathematical and Physical Sciences, Macquarie University \\ NSW 2109 \\ Australia}
\address{School of Mathematics and Statistics, University of Melbourne \\ VIC 3010 \\ Australia}
\email{moritz.doll@unimelb.edu.au}
\author[J. Gell-Redman]{Jesse Gell-Redman}
\address{Jesse Gell-Redman: School of Mathematics and Statistics, University of Melbourne \\ VIC 3010 \\ Australia}
\email{jgell@unimelb.edu.au}
\begin{document}
\begin{abstract}
We develop a theory of Feynman propagators for the massive Klein--Gordon equation with
asymptotically static perturbations. Building on our previous work on the causal propagators, we
employ a framework based on propagation of singularities estimates in Vasy's 3sc-calculus. We
combine these estimates to prove global spacetime mapping properties for the Feynman propagator,
and to show that it satisfies a microlocal Hadamard condition.

We show that the Feynman propagator can be realized as the inverse of a mapping between appropriate
$L^2$-based Sobolev spaces with additional regularity near the asymptotic sources of the
Hamiltonian flow, realized as a family of radial points on a compactified spacetime.
\end{abstract}

\maketitle

\tableofcontents

\section{Introduction}

Let $(M,g)$ be a globally hyperbolic, asymptotically Minkowski
spacetime as considered previously by the authors, \cites{BDGR, BVW, GRHV}.  In this work, we construct the Feynman propagator for the
Klein--Gordon operator
\begin{align*}
    P_V \coloneqq \square_g - m^2 - V\, .
\end{align*}
where $m > 0$ and $V$ is a smooth potential function with spatial
decay.  

On flat Minkowski spacetime $(\mathbb{R}^{n + 1}, g_{\mink})$, the
Feynman propagator for the free Klein--Gordon operator,
$\square_{g_{\mink}} - m^2 = D_{t}^{2} - \Delta - m^{2}$, is the
Fourier multiplier by the distribution 
$\left( \tau^2 - |\zeta|^2 - m^2 + i 0 \right)^{-1}$.  In this work,
we adopt the perspective (taken in the work of the
Gell-Redman--Haber--Vasy \cites{GRHV} and Gerard--Wrochna \cite{GW2019}) that an appropriate
generalization of the Feynman propagator for $P_V$ is an operator
which is both a global spacetime inverse for $P_V$ (e.g.~on compactly
supported distributions) and which satisfies the well-known wavefront
set property, sometimes called the ``microlocal Hadamard condition'',
detailed below.

Our main results are Theorem~\ref{thm:invertible_no_bound} and
Theorem~\ref{thm:invertible strong decay},
which prove existence of the Feynman propagator, and Theorem~\ref{thm:regularity}, which proves a variant of the microlocal Hadamard condition.

We now provide a simplified version these results.
On Minkowski space $\RR^{n+1}$, let $V = V(t,z)$ be given by
\begin{align}\label{eq:simple_potential}
    V(t, z) = V_0(z) + V_1(t, z)\,,
\end{align}
where $V_0 \in S^{-1}(\RR^n; \RR)$ exhibits symbolic decay of order $-1$ and
$V_1 \in \CI(\RR^{n+1}; \RR)$ such that
\begin{align*}
    \abs{ \pa_t^k \pa_z^\alpha V_1(t,z)} \lesssim \ang{t, z}^{-1-k} \ang{z}^{-\abs{\alpha}}
\end{align*}
for all $k \in \NN_0$ and $\alpha \in \NN_0^n$.

\begin{theorem}\label{thm:main}
    If $H_{V_0} = \Delta + m^2 + V_0$ has purely absolutely continuous spectrum, then
    there exists a bounded linear operator $\PVinv : \CcmI(\RR^{n+1}) \to \CmI(\RR^{n+1})$ such that for all $f \in \CcmI(\RR^{n+1})$,
    \begin{align*}
        P_V \PVinv f = \PVinv P_V f = f\,,
    \end{align*}
    and
    \begin{align*}
        \WF_{\cl}(\PVinv f) &\subset \WF_{\cl}(f) \cup \bigcup_{s \geq 0} \Phi_s( \WF_{\cl}(f) \cap \Char(P_0) )\,.
    \end{align*}
    where $\WF_{\cl}$ denotes the classical wavefront set of a
    distribution, $\Phi_s$ is the flow generated by the Hamilton
    vector field of the principal symbol 
    of $P_{0} = \square_{g} - m^{2}$ on the cotangent bundle.
\end{theorem}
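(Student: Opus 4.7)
The plan is to obtain Theorem~\ref{thm:main} as a direct specialization of the more general results Theorem~\ref{thm:invertible_no_bound}, Theorem~\ref{thm:invertible strong decay}, and Theorem~\ref{thm:regularity}. Concretely, I would first check that the Minkowski spacetime $(\RR^{n+1}, g_{\mink})$ together with the potential $V = V_0 + V_1$ fits into the asymptotically Minkowski 3sc-framework of the paper. The symbolic decay of $V_0 \in S^{-1}(\RR^n;\RR)$ and the spacetime decay hypothesis on $V_1$ are tailored so that $V$ lifts to an element of the 3sc-calculus of lower order than $P_0 = \square_g - m^2$ at both fiber infinity and the boundary at spacetime infinity. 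Hence $P_V$ and $P_0$ have the same principal symbol and the same leading-order normal operators at the various boundary faces, so the global radial-point structure (sources \emph{src} at past infinity, sinks \emph{snk} at future infinity on $\Char(P_0)$) is unchanged.

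Next I would verify the remaining dynamical/spectral hypothesis required by Theorem~\ref{thm:invertible_no_bound} (or Theorem~\ref{thm:invertible strong decay}). The only spectral obstruction to invertibility is the appearance of a bound state or threshold resonance for the normal operator at the ``main face'' \ff, which reduces to a statement about $H_{V_0} = \Delta + m^2 + V_0$ acting on $\RR^n$. Purely absolutely continuous spectrum of $H_{V_0}$ rules out both eigenvalues and, combined with the symbolic decay of $V_0$, the threshold phenomena that would otherwise obstruct the invertibility of the normal family on the imaginary axis. This is exactly the hypothesis needed to apply the abstract invertibility theorems.

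Granted these two checks, Theorem~\ref{thm:invertible_no_bound} (or Theorem~\ref{thm:invertible strong decay}) produces a bounded two-sided inverse $\PVinv$ of $P_V$ between suitable weighted 3sc-Sobolev spaces that encode the Feynman regularity condition at \sources\ and \sinks. Since $\CcmI(\RR^{n+1})$ embeds continuously into any such Sobolev space (after multiplication by an appropriate weight) and $\CmI(\RR^{n+1})$ contains all of them, $\PVinv$ restricts to the claimed map $\CcmI \to \CmI$, and the two-sided inverse property is preserved. The wavefront-set bound on $\PVinv f$ then follows from Theorem~\ref{thm:regularity}: inside $\Ell(P_V)$ elliptic regularity gives $\WF_{\cl}(\PVinv f) = \WF_{\cl}(f)$, while on $\Char(P_0) = \Char(P_V)$ the propagation-of-singularities estimates used in the proof of Theorem~\ref{thm:regularity} propagate classical wavefront from $\WF_{\cl}(f)$ in the forward direction of the Hamilton flow $\Phi_s$, with the Feynman condition at the sources $\sources$ serving as the source-side boundary data guaranteeing no spurious singularities enter.

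The main obstacle in carrying this out is translating between the refined wavefront/module-regularity notions native to the 3sc-calculus (which involve both fiber infinity and spatial infinity, and which are what the abstract propagation theorem controls) and the purely classical wavefront set $\WF_{\cl}$ appearing in the statement. In particular, one must argue that the Feynman condition imposed at the asymptotic sources does not introduce classical wavefront away from the $\Phi_s$-forward flowout of $\WF_{\cl}(f) \cap \Char(P_0)$; this requires showing that the contribution from radial-point regularity at $\sources$ and $\sinks$ lives only at spatial/temporal infinity and hence is invisible to $\WF_{\cl}$ after restriction to the interior. Once this translation is made, the stated inclusion follows immediately from the propagation results already established.
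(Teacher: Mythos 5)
Your approach matches the paper's: the actual proof of Theorem~\ref{thm:main} is a one-liner citing Theorem~\ref{thm:invertible_no_bound}, Proposition~\ref{prop:unique_inverse} (which establishes independence of the choices of $s,\ell_0,\ell_+,Q_{\sources}$ — the point you handle somewhat implicitly when restricting to $\CcmI$), and Theorem~\ref{thm:regularity}. One correction to your parenthetical description of the radial structure: for the Feynman problem $\Rad_{\sources} = \Rad^f_- \cup \Rad^p_+$ has components at \emph{both} $\iota^+$ and $\iota^-$, distinguished by the sign of $\tau$ rather than the sign of $t$ (it is the causal propagators whose above-threshold condition is imposed purely at one temporal end), and this is exactly why a spacetime weight $\ellvar(t,z)$ cannot encode the Feynman condition near the poles and the microlocal cutoffs $Q_{\sources}$ are introduced.
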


Theorem~\ref{thm:invertible_no_bound} is in fact more general and allows for the possibility of
\textbf{first order perturbations},
as well as for \textbf{different limiting potentials} as $t\to \pm \infty$.
In the case that the limiting Hamiltonians have \textbf{bound states}, we can prove the same conclusions under slightly stronger assumptions on decay of $V - V_0$,
see Theorem~\ref{thm:invertible strong decay} for a precise statement.
The appearance of the
operator $P_{0}$ in the theorem is explained in our previous work
\cite{BDGR}.  We mention here that the characteristic sets and the
flows of $P_{V}$ and $P_{0}$ are identical away from the singular
locus of $V$ on a compactified spacetime, a set which consists of
two points, the ``poles'', described in detail below.

There are four standard propagators, or inverses, that arise in the
study of wave propagation: the forward and backward causal
propagators, which we denote by $G_{\pm}$, and the Feynman and
anti-Feynman propagators.
Our work applies equally well
to anti-Feynman propagators, which can be constructed exactly as below
but with the opposite direction of regularity propagation along the flow.

The causal propagators $G_{\pm}$ can be characterized easily by the support condition
\begin{align*}
    \supp f \subset \set{\pm t \geq T} \implies \supp G_{\pm} f \subset \set{\pm t \geq T}\,.
\end{align*}
The Feynman propagator $\PVinv$, however, does not have such a simple characterization.
The physical intuition is that $\PVinv$ propagates positive energy forward in time and negative energy backward in time.
This can be made precise either by prescribing asymptotics or using a wavefront set condition.
We describe the wavefront set condition below in Section~\ref{sec:regularity}.

The asymptotic condition is simple to describe in the case of the free
Klein--Gordon equation on Minkowski space.
There we can solve $P_0 u = f$ for $f \in \CcI(\RR^{n+1})$ using the Fourier transform and observe that solutions have
asymptotics
\begin{align}\label{eq:free asymptotics}
    u = t^{-n/2} \left( a^{\fut}_+(z/t) e^{im\sqrt{t^2 - \abs{z}^2}} + a^{\fut}_-(z/t) e^{-im\sqrt{t^2 - \abs{z}^2}}\right) \left(1 + O(1/t)\right)
\end{align}
as $t \to + \infty$ and $\abs{z}/t \leq c < 1$ for some functions
$a^{\fut}_+$ and $a^{\fut}_-$.  (Here the superscript $\fut$ stands
for ``future'', not for the forcing.)
Similarly we find $a^{\pas}_{\pm}$ for $t \to - \infty$.
\textit{The Feynman propagator produces the solution operator
  with $a^{\fut}_- \equiv 0$ and $a^{\pas}_+ \equiv 0$.
}

The relationship between the Hamiltonian flow and the asymptotic form
for Feynman solutions can be understood through consideration of the
phase functions
\[
  \phi_{\pm}(t,z) = \pm m \sqrt{t^2 - \abs{z}^2}.
\]
The graphs of these functions in the cotangent bundle, i.e., the
submanifold
\begin{align*}
    \set{ (t, z, \pa_t \phi_\pm, \pa_z \phi_\pm) : (t, z) \in \RR^{n + 1}}
\end{align*}
defines the surface
\begin{equation} 
  \set{ \tau^2 - (\abs{\zeta}^2 + m^2) = 0, \  \tau z + t \zeta = 0, \pm\tau >
    0}. \label{eq:radial-set-1}
\end{equation}

We consider a compactification $\overline{\RR^{n+1}}$ of $\RR^{n+1}$;
the boundary of this compactification is (locally) given by $x = \pm
1/t = 0$ and is parametrized by the variable $y= \pm z / t$.
The set~\eqref{eq:radial-set-1} can be restricted to the boundary;
this restriction has four components, corresponding to the choices of $\pm t
> 0$ and $\pm \tau > 0$, and can be identified precisely as the limit points of the
trajectories of the Hamiltonian flow of the Hamiltonian function
$\tau^2 - (\abs{\zeta}^2 + m^2)$, i.e., the symbol of $P_0$.

The coefficients $a^{\fut}_-, a^{\pas}_+$ correspond to the sources of the
Hamiltonian flow, $\Rad_{\sources}$.  (In terms of the sign choices
above, $\Rad_{\sources}$ corresponds to $t<0, \tau>0$ and
$t>0,\tau < 0$.)
The coefficients
$a^{\fut}_+, a^{\pas}_-$ microlocally correspond to the sinks $\Rad_{\sinks}$
($t>0,\tau>0$ and $t<0,\tau<0$); nontrivial asymptotics at the former are
excluded by the mapping properties of the Feynman propagator, while at
the latter they are allowed.

To construct the Feynman propagator, we follow the method
developed by Vasy~\cite{Vasy2013}, in which a propagator is realized as the inverse of
a Fredholm operator.  That is, we define families of
Hilbert spaces $\cX, \cY$ of tempered distributions on
$\mathbb{R}^{n + 1}$ such that $P_V \colon \cX \lra \cY$ is a Fredholm
operator, and then subsequently show that this operator is: (1)
invertible under certain assumptions on the potential $V$, and (2) its
inverse satisfies the defining properties of the Feynman propagator.

In the absence of a potential $V$, or more generally in the setting in
which $V = V(t, z)$ is a decaying scattering potential on
$\mathbb{R}^{n + 1}$ (i.e., $V$ enjoys symbolic decay jointly in space
and time), Gerard--Wrochna~\cite{GW2019} constructed the Feynman
propagator via a related method.  In the same setting,
Vasy~\cite{Vasy2018}  proved estimates
which lead directly to an alternative construction of a Feynman
propagator, which we reviewed in our prior
work~\cite{BDGR}*{Sect.~2}.

In each of these constructions, the spaces $\cX, \cY$ referred to above are subspaces of scattering
Sobolev spaces
$\Sobsc^{s, \ell}(\mathbb{R}^{n + 1}) = \ang{t, x}^{- \ell} \Sobsc^{s}(\mathbb{R}^{n + 1})$.
In Vasy's treatment~\cite{Vasy2018}
(and in our treatment of causal propagators~\cite{BDGR}) one takes $\ell = \ellvar$ to be a variable
order, i.e., a function $\ellvar = \ellvar(t, z, \tau,
\zeta)$.  Which propagator is selected depends on the properties of
$\ellvar$, namely whether it is above or below the threshold value of $-1/2$ at the
four components of the radial set.  For the Feynman propagator, one
uses $\ellvar > -1/2$ at $\Rad_{\sources}$ and $\ellvar <
-1/2$ at $\Rad_{\sinks}$.

To analyze $P_V$ for asymptotically static potentials $V$, we use an
adaptation of Vasy's $\tsc$-pseudodifferential calculus introduced in
\cites{Vasy2000,Vasy2001}.  The modifications needed to treat the
Klein--Gordon operator were presented in our prior paper \cite{BDGR}, where we proved
propagation estimates at all parts of phase space for the operators
$P_V$.  In that paper, we gave a construction of the causal
propagators, in which the range of, say, the forward causal propagator
$G_+$ is contained in a variable order weighted space
$\Sobsc^{s, \ellvar_+}(\mathbb{R}^{n + 1})$, in which
$\ellvar_+ = \ellvar_+(t, z)$ is a function on space time which
satisfies $\ellvar_+ > -1/2$ at $\iota^-$ and $\ellvar_+ < -1/2$ at
$\iota^+$, where $\iota^+, \iota^-$ are future/past timelike infinity,
respectively.  The significance of these conditions is that solutions
$G_+ f$ are allowed below threshold asymptotics only to the future.

With this method, the main distinction between the treatment of the free Klein--Gordon operator $P_0$ and the perturbed
operators $P_V$ is that the potential $V$ is not a smooth function on
the standard radial compactification
$X = \overline{\mathbb{R}^{n + 1}}$.  It fails to be smooth precisely
at the ``north/south poles'', $\NP/\SP$, depicted in
Figure~\ref{fig:fiber_np} below. To
analyze the behavior of $P_V$ near the poles, one uses the
(operator-valued) indicial
operators $\ffsymbzpm(P_V)$, which are
essentially the Fourier transform (in time) of the limiting operator.

The global nature of the indicial operator
presents a technical obstacle in that variable decay orders $\ellvar$
are incompatible
with the indicial family.  Since both $\Rad_{\sinks}$ and $\Rad_{\sources}$
have nontrivial projections at both poles, any construction of the
Feynman propagator with Vasy's method must confront this issue.  We do
so by \textit{avoiding the use of variable order spaces altogether,}
which is a key technical novelty of our construction.
Instead, we define spaces which enforce above threshold behavior at
$\Rad_{\sources}$ through the use of a microlocal cutoff.  That is, we
use microlocal cutoffs $Q_{\sources}$ supported at $\Rad_{\sources}$
to define the $\cX, \cY$ spaces and then subsequently show that the
Feynman propagator thus constructed does not depend on our choices.

As in our analysis of the causal propagators, the possible existence
of bound states for the limiting spatial Hamiltonians $H_{V_{\pm}}$
plays an important role in determining the potential existence of a
kernel or cokernel in the Fredholm map constructed above.  When
$H_{V_{\pm}}$ are positive, we are able to construct the Feynman
propagator as an operator on $\CcI(\RR^{n + 1})$ (or more generally on
distributions $\schwartz'$ satisfying an appropriate wavefront set
condition.)  This is done in Section \ref{sec:bound states}.

\subsection{Related work}

Our work generalizes the results from G\'erard--Wrochna~\cites{GW2019,
  GW2020} and the implied construction in \cite{Vasy2018}.  These
works are the closest in outline to the present work and yield Feynman propagators for Klein--Gordon operators whose
potential perturbations are ``scattering'' in spacetime; in particular
their potentials decay in spacetime.

The results of
Derezinski--Siemssen~\cites{DS2018,DS2019} are perhaps the most
similar to the present one in that they construct the Feynman
propagator in a non-trivial time-dependent setting using Kato's ``method of
evolution equations'' to construct propagators.
Related work on the existence of Feynman propagators via essential
self-adjointness of the Klein--Gordon operator by
Nakamura--Taira~\cites{NT2023,NT2023-2} and Vasy~\cite{Vasy2020}.  Moreover, the Feynman propagator
appears in index theory on Lorentzian manifolds~\cite{BS2019}.

As our results use many-body microlocal methods, the only assumptions
made in the finite time region are dynamical, namely the
assumption that the flow is non-trapping.
Therefore, the global spectral theoretic assumptions in
Derezinksi--Siemssen (see point 3 in the introduction~\cite{DS2019}) are not needed.  Interestingly, assumptions on
the point spectrum do arise in the $t \to \pm \infty$ limit.
Specifically, we obtain
a Feynman-type Fredholm problem so long as there is no point spectrum for the
limiting Hamiltonians at $0$.  We then obtain a Feynman
\textit{propagator}, i.e., inverse to the Fredholm mapping, if the
limiting Hamiltonians are positive.

Although we do not study many-body Hamiltonians directly here, since our work
uses the $\tsc$-calculus of Vasy, the analysis herein is related to
previous work on such operators.  We do not include an exhaustive
list of related work in that field here, but we note that the use of
microlocal cutoffs in many -body scattering has a long history.  We
draw particular attention to the pioneering work of 
Gérard--Isozaki--Skibsted~\cite{GIS}, which uses functions of the Hamiltonian to construct
commutators.  This strategy was later adopted by Vasy, and
informs our propagator construction, both here and in our previous work.

Constructing Fredholm problems for operators on non-compact manifolds
requires a precise analysis at infinity.  However, microlocal analysis
and in particular the calculus of Fourier integral operators has been
used by Duistermaat--Hörmander~\cite{DH1972} to prove the existence of
distinguished parametrices, which are inverses modulo smoothing
operators, for hyperbolic operators. In particular, they constructed a
Feynman parametrix. Radzikowski~\cite{Radzikowski1996} showed that the
wavefront set condition that distinguishes the Feynman parametrix is
equivalent to the Hadamard condition of quantum field theory.  We also
refer to Islam--Strohmaier~\cite{IS2012} for a more modern treatment
of distinguished parametrices which also allows for vector-valued
operators.

For other recent work using tools from the many-body analysis of Vasy
to study hyperbolic PDE, see; Hintz and Hintz--Vasy
\cites{HiTrans,HiLWAFS,HiGluing,HiVa2015,HiVa2023}.  See also the work
of Ma on many-body Hamiltonians \cite{Ma} as well as recent work of
Sussman on Klein--Gordon~\cite{Suss} using microlocal methods.

The present paper differs from the works above in at least three
significant ways.  To our knowledge, this paper is the first to construct the Feynman
propagator for asymptotically static potential without explicitly appealing to a global time-splitting or
spectral hypotheses on each time-slice.  Our construction also has the
advantage that a microlocal Hadamard condition is easily proved
(indeed, it is nearly automatic).  Finally, in contrast with other
related constructions, our construction does not use variable order Sobolev spaces.
Vasy~\cites{Vasy2021, Vasy2021-2} employs an approach
using second microlocalization to analyze $\Delta - \lambda^{2}$ in
the $\lambda \to 0^{+}$ limit.  The estimates proven in that work also
avoid the use of variable order spaces.  That setting is structurally
similar to that of the Klein--Gordon operator plus a potential,
provided the potential is ``spacetime scattering'', in particular
meaning the potential functions decay in time.

\subsection*{Outline of the paper}
The paper is organized as follows.
In Section~\ref{sec:scattering section} we consider the case that $V \equiv 0$, which puts us in the setting of scattering operators
and while it is possible in this case to use the same approach as for the causal propagator, we construct the Feynman propagator using
localizers to the radial set giving a simplified version of the proofs of the main theorems.
In Section~\ref{sec:tsc calculus} we recall the main properties of the $\tsc$-calculus introduced by Vasy~\cites{Vasy2000,Vasy2001}
and the extensions from \cite{BDGR} needed to treat the Klein--Gordon operator.
In Section~\ref{sec:tsc localizers} we define the class of localization operators that are used to define the Sobolev spaces
for the Feynman propagator and we state a slightly modified radial set estimate.
We construct the Feynman propagator in Section~\ref{sec:tsc prop construction} in the case that there are no bound states
and prove the Hadamard property in Section~\ref{sec:regularity}.
Finally, in Section~\ref{sec:bound states}, we discuss the modifications of the argument that are needed to treat the case that
the limiting Hamiltonians have bound states.

\subsection*{Acknowledgments}
This research was supported in part by the Australian Research Council
grant DP210103242 (JGR, MD) and National Science Foundation grant
DMS-1654056 (DB).  We acknowledge the support of MATRIX through the
programs ``Hyperbolic PDEs and Nonlinear Evolution Problems'' September
18-29, 2023 and
``Harmonic and Microlocal Analysis in PDEs'' December 16-20, 2024.
DB and MD were supported by the ESI through the
program ``Spectral Theory and Mathematical Relativity'' June 5-July
28, 2023.
The authors are grateful to Andrew Hassell, Kouichi Taira, Andras Vasy, and Jared Wunsch for helpful discussions.

\section{The Feynman propagator for the free Klein--Gordon equation}\label{sec:scattering section}

In this section, we consider the case $V \equiv 0$. This is simpler
than the case of general asymptotically static $V$, in that it can be
treated using only the scattering calculus of Melrose
\cite{Melrose94}.  In contrast with related treatments~\cite{Vasy2018} of this case,
including our own work~\cite{BDGR} on causal propagators, here we do not use variable order weight
functions for reasons outlined in the introduction.  Treating the case $V \equiv 0$ separately allows us to
focus on the new features of the construction, which uses
microlocalizers to define the Feynman spaces as opposed to variable
order weights.

We write
\begin{align*}
  P_0 \coloneqq \square_g - m^2,
\end{align*}
in particular we think of the background asymptotically Minkowski
metric $g$ as fixed; the subscript $0$ refers only to the potential
$V$ being identically zero.

Our construction of the Feynman propagator relies on a detailed
understanding of the Hamiltonian flow of $P_0$ on the characteristic
set, which we describe in Section \ref{sec:hamiltonian flow}.  We in
particular identify the family $\Rad_{\sources}$ of sources of the
Hamiltonian flow.  In Sections \ref{sec:microlocal cutoffs scattering}
we construct microlocal cutoffs $Q_{\sources}$ to $\Rad_{\sources}$,
and in Section \ref{sec:Feynman problems scattering} use to impose an
above threshold decay condition on distributions in the Feynman
domain, thereby making a Fredholm problem for $P_V$.  Then in Section
\ref{sec:feynman propagator scat} we discuss the Feynman propagator as
it arises as an inverse to these Fredholm problems.

\subsection{Geometry of the Hamiltonian flow}\label{sec:hamiltonian flow}

To analyze $P_0$, we use the scattering formalism of Melrose
\cite{Melrose94}.  In particular, we work on the radial
compactification
\begin{equation}
X = \overline{\mathbb{R}^{n + 1}}, \label{eq:radial compactification}
\end{equation}
a compact manifold whose boundary is defined by the vanishing of a
function $\rho_{\base} \in \CI(X)$, i.e., $\rho_{\base}^{-1}(0) = \partial X$
and $d \rho_{\base}|_{\pa X} \not = 0$.
We take
\begin{equation}
  \label{eq:spatial bdf}
  \rho_{\base} = \ang{t, z}^{-1} = \left( 1 + t^{2} + \abs{z}^{2}\right)^{-1/2} 
  \ge 1.
\end{equation}

With coordinates $t, z$ on $\mathbb{R}^{n + 1}$,
writing
\begin{equation}
  \label{eq:mink met}
  g_{\mink} = dt^{2} - dz^{2}
\end{equation}
we assume that our background metric $g$ is a non-trapping Lorentzian
metric and that the difference of the components of $g$ and $g_{\mink}$
satisfies, for all $j, k \in \{ 1, \dots, n + 1\}$,
\begin{equation}
  \label{eq:metric assumption}
  g_{jk} - (g_{\mink})_{jk} \in S^{-2}(\mathbb{R}^{n + 1}) = \rho_{\base}^{2} C^{\infty}(X).
\end{equation}
The d'Alembertian is $\square_g = - (1/\sqrt{|g|}) \partial_\mu g^{\mu
  \nu} \sqrt{|g|} \partial_\nu$, so for the Minkowski metric it is
just the free wave operator:
\begin{align*}
    \square_{g_{\mink}} = -\pa_t^2 + \sum_{j=1}^n \pa_{z_j}^2 = D_t^2
  - \Delta.
\end{align*}

The radial compactification used here is different from the Penrose
compactification of Minkowski space; in the radial compactification,
future and past causal infinity are the limit loci of forward and
backward timelike rays, respectively:
\begin{align*}
    \iota^+ &\coloneqq \set{(t,z) \colon t \geq \abs{z}} \cap \pa X\,,\\
    \iota^- &\coloneqq \set{(t,z) \colon -t \geq \abs{z}} \cap \pa
              X\,.
\end{align*}
Near $\iota^+$, we typically use coordinates $x = 1/t, y = z/t$, which
are valid in any region in which $0 \le x, |y| \le C$ where $C > 0$.
In these coordinates,
\begin{align*}
  \iota^+ = \{ x = 0, |y| \le 1 \}.
\end{align*}

The full symbol of $P_0$ is
\begin{align*}
    p(t,z,\tau, \zeta) = g^{-1}_{(t,z)}\left((\tau, \zeta),(\tau,\zeta)\right) + m^2\,.
\end{align*}
where
$g^{-1}_{(t,z)}$ is the inverse of the metric $g_{(t,z)}$.
This symbol $p$ is an example of a scattering symbol of order $(2,0)$,
as we describe now.

The scattering cotangent bundle
$\Tsc^*X = X \times \mathbb{R}^{n + 1}$ is simply the compactification
of the spacetime factor of the standard phase space
$T^* \mathbb{R}^{n + 1}$, the latter written with
coordinates $(t, z, \tau, \zeta)$.  Its fiber compactification
\begin{equation}
  \label{eq:1}
  \Tsco^* X = \overline{\mathbb{R}^{n + 1}_{t,z}} \times \overline{\mathbb{R}^{n + 1}_{\tau,\zeta}},
\end{equation}
on which the momentum/energy variables -- the ``fibers'' -- are also
radially compactified, is a manifold with corners, where the fiber
boundary is defined by the vanishing of $\rho_{\fib} \in
\CI(\overline{\mathbb{R}^{n + 1}_{\tau, \zeta}})$ where
\begin{equation}
  \label{eq:fiber bdf}
  \rho_{\fib} = \ang{\tau, \zeta}^{-1}.
\end{equation}
The classical scattering symbols can then be characterized simply by
\begin{equation}
  \label{eq:scattering symbols}
    \Ssc^{m,r}(\mathbb{R}^{n + 1}) = \rho_{\fib}^{-m} \rho_{\base}^{-r} \CI(\Tsco^* X).
  \end{equation}
The quantization of these symbols yields the space of scattering pseudodifferential
operators:
\begin{equation}
  \label{eq:scattering psidos}
 \Psisc^{m,r} =     \Op_L (\Ssc^{m,r}(X)),
\end{equation}
and the differential operators in this class are denoted
$\Diffsc^{m,r}(X)$.  Equivalently, $L \in \Diffsc^{m,r}(X)$ if an only
if
\begin{equation}
  \label{eq:differential operators}
L = \sum_{j + \abs{\alpha}\leq m}
a_{j,\alpha}(t,z) D_{t}^j D_z^{\alpha}, \quad a_{t,\alpha} \in
  S^{r}(\RR^{n + 1}).
\end{equation}

Therefore $P_0 \in \Diffsc^{2,0}$, and we can apply general scattering
calculus results to $P_0$.  In particular, we can define the
scattering principal symbol by the restriction
\begin{equation*}
  \scprinsymb{2,0}(P_0) \coloneqq \rho_{\fib}^2 \cdot p \rvert_{\partial
    \Tsco^* X},
\end{equation*}
which we write in two components corresponding to the two boundary
hypersurfaces of $\Tsco^* X$ defined by $\rho_{\fib} = 0$ (fiber
infinity) and $\rho_{\base} = 0$ (spacetime or ``base'' infinity),
\begin{equation}
  \label{eq:tsco bdry}
  \partial \Tsco^* X =    \Tsco^*_{\partial X} X \cup \Ssc^{*} X,
\end{equation}
where $\Ssc^{*}X \cong X \times \partial \overline{\RR^{n + 1}}$ denote
the sphere bundle of $\Tsc^{*} X$.
Following Melrose~\cite{Melrose94}*{Proposition 3} we write
\begin{equation}
  \label{eq:scat symbol of p0}
\scprinsymb{2,0}(P_0) = \left( \scfibsymb{2,0}(P_0), \scnormsymb{2,0}(P_0) \right).
\end{equation}
In the interior of $\mathbb{R}^{n + 1}$, $\scfibsymb{2,0}(P_0)$ is
identical to the standard principal symbol
of $P_0$, while in the interior of the fiber, i.e., for finite $(\tau,
\zeta)$, $\scnormsymb{2,0}(P_0)$ is the restriction of the total
symbol to the space time boundary $\partial X$.

Both components of $\scprinsymb{2,0}(P_0)$ are functions, and the
characteristic set is merely the vanishing locus of the symbol
restricted to the boundary, i.e., the union of the vanishing loci of
the components.  In the case of
$P_0$ this is easy to write down:
\begin{align*}
    \Char(P_0) = \set{ (q,\tau,\zeta) \colon q \in X,  g_{q}(\tau,
  \zeta) =  m^2}\,,
\end{align*}
where, when $q \in \partial X$ it is simply the condition $\tau^2 = \abs{\zeta}^2 +
m^2$, and over the interior of $X$ it is interpreted
as the equation $g_{q}(\tau, \zeta) = 0$ holding on points in the fiber boundary $\partial
\overline{\mathbb{R}^{n + 1}_{\tau, \xi}}$. 

The main object of study in these results is the Hamiltonian flow on
the characteristic set.  In the scattering setting, the Hamilton
vector field is also rescaled so that a well defined flow is induced
on the characteristic set in $\partial \Tsco^* X$.  Namely, we define the
scattering Hamilton vector field
\begin{equation}
  \label{eq:scat ham}
      \Hamsc_p \coloneqq (\rho_{\fib} / \rho_{\base}) \cdot  H_p
\end{equation}
where we use the standard definition of the Hamilton vector field
\begin{equation}
  \label{eq:ham vec gen}
      H_p \coloneqq \frac{\pa{p}}{\pa{\tau}} \pa_t +
      \frac{\pa{p}}{\pa{\zeta}} \pa_z - \frac{\pa p}{\pa t} \pa_\tau - \frac{\pa p}{\pa z} \pa_\zeta\,.
\end{equation}

We now discuss coordinates which are particularly convenient for the
computation of $\Hamsc_p$ and also for quantities of interest in later
sections.  First we note the general fact that boundary defining
functions such as $\rho_{\base}$ and $\rho_{\fib}$ are not unique.
Locally over a boundary component $\bullet \in \{\base,  \fib \}$, any
function $\tilde \rho$ satisfying
$1 / C < \tilde \rho / \rho_\bullet < C$ for $C > 0$ is also a valid boundary defining function in that
region.  For us the most convenient choices to replace $\rho_{\base}$
and $\rho_{\fib}$ are
\begin{equation}
  \label{eq:x and rho}
  x = (\sgn t)/t, \mbox{ and } \rho = (\sgn \tau)/\tau,
\end{equation}
respectively.  In regions in which $x, \rho < C$ we have coordinates
\begin{equation}
  \label{eq:simple co coordinates}
  x,\quad y = z/t, \quad \rho, \quad \mu = \zeta / \tau,
\end{equation}
and we obtain the expression
\begin{equation}
  \label{eq:scat ham2}
     (1/2)  \Hamsc_p =  -(\sgn t)(\sgn \tau) \left( x \pa_x + (\mu + y) \cdot \pa_y\right),
   \end{equation}
where, as the reader pleases, one can think of this as a redefinition
of $\Hamsc_p$ where $\rho_{\base}, \rho_{\fib}$ are replaced by $x,
\rho$, or one can think of the prefactor of
$(\rho_{\fib}/\tau)(x / \rho_{\base})$ as suppressed.  (For $a > 0$
a smooth, positive function on $\Tsco^* X$, the flow of $a \Hamsc_p$
on the characteristic set of $P_{0}$
is a smooth, non-degenerate reparametrization of the flow of
$\Hamsc_p$ and is therefore irrelevant in all statements below
regarding the Hamiltonian flow.)

The radial set is the vanishing locus of the scattering Hamilton
vector field in the boundary inside the characteristic set:
\begin{align*}
    \Rad \coloneqq \Char(P_0) \cap \Hamsc_p^{-1}(0) \subset \pa \Tsco^* X\,.
\end{align*}
The radial set of $P_0$ is computed in \cite{BDGR}*{Sect.\ 2.5}, where it is
shown that it lies over causal infinity, i.e., if $\pi \colon \Tsco^*X
\lra X$ is the projection to the base, then
\begin{align*}
  \pi(\Rad) = \iota^+ \cup \iota^-,
\end{align*}
and we write
\begin{align*}
    \Rad = \Rad^f \sqcup \Rad^p\,.
\end{align*}
for the parts of $\Rad$ lying over future/past causal infinity, meaning
\begin{align*}
    \Rad^f \coloneqq \Rad \cap \Tsco^*_{\iota^{+}} X, \quad     \Rad^p \coloneqq \Rad \cap \Tsco^*_{\iota^{-}} X\,.
\end{align*}
Both $\Rad^f$ and $\Rad^p$ consist of two connected components,
\begin{align*}
    \Rad^f = \Rad^f_+ \sqcup \Rad^f_-\,,\quad \Rad^p = \Rad^p_+ \sqcup \Rad^p_-\,,
\end{align*}
where
\begin{align*}
    \Rad^\bullet_\pm \coloneqq \Rad^\bullet \cap \set{\pm \tau \geq m}\,.
\end{align*}
For example, it is easy to show that, with $w = (\zeta / \tau) + y$,
then in the coordinates $x, w, \rho, \mu$, in the region $t > 0, \tau
> 0$,
\begin{align*}
  \Rad^f_+ = \set{x = 0,\ w = 0},
\end{align*}
where in these coordinates
\begin{align*}
(1/2)  \Hamsc_p = - x \pa_x - w \cdot \pa_w,
\end{align*}
showing that $\Rad^{f}_{+}$ is a sink of the flow.  The other three components are obtained similarly.
Finally, we denote the sources and sinks by
\begin{align*}
    \Rad_{\sources} &\coloneqq \Rad^f_- \cup \Rad^p_+\,,\\
    \Rad_{\sinks} &\coloneqq \Rad^f_+ \cup \Rad^p_-\,.
\end{align*}

The main theorem regarding the global structure of the Hamiltonian
flow on $\Char(P_0)$, which is central to the development of the
theory below is the following.
\begin{theorem}[cf.\ \cite{BDGR}*{Sect.\ 2.5}]\label{thm:global flow}
 The sets $\Rad_{\sources}$ and $\Rad_{\sinks}$ are, respectively,  global
 sources and sinks for the Hamiltonian flow on $\Char(P_0)$.
\end{theorem}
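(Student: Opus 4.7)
The plan is to combine a local dynamical analysis of $\Hamsc_p$ at each component of $\Rad$ with the non-trapping hypothesis on $g$ and an analysis of the reparametrized flow on the compact boundary characteristic set $\pa \Tsco^* X \cap \Char(P_0)$.

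First I would establish the local hyperbolic source/sink character of each component. Near $\Rad^f_+$ the text gives $(1/2)\Hamsc_p = -x\pa_x - w\cdot\pa_w$ in the coordinates $(x, w, \rho, \mu)$, and the symbolic decay \eqref{eq:metric assumption} of $g - g_{\mink}$ contributes only error terms vanishing to higher order at the radial set, so does not perturb the linearization at $\Rad^f_+$. This linearization has strictly negative eigenvalues in directions transverse to $\Rad^f_+$ inside $\Char(P_0) \cap \pa \Tsco^* X$, so $\Rad^f_+$ is a hyperbolic sink; applying $t$- and $\tau$-reversal symmetry to the same calculation gives analogous statements at $\Rad^p_-$ (sink) and $\Rad^f_-, \Rad^p_+$ (sources). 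Each component therefore carries an open local basin of attraction or repulsion.

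Next I would globalize. For any $q \in \Char(P_0)\setminus\Rad$ with $\pi(q) \in X^\circ$, the non-trapping assumption forces the bicharacteristic through $q$ to escape every compact subset of $\pi^{-1}(X^\circ)$ in both forward and backward flow time; since $\Hamsc_p$ is tangent to $\pa \Tsco^* X \cap \Char(P_0)$ and the latter is compact, the forward and backward limit sets of the orbit on the boundary are non-empty. A direct computation in the two coordinate charts on $\pa \Tsco^* X$ (one for finite $(\tau, \zeta)$ near fiber infinity of the base, one near fiber infinity of the cotangent variables), using \eqref{eq:scat ham2} and its fiber-infinity analogue, shows that the zero set of $\Hamsc_p$ on $\Char(P_0)\cap\pa \Tsco^* X$ is exactly $\Rad_{\sources} \sqcup \Rad_{\sinks}$. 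Hence any limit set consists of critical points lying in $\Rad$, and the hyperbolic structure from the first step pins it down to a single sink (resp.\ source) component.

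The main obstacle is ruling out non-trivial recurrence on $\Char(P_0)\cap\pa \Tsco^* X$: a priori orbits could cycle among source and sink components without converging. This is precluded by exhibiting a function monotone along the boundary flow and bounded on this compact set, constructed explicitly from the coordinates $(y, \mu)$ and their fiber-infinity counterparts using \eqref{eq:scat ham2}; such a function shows the reparametrized boundary flow on each face is of attractor--repeller type. Granting this, every non-radial orbit in $\Char(P_0)$ is globally connected from a component of $\Rad_{\sources}$ in backward time to a component of $\Rad_{\sinks}$ in forward time, which is the asserted global source/sink property.
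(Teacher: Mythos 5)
Your overall plan is structurally consistent with the argument the paper inherits from \cite{BDGR}*{Sect.~2.5}: establish the local normally hyperbolic source/sink character of each component of $\Rad$ from the explicit coordinate form \eqref{eq:scat ham2}, then globalize using non-trapping over the interior and a monotone quantity over the boundary faces. One strengthening that your sketch only partially exploits: since $g - g_{\mink} = O(\rho_{\base}^{2})$, the restriction of $\Hamsc_p$ to $\Tsco^{*}_{\partial X}X$ is \emph{identically} the Minkowski Hamilton vector field, not just equal to it up to higher order at $\Rad$. Thus the boundary dynamics is computable in closed form in every chart, with no perturbation needed there; the metric enters only over $X^{\circ}$, which is exactly where the non-trapping hypothesis is invoked.

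The genuine gap is in your last paragraph, and it is larger than "constructed explicitly from $(y,\mu)$" suggests. Within a fixed quadrant $\sgn t = \pm 1$, $\sgn\tau = \pm 1$ the choice $\abs{w}^{2} = \abs{\mu + y}^{2}$ does work, because \eqref{eq:scat ham2} gives $\Hamsc_p\abs{w}^{2} = -4(\sgn t)(\sgn\tau)\abs{w}^{2}$, which is single-signed and vanishes exactly on $\Rad$. But the coordinate $y = z/t$, hence $w$, degenerates near the equator of $\partial X$ (spacelike infinity), where $\sgn t$ is undefined, and the mass shell $\set{\tau^{2} = \abs{\zeta}^{2} + m^{2}}$ is non-empty there. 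A complete argument must track trajectories whose base point starts near or over the equator and show they drift into the causal caps $\iota^{\pm}$, where $\abs{w}^{2}$ takes over; that requires a further chart and a matching monotone quantity, which you do not supply. Relatedly, your sentence "Hence any limit set consists of critical points lying in $\Rad$" is stated before the recurrence obstruction is addressed, but it is precisely this claim the monotone function is needed to justify, so as written the logic is circular. Finally, in the globalization step, non-trapping gives escape from compacta of $T^{*}X^{\circ}$ for the rescaled null bicharacteristic flow at fiber infinity; you should make explicit that, because $\Ssc^{*}X$ is flow-invariant and its boundary over $X^{\circ}$ is the corner $\Ssc^{*}_{\partial X}X$, the forward and backward limit sets land in that corner specifically, where the explicit Minkowski flow $-\left(\sgn t\right)\left(\sgn\tau\right)\,w\cdot\partial_{w}$ at $\rho_{\fib} = 0$ then drives them to $\Rad$. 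That handoff between the non-trapping argument over the interior and the explicit boundary dynamics is where the proof actually lives, and it deserves to be spelled out rather than summarized as "the hyperbolic structure pins it down."
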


\subsection{Microlocal cutoffs}\label{sec:microlocal cutoffs scattering}

The definitions of  the spaces we use to construct the Feynman
propagator rely on microlocalization to the radial set, and we review
the basic features of this briefly.

Recall from \cite{Melrose94}*{Section 7}, that for $\Op_L(a) = A \in \Psisc^{m,r}$, the characteristic
set is in general the vanishing locus of $\scprinsymb{m,r}(A)$ in
$\partial \Tsco^* X$, and the elliptic set is its complement,
$\Ell(A) = \partial \Tsco^* X \setminus \Char(A)$.  The operator
wavefront set $\WF'(A)$ is the essential support $\esssupp(a)$ of the symbol.

For any compact subset $K \subset \partial \Tsco^* X$ and any open
neighborhood $U \subset \partial \Tsco^* X$ of $K$, a microlocal cutoff
to $K$ supported in $U$ is a $Q \in \Psisc^{0,0}$ such that
\begin{align*}
K \subset \Ell(Q) \mbox{ and } \WF'(Q) \subset U.
\end{align*}
We often assume in addition to this that, for some open neighborhood
$V$ of $K$ with $\overline{V} \subset U$, that
\begin{align*}
  \WF'(I - Q) \cap \overline V = \varnothing,
\end{align*}
which is to say that $Q$ is microlocally equal to the identity near $K$.

Microlocal elliptic regularity (cf. \cite{Vasy2018}*{Corollary 5.5}) states that if $A \in \Psisc^{m,r}$ and
$B, G \in \Psisc^{0,0}$ satisfy that $\WF'(G) \subset \Ell(A)$ and
$\WF'(B) \subset \Ell(G)$, then for any $M, N \in \RR$, there
is $C > 0$ such that, for any $s, \ell \in \RR$,
\begin{align}
  \label{eq:microlocal elliptic estimate}
  \norm{B u}_{s, \ell} \le C \left(  \norm{GAu}_{s-m, \ell-r} +   \norm{u}_{-N, -M} \right).
\end{align}

Recall the scattering wavefront set of a tempered distribution $u \in
\schwartz'(\RR^{n + 1})$, defined by its complement:
\begin{equation*}
\WF^{m,r}(u)^{c}  = \set{ q \in \partial \Tsco^{*} X \colon \exists
  A \in \Psisc^{m,r},\  q \in \Ell(A), \ Au
  \in L^{2}}.
\end{equation*}
Below we are interested in distributions $u$ which lie globally in
some Sobolev space $\Sobsc^{s, \ell_{0}}$ with $\ell_{0} < -1/2$ but lie in a better space, namely one with above threshold
decay, near $\Rad_{\sources}$.  Specifically, we work with $u$
which also satisfy, for some $\ell_{+} > -1/2$, that $\WF^{s,
  \ell_{+}}(u) \cap \Rad_{\sources} = \varnothing$.  However, this
condition is difficult to work with directly to obtain global Fredholm
estimates.  We therefore fix microlocal cutoffs to
$\Rad_{\sources}$ and work with spaces with manifest dependence on those cutoffs;
we then show that the solutions obtained by that method do not depend
on choices.

Microlocal cutoffs $Q_{\sources}$ to $\Rad_{\sources}$ are thus in
particular operators in $\Psisc^{0,0}$ such that
$\Rad_{\sources} \subset \Ell(Q_{\sources})$.  To construct them is
straightforward given the fact that $\Rad_{\sources}$ is a disjoint
union of smooth submanifolds which intersect the corner transversely.
Indeed, following the coordinate description above, the future radial
source $\Rad_{-}^f$ is the vanishing locus of the smooth coordinate
functions $x, w$ in the region where $0 \le - 1/\tau < C$ and
$|\zeta / \tau| < C$.  Thus a cutoff to this set is a quantization of
a symbol
\begin{equation}
  \label{eq:3}
  q_1 = \chi(x) \chi(|w|),
\end{equation}
where $\chi \in \CI(\mathbb{R})$ is a smooth bump function with
$\chi(s) = 1$ for $|s| < \delta / 2$ and $\chi(s) \equiv 0$ for $|s|
\ge \delta$.  If $q_2$ is the analogous symbol supported at the other
component of the sources, $\Rad^p_+$, then we can use
\begin{align}
  \label{eq:scattering source cutoff}
  Q_{\sources} = \Op_L(q_1) + \Op_L(q_2).
\end{align}
Thus we can take cutoffs $Q_{\sources}$ supported
in arbitrarily small neighborhoods of $\Rad_{\sources}$ by taking
$\delta > 0$ small in this definition.

\subsection{Sobolev spaces for the Feynman problem}\label{sec:Feynman
  problems scattering}

Fix a microlocalizer to the source,
$Q_{\sources}$.  For $s, \ell_-, \ell_+ \in \mathbb{R}$ with
$\ell_- < -1/2, \ell_+ > -1/2$, we estimate the quantity
\begin{equation*}
    \norm{u}_{s,\ell_0} +
    \norm{Q_{\sources} u}_{s,\ell_+}
\end{equation*}
in terms of appropriate norms of $P_{0} u$ and weaker norms of $u$.
Note that the finiteness of the displayed quantity implies that $u \in
H^{s, \ell_{0}}$ globally, and in addition that $\WF^{s, \ell_{+}} (u)
\cap \Rad_{\sources} = \varnothing$.  In particular, as we 
discuss below, such a distribution $u$ is above the threshold weight at the radial sources.

Digressing briefly, it is convenient to introduce notation for a
Hilbert space with a norm equivalent to the sum of these norms of $u$
just given. Indeed, for $\ell_0 < \ell_+$ and $A \in \Psisc^{0,0}$
(and, in later sections, $A \in \Psitsc^{0,0}$),
we set
\begin{align}
  \label{eq:sobolev-with-operator}
    H_A^{s,\ell_0, \ell_+}
    \coloneqq \{ u \in \Sobsc^{s,\ell_0} \colon A u \in \Sobsc^{s,\ell_+}\}\,.
\end{align}
This is complete with respect to the norm
\begin{align*}
    \norm{u}_{A,s,\ell_0, \ell_+}^2 \coloneqq
    \norm{u}_{s,\ell_0}^2 +
    \norm{A u}_{s,\ell_+}^2 \,.
\end{align*}
We observe that
\begin{align*}
    \Sobsc^{s,\ell_+} \subset H_A^{s,\ell_0,\ell_+} \subset \Sobsc^{s,\ell_0}\,.
\end{align*}

To state the global Fredholm estimate, now assume that we are given an
additional microlocalizer to the sources $Q_{\sources}' \in
\Psisc^{0,0}$ such that
\begin{equation}\label{eq:Qsrc condition}
    \Rad_{\sources} \subset \Ell(Q_{\sources}), \quad \WF'(Q_{\sources})
    \subset \Ell(Q_{\sources}'), \quad \WF'(Q_{\sources}') \cap
    \Rad_{\sinks} = \varnothing.
  \end{equation}
We must require, moreover, that $\Ell(Q_{\sources}')$ contains all
flow segments whose endpoints lie in $\WF'(Q_{\sources})$.  That is, if $q \in \WF'(Q_{\sources}) \cap
\Char(P_0)$, then we must have $\Phi_s(q) \in \Ell(Q_{\sources}')$ for all $s <
0$.  This  holds automatically, for example, if $\Rad_{\sources} \subset \Ell(Q_{\sources})$
and $\WF'(Q_{\sources})$ is convex under the flow on $\Char(P_0)$, as we will
arrange in our construction.  (This avoids the possibility of a flow
line exiting both $\WF'(Q_{\sources})$ and $\Ell(Q_{\sources}')$, in which
case propagation of singularities would not apply.)

  With these assumptions on $Q_{\sources}, Q_{\sources}'$, we have a
  global estimate, which says that, for some $C > 0$,
\begin{equation}
  \label{eq:global fredholm estimate}
  \norm{u}_{Q_{\sources},s,\ell_0, \ell_+} \le   C \left( \norm{P_0
    u}_{Q_{\sources}',s - 1,\ell_0 + 1, \ell_+ + 1} +
  \normres{u}\right) .
\end{equation}
We briefly discuss this estimate heuristically here, but
we elide its formal proof as we prove a similar estimate below in the
case $V \not \equiv 0$, see Lemma~\ref{lem:finite-dim_kernel}.

As with other estimates similar to \eqref{eq:global fredholm estimate}, this
estimate is obtained by combining three types of estimates which hold
at different parts of phase space: 1) elliptic estimates, which are
used away from the characteristic set, 2) real
principal-type propagation estimates, which are used on the
characteristic set away from the radial sets, and 3) radial points
estimates, which hold near the radial set and themselves come in two
varieties, one for above threshold decay and the other for below
\cite{BDGR}*{Sect.~2.7 and Sect.~7}.

The above threshold radial points estimates \cite{BDGR}*{Proposition~2.11} imply that, for
any $\ell_+'$ with $-1/2 < \ell_+' < \ell_+$ and any $N \in \mathbb{R}$,
\begin{equation}
  \label{eq:above thresh scattering}
  \norm{Q_{\sources} u}_{s, \ell_+} \lesssim   \norm{Q'_{\sources} P_0
    u}_{s - 1, \ell_+ + 1} + \norm{Q_{\sources}'  u}_{-N, \ell_+'} + \normres{u}
\end{equation}
This can be read as saying that $u$ is controlled in $\Sobsc^{s,
  \ell_+}$ in some neighborhood $U$ of $\Rad_{\sources}$ by $P_0 u$ in $\Sobsc^{s - 1,
  \ell_+ + 1}$ on a neighborhood $U'$ of $\Rad_{\sources}$ with
$\overline U \subset U'$, provided $u$ lies in an above threshold
space near $\Rad_{\sources}$.  Obtaining the global estimate
\eqref{eq:global fredholm estimate} from \eqref{eq:above thresh
  scattering} and the other propagation estimates is at this point a
standard argument; see, among many others, the arguments in
\cite{BDGR}*{Sect.\ 8}.  We also review this in greater detail in
Section \ref{sec:tsc prop construction}.

Following Vasy's development, natural Sobolev spaces corresponding to
the global Fredholm estimates are those in which both the left and the
right hand sides of  \eqref{eq:global fredholm estimate} are finite.
We thus define:
\begin{align*}
    \cX^{s, \ell_{0}, \ell_{+}} \coloneqq \{ u \in H_{Q_{\sources}}^{s,\ell_0,\ell_+} \colon P_V u \in H_{Q_{\sources}'}^{s-1,\ell_0 + 1, \ell_+ + 1}\}\,,\quad
    \cY^{s, \ell_{0}, \ell_{+}} \coloneqq H_{Q_{\sources}'}^{s,\ell_0,\ell_+}\,.
\end{align*}
We prefer the notation $\cX^{s, \ell_{0}, \ell_{+}}$ to the more
cumbersome $\cX^{s, \ell_{0}, \ell_{+}}_{Q_{\sources}, Q_{\sources}'}$
despite the obvious dependence of these spaces on the choices
$Q_{\sources}, Q_{\sources}'$.  We prove the following:
\begin{theorem}\label{thm:scattering Fredholm}
  For $s, \ell_{0}, \ell_{+} \in \mathbb{R}$, $\ell_{0} < -1/2,
  \ell_{+} > -1/2$, and $Q_{\sources}, Q_{\sources}' \in \Psisc^{0,0}$
  satisfying \eqref{eq:Qsrc condition}.  Then the map
  \begin{equation}
    \label{eq:Fred map scat}
    P_{0} \colon \cX^{s, \ell_{0}, \ell_{+}} \lra
    \cY^{s - 1, \ell_{0} + 1, \ell_{+}  +1}
  \end{equation}
is Fredholm.
\end{theorem}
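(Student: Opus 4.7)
The plan is to follow Vasy's standard Fredholm strategy: combine the forward global estimate \eqref{eq:global fredholm estimate} with a dual estimate that controls the cokernel, using that $P_{0}$ is formally self-adjoint so that the dual estimate is an \emph{anti-Feynman} analogue of the forward estimate. A compactness argument then yields finite-dimensional kernel and cokernel and closed range.

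First, I would verify that \eqref{eq:global fredholm estimate} holds on $\cX^{s,\ell_0,\ell_+}$. The ingredients are standard: microlocal elliptic regularity off $\Char(P_0)$, real principal type propagation along $\Hamsc_p$ on $\Char(P_0)\setminus\Rad$, the above-threshold radial sources estimate \eqref{eq:above thresh scattering}, and the below-threshold radial sinks estimate from \cite{BDGR}*{Sect.~2.7}. One propagates control from a neighborhood $\Ell(Q_{\sources})$ of $\Rad_{\sources}$ through the characteristic set to a neighborhood of $\Rad_{\sinks}$, with cutoffs chosen so that $\WF'(Q_{\sources})$ is convex along the flow; the convexity condition \eqref{eq:Qsrc condition} is precisely what ensures the propagation segments starting in $\WF'(Q_{\sources})$ remain in $\Ell(Q_{\sources}')$. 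Having \eqref{eq:global fredholm estimate}, choose $N, M$ large enough so that the inclusion $\cX^{s,\ell_0,\ell_+} \hookrightarrow \Sobres$ is compact; a standard functional-analytic argument (e.g.\ extracting Cauchy subsequences from bounded sequences of approximate kernel elements) then gives that $\ker P_0$ is finite-dimensional in $\cX$ and that $P_0(\cX)$ is closed in $\cY^{s-1,\ell_0+1,\ell_++1}$.

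For the cokernel, I would identify $(\cY^{s-1,\ell_0+1,\ell_++1})^*$ as a sum space of the form $\Sobsc^{1-s,-\ell_0-1} + (Q_{\sources}')^* \Sobsc^{1-s,-\ell_+-1}$, which is dual to the intersection-type description of $H_{Q_{\sources}'}^{s-1,\ell_0+1,\ell_++1}$. The cokernel of $P_0$ is then identified with $\{v \in (\cY^{s-1,\ell_0+1,\ell_++1})^* : P_0 v = 0 \text{ in } (\cX^{s,\ell_0,\ell_+})^*\}$. Under duality the weights flip sign, so above threshold at $\Rad_{\sources}$ in $\cX$ becomes a \textbf{below-threshold} condition at $\Rad_{\sources}$ on the dual side, while the dual distributions are permitted to live above threshold only in directions propagated from $\Rad_{\sinks}$. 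Since $P_0$ is formally self-adjoint, the required dual estimate is an anti-Feynman estimate for $P_0$: run the same propagation machinery but with reversed roles, invoking the above-threshold radial estimate at $\Rad_{\sinks}$ and the below-threshold estimate at $\Rad_{\sources}$, propagating opposite to the flow. This yields an analogue of \eqref{eq:global fredholm estimate} on the dual side and, by the same compactness argument, finite-dimensional cokernel.

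The main obstacle I expect is bookkeeping on the dual side. Because we have deliberately avoided variable order weights and encoded above-threshold behavior through microlocal cutoffs $Q_{\sources}, Q_{\sources}'$, the dual spaces depend nontrivially on these operators via their formal adjoints; matching the support/ellipticity hypotheses \eqref{eq:Qsrc condition} with the flow geometry near $\Rad_{\sinks}$ in the dual problem requires care, in particular verifying that the flow segments now travel from $\Rad_{\sinks}$ backward through $\WF'((Q_{\sources}')^*)$ into the ellipticity of $(Q_{\sources})^*$ without exiting the region where propagation applies. Modulo this (and the implicit construction of an auxiliary sink microlocalizer dual to $Q_{\sources}'$), all the propagation inputs already appear in \cite{BDGR} and the Fredholm conclusion then follows from the two estimates by the standard argument.
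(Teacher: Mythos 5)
Your proposal is essentially the paper's own approach: the paper's proof of this theorem is a short summary that defers the details to the $\tsc$ case (Section~\ref{sec:tsc prop construction}), and there the argument runs exactly as you describe — the global estimate \eqref{eq:global fredholm estimate} plus compactness of $H^{s,\ell_0,\ell_+}_{Q_{\sources}} \hookrightarrow \Sobres$ gives closed range and finite-dimensional kernel, and the cokernel is identified with $\ker P_0^*$ in $(\cY)'$, controlled by the anti-Feynman (above threshold at $\Rad_{\sinks}$, below at $\Rad_{\sources}$) version of the same estimate. The "bookkeeping on the dual side" that you flag as the main obstacle is precisely what the paper handles in Lemma~\ref{lem:localizer-mapping} and Lemma~\ref{lem:finite-dim_cokernel} (constructing a sink microlocalizer $\tilde{Q}$ with the right ellipticity and operator wavefront set, and checking that elements of the dual space land in the correct $H^{1-s,\ell_0',\ell_+'}_{\tilde Q}$ space), so your identification of the nontrivial point is accurate even though you don't carry it out.
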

\begin{proof}
  This proof is covered in detail in Section \ref{sec:tsc prop construction} below in
  the more general case of nonzero potential $V$.  In brief summary,
  the fact that the operator is bounded follows immediately from the
  definitions of the spaces.  Moreover, the fact that the mapping has
  closed range and finite dimensional kernel follows from the estimate \eqref{eq:global fredholm
    estimate} by a standard argument using the compactness of $H^{s,
    \ell_{0}, \ell_{+}}_{Q_{\sources}}$ in $\Sobres$ for
  sufficiently large $M,N$.  The cokernel is then identified with
  the kernel of $P_{0}^{*}$ with domain $(\cY^{s,\ell_{0},
    \ell_{+}})'$, whose elements have above threshold decay at
  $\Rad_{\sinks}$.  Similar estimates apply to those distributions,
  giving the finite dimensionality by the same compactness argument.  
\end{proof}

\subsection{The Feynman propagator and the wavefront set
  condition}\label{sec:feynman propagator scat}

The fact that the mapping~\eqref{eq:Fred map scat} is in fact
invertible (seen below in Section~\ref{sec:tsc prop construction})
provides the basis of our definition of the Feynman propagator.
However, to uniquely define the Feynman
propagator, we must ensure that the solutions $u$ to $P_{0} u = f$
obtained by applying the inverse mapping in \eqref{eq:Fred map scat}
do not depend on any choices made in the construction.  Specifically,
a given $f$ lies in many spaces $\cY^{s, \ell_{0}, \ell_{+}} = H^{s, \ell_{0},
  \ell_{+}}_{Q_{\sources}'}$ for different values of $s, \ell_{0},
\ell_{+}$ and different choices of cutoff $Q_{\sources}'$, and we must
show that the inverse mapping gives the same distribution independent
of these choices and the choice of $Q_{\sources}$ in the definition of
$\cX^{s, \ell_{0}, \ell_{+}}$.  This, and more, is summarized in the
following theorem.
\begin{theorem}\label{thm:Feynman propagator scattering}
Under the assumptions of Theorem \ref{thm:scattering Fredholm}, the mapping
\eqref{eq:Fred map scat} is invertible.

The inverse $\Pzeroinv$ is defined independently of the
parameters $s, \ell_{0}, \ell_{+}$ and the cutoffs $Q_{\sources},
Q_{\sources}'$ in the following sense.  Let $u_{1}, u_{2} \in
\schwartz'$ satisfy
\begin{equation*}
    u_{i} \in H^{s_{i}, \ell_{0,i}, \ell_{+,i}}_{Q_{\sources, i}} \mbox{
    for some } s_{i}, \ell_{0,i}, \ell_{+,i} \in \mathbb{R},
  \ell_{+,i} > -1/2,
\end{equation*}
with $\Rad_{\sources} \subset \Ell(Q_{\sources, i})$.  Then
\begin{equation*}
  P_{0} u_{1} = P_{0} u_{2} \implies u_{1} = u_{2}.
\end{equation*}

In particular, the mapping
\begin{equation}
  \label{eq:sc prop compact supp}
  \Pzeroinv \colon \CcmI(X) \lra \CmI(X)
\end{equation}
is well defined and satisfies the following Hadamard property; for $f 
\in \CcmI$,
\begin{equation}
  \label{eq:sc Hadamard}
        \WF(\Pzeroinv f) \subset \WF(f) \cup \bigcup_{s \geq 0} \Phi_s \left( \WF(f) \cap \Char(P_0) \right) \cup \Rad_{\sinks}\,.
      \end{equation}
where $\Phi_{s}$ is the Hamiltonian flow on $\partial \Tsco^{*} X$.
    \end{theorem}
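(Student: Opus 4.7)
I would prove the theorem in four stages: invertibility of the Fredholm map \eqref{eq:Fred map scat}, independence of the inverse from the parameters and cutoffs, extension to $\CcmI$, and the microlocal Hadamard wavefront inclusion. Since Theorem~\ref{thm:scattering Fredholm} already delivers Fredholmness, for invertibility I need only show that kernel and cokernel are trivial. Suppose $u \in \cX^{s, \ell_{0}, \ell_{+}}$ satisfies $P_{0} u = 0$. Because $P_{0} u = 0$ lies in every weighted scattering Sobolev space, I would iteratively apply the above-threshold radial source estimate together with real principal type propagation of singularities and elliptic regularity off the characteristic set to bootstrap $u$ to arbitrary Sobolev regularity and decay, concluding that $u$ is Schwartz. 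Because $V \equiv 0$, the limiting spatial Hamiltonian $-\Delta + m^{2}$ is strictly positive, so a standard energy estimate for Klein--Gordon forces a Schwartz solution to vanish: finite energy plus Schwartz decay as $t \to \pm \infty$ gives vanishing energy on every time slice, whence $u \equiv 0$. The cokernel corresponds to the adjoint problem, which has above-threshold decay at $\Rad_{\sinks}$ in place of $\Rad_{\sources}$; reversing the direction of propagation, the identical bootstrap-then-energy argument gives triviality.

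For independence and extension to $\CcmI$, given $u_{1}, u_{2}$ as in the uniqueness statement, I would set $w = u_{1} - u_{2}$ and choose common parameters $s \leq \min_{i} s_{i}$, $\ell_{0} = \min_{i} \ell_{0,i}$, $\ell_{+} = \min_{i} \ell_{+,i}$, together with a microlocalizer $Q_{\sources}$ satisfying $\Rad_{\sources} \subset \Ell(Q_{\sources})$ and $\WF'(Q_{\sources}) \subset \Ell(Q_{\sources, 1}) \cap \Ell(Q_{\sources, 2})$. Microlocal elliptic regularity then gives $Q_{\sources} u_{i} \in \Sobsc^{s, \ell_{+}}$, hence $w \in H_{Q_{\sources}}^{s, \ell_{0}, \ell_{+}}$, and since $P_{0} w = 0 \in \cY^{s-1, \ell_{0}+1, \ell_{+}+1}$ trivially, invertibility forces $w = 0$. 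For $f \in \CcmI$, compact support implies $f$ vanishes near the base boundary and has finite distributional order, so $f \in \cY^{s-1, \ell_{0}+1, \ell_{+}+1}$ for any admissible choice of parameters and cutoff $Q_{\sources}'$, and $\Pzeroinv f$ is then the unique preimage, independent of all choices.

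For the Hadamard property, I would verify the inclusion at each type of point $q \in \partial \Tsco^{*} X$. Off $\Char(P_{0})$, microlocal elliptic regularity gives $\WF(u) \cap \Ell(P_{0}) \subset \WF(f)$. At $\Rad_{\sources}$, the above-threshold decay built into $\cX$ together with the smoothness of $f$ near the base boundary lets the same bootstrap used in invertibility give $\WF(u) \cap \Rad_{\sources} = \varnothing$. For $q \in \Char(P_{0}) \setminus \Rad$, I would trace the bicharacteristic through $q$ backward: by Theorem~\ref{thm:global flow} this trajectory originates at $\Rad_{\sources}$, where $u$ is smooth, so real principal type propagation of singularities along $\Char(P_{0}) \setminus \Rad$ forces either $q \notin \WF(u)$ or the backward trajectory to meet $\WF(f) \cap \Char(P_{0})$, placing $q \in \bigcup_{s \geq 0} \Phi_{s}(\WF(f) \cap \Char(P_{0}))$. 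Points of $\Rad_{\sinks}$ are absorbed into the right-hand side directly.

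The main obstacle will be the kernel triviality in the invertibility step, specifically the passage from ``$u$ is Schwartz'' to ``$u \equiv 0$''. The bootstrap into the Schwartz class is routine given the radial and propagation estimates from \cite{BDGR}, but the final vanishing relies crucially on the positivity of the limiting Hamiltonian $-\Delta + m^{2}$; this is the only step where the assumption $V \equiv 0$ enters essentially, and it is precisely what will fail in the presence of bound states, necessitating the separate analysis of Section~\ref{sec:bound states}. The remaining steps, while requiring care with cutoff choices and the direction of propagation at the radial set, are formal consequences of invertibility plus the standard microlocal machinery.
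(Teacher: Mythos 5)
The main invertibility step contains a genuine gap. You claim that, for $u \in \ker P_0$, iterating the above-threshold source estimate, real principal type propagation, and elliptic regularity bootstraps $u$ to the Schwartz class. That is not what those estimates give. The above-threshold estimate does yield arbitrary decay near $\Rad_{\sources}$, and propagation along the flow gives arbitrary decay on $\Char(P_0)\setminus\Rad_{\sinks}$, but the only estimate available at $\Rad_{\sinks}$ that takes input propagated from elsewhere is the \emph{below}-threshold radial estimate, which caps the achievable weight at $\ell < -1/2$. The above-threshold estimate at a sink presupposes a priori above-threshold decay there, which a generic element of $\cX^{s,\ell_0,\ell_+}$ does not have. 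So the bootstrap stalls at $u \in \Sobsc^{\infty, -1/2-\epsilon}$ near $\Rad_{\sinks}$ --- exactly the conclusion of the paper's Lemma~\ref{lem:global_regularity} --- and does not reach Schwartz. The missing ingredient is the boundary-pairing/commutator argument in the style of Vasy~\cite{Vasy2000}*{Proposition~17.8}, carried out in the paper as Proposition~\ref{prop:smooth_kernel}: one pairs $[P_0,\chi_\epsilon]u$ with $u$, exploits (near-)self-adjointness, and uses the sign of the commutator's symbol at the sinks to deduce above-threshold membership there, after which the above-threshold estimate at $\Rad_{\sinks}$ becomes applicable. The same gap recurs in your cokernel argument.

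A secondary, non-fatal divergence: once above-threshold decay at the full radial set is obtained, the paper does not invoke an energy conservation argument; it observes that $u$ now lies in a causal Sobolev space $\cX^{s,\ellvar}$ and applies the already-established invertibility of the causal propagator from~\cite{BDGR}*{Theorem~8.2}. Your conserved-energy route would in fact work for $P_0 = \square_{g}-m^2$ with $m>0$ (positive-definite energy plus rapid decay on time slices forces vanishing), so it is a legitimate alternative in the free scattering setting, though it will not generalize to Section~\ref{sec:bound states}. Your arguments for independence of choices, for extension to $\CcmI$, and for the Hadamard wavefront inclusion are in line with the paper.
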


\section{\texorpdfstring{$\tsc$-calculus}{3sc-calculus} for the
  Klein--Gordon operator}\label{sec:tsc calculus}

\subsection{Basics of the \texorpdfstring{$\tsc$-calculus}{3sc-calculus}}

In this section, we recall the basics of the $\tsc$-calculus first introduced by Vasy~\cite{Vasy2000} with adaptions to treat the Klein--Gordon equation in \cite{BDGR}.
On the spacetime compactification $X = \overline{\RR^{n+1}}$ discussed
in Section~\ref{sec:hamiltonian flow}, we set
$\poles = \set{\NP, \SP}$, where 
\begin{align*}
    \NP = \pa X \cap \overline{\set{z = 0}} \cap \overline{\set{t > 0}}\,,\quad 
    \SP = \pa X \cap \overline{\set{z = 0}} \cap \overline{\set{t < 0}}\,.
\end{align*}
Thus $\NP \in \iota^+$ and $\SP \in \iota^-$, and due to their
placement we refer to them as the north and south poles, respectively.

To use Vasy's three-body framework, we consider the blown-up space $[X; \poles]$ with the canonical blow-down map
\begin{align*}
    \beta_{\poles} : [X; C] \to X.
\end{align*}
The space $[X; \poles]$ is a manifold with corners with three boundary hypersurfaces,
\begin{align*}
    \ff_+ \coloneqq \beta_{\poles}^*(\NP)\,,\quad
    \ff_- \coloneqq \beta_{\poles}^*(\SP)\,,\quad
    \mf \coloneqq \beta_{\poles}^*(\pa X)\,,
\end{align*}
depicted in Figure \ref{fig:fiber_np}.  
The resolution $[X;\poles]$ has the important property that, while the potential $V$ defined in \eqref{eq:simple_potential} is not smooth on
$X$, it is smooth on $[X; \poles]$.

\begin{figure}
    \begin{minipage}{0.3\textwidth}
    \centering
    \begin{tikzpicture}[scale=2]
        \draw (1,0) arc (0:80:1cm);
        \draw (1,0) arc (0:-80:1cm);
        \draw (-1,0) arc (180:260:1cm);
        \draw (-1,0) arc (180:100:1cm);
        \draw[thick] (0.182,0.985) arc (0:-180:0.182cm); 
        \draw[thick] (0.182,-0.985) arc (0:180:0.182cm); 
        \draw (0,0.65) node {$\ff_+$};
        \draw (0,-0.65) node {$\ff_-$};
    \end{tikzpicture}
    \end{minipage}%
    \begin{minipage}{0.1\textwidth}
        \centering
        $\xrightarrow{\beta_C}$
    \end{minipage}%
    \begin{minipage}{0.3\textwidth}
    \centering
    \begin{tikzpicture}[scale=2]
        \draw (0,0) circle (1cm);
        \filldraw (0,1) circle (0.5pt) node [below] {$\NP$};
        \filldraw (0,-1) circle (0.5pt) node [above] {$\SP$};
        \draw (0.8, 0.7) node [right] {$\abs{y} = 1$};
        \draw[dotted] (0.707, 0.707) -- (-0.707, -0.707);
        \draw[dotted] (0.707, -0.707) -- (-0.707, 0.707);
    \end{tikzpicture}
    \end{minipage}%
    \caption{The blow-down map $\beta_C : [X;C] \to X$.}
    \label{fig:fiber_np}
\end{figure}

Our analysis below is based on the realization of $P_V$ as a
$\tsc$-operator.  To review, the space of differential operators in
the $\tsc$-calculus is given by
\begin{align*}
    \Difftsc^m(X) \coloneqq \Diffsc^m(X) \otimes_{\CI(X)} \CI([X; \poles])\,.
\end{align*}
Thus, a differential operator $L$ in $\Difftsc^m(X)$ is given by
\begin{align*}
    L = \sum_{\abs{\alpha} + k \leq m} a_{k,\alpha} D_t^k D_z^\alpha\,,
\end{align*}
where $a_{k,\alpha} \in \CI([X;\poles])$.
It is easy to verify that $P_V \in \Difftsc^2(X)$ if $V \in S^{-1}(\RR^n_{z})$.
More generally, we define weighted differential operators as
\begin{align*}
    \Difftsc^{m,r}(X) \coloneqq \ang{t,z}^r \Difftsc^m(X)\,.
\end{align*}

The compactified $\tsc$-cotangent bundle is defined to be the pullback bundle
\begin{align*}
    \Ttsco^* [X; \poles] \coloneqq \beta_{\poles}^* \Tsco^* X = [X; C]
  \times \overline{\mathbb{R}^{n + 1}_{\tau,
      \zeta}}\,.
\end{align*}
This is a manifold with corners with \emph{four} boundary hypersurfaces,
\begin{align*}
    \Ttsco^*_{\ff_\pm} [X; \poles]\,, \quad 
    \Ttsco^*_{\mf} [X; \poles]\,, \quad
    \Stsc^*[X; \poles]\,,
\end{align*}
with the latter being the fiber boundary
$[X; C] \times \partial \overline{\mathbb{R}^{n + 1}_{\tau, \zeta}}$,
which can be identified with the sphere bundle of
$\Ttsc^* [X; \poles]$.  The corresponding boundary defining functions are denoted
by
\begin{align*}
    \rho_{\ff_{\pm}}\,, \quad 
    \rho_{\mf}\,, \quad 
    \rho_{\fib}\,.
\end{align*}
Moreover, we define the functions
\begin{align*}
    \rho_{\ff} \coloneqq \rho_{\ff_+} \rho_{\ff_-}\,, \quad
    \rho_{\infty} \coloneqq \rho_{\ff} \rho_{\mf}\,.
\end{align*}
Here $\rho_{\infty} = \beta_{C}^{*} \rho_{\base}$ defines the boundary
of $[X ; C]$.

The space of classical $\tsc$-symbols is
\begin{align*}
    \Stsc^{m,r}(X; \poles) \coloneqq \rho_{\fib}^{-m} \rho_{\infty}^{-r} \CI(\Ttsco^* [X; \poles])\,.
\end{align*}
and the $\tsc$-pseudodifferential operators are quantizations of such symbols,
\begin{align*}
    \Psitsc^{m,r} \coloneqq \Op_L(\Stsc^{m,r})\,.
\end{align*}

Let $A \in \Psitsc^{m,r}$ and $s, \ell \in \RR$, then
(\cite{Vasy2000}*{Cor.~8.2})
\begin{align*}
    A : \Sobsc^{m+s, r+\ell}(X) \to \Sobsc^{s,\ell}(X)
\end{align*}
is a bounded mapping.  Below, we recall additional facts about
$\tsc$-pseudodifferential operators necessary for the statements and
proofs of our estimates.  Our first goal, however, is to recall the principal
symbol construction, which differs from that of the scattering
calculus as it includes the indicial operator.

If $A = \Op_L(a) \in \Psitsc^{0,0}$, then we define the symbols over $\mf$ and fiber infinity as
\begin{align*}
    \mfsymbz(A) &= a|_{\Ttsco^*_{\mf} [X;\poles]} \in \CI(\Ttsco^*_{\mf} [X;\poles])
    \intertext{and}
    \fibsymbz(A) &= a|_{\Stsc^* [X;\poles]} \in \CI(\Stsc^* [X;\poles])\,.
\end{align*}
In particular, away from $\ff_{\pm}$, the symbol of a $\tsc$ operator
is equal to its symbol in the standard scattering sense.

The indicial operator $\ffsymbz$ is a family of operators parametrized by
the vector bundle
\begin{align*}
    W^\perp \coloneqq \operatorname{span}_{\RR} \left( \frac{dx}{x^{2}} \right) \subset \Tsc_{\poles}^* X
\end{align*}
and we have the orthogonal projection
\begin{align*}
    \pi^\perp : \Tsc_{\poles}^* X \to W^\perp\,.
\end{align*}
The space $W^\perp$ is simply two copies of the line $\RR_{\tau}$
corresponding to the forms  $\tau dt$ over $\NP$ and $\SP$ and we write $W^\perp_{\pm}$ for the restriction of $W^\perp$ to $\NP$ and $\SP$, respectively.
We  also need the compactification of $W^\perp$,
\begin{align*}
    \Wperpo \cong \set{\pm \infty} \times \overline{\RR}\,.
\end{align*}
We describe below that the indicial operator is a semiclassical scattering
operator with semiclassical parameter $1/|\tau|$ as $\tau \to \pm \infty$, and its
semiclassical principal symbol is equal to its total symbol's value at
$\tau = \pm \infty$.

Given $A \in \Psitsc^{m,0}$, we now recall the indicial operator $\ffsymbz(A)$ (cf. \cite{Vasy2000}*{Chap.~6}, \cite{BDGR}*{Sect.~4.2}).
As the constructions at $\ff_{\pm}$ are identical, we work at $\ff_+$ and
write $\ff = \ff_+$.  We reintroduce the $\pm$ notation when
it is useful below. Each $A\in \Psitsc^{m,0}$ defines an operator $A_{\ff} \in \Psisc^{m,0}(\ff)$ by
\begin{align*}
    A_{\ff}(f) = (Au)_{\ff}\,,
\end{align*}
where $u \in \CI(X)$ is any extension of $f$ in the sense that $u|_{\ff} = f$.
The indicial operator on $\ff$ is then defined by 
\begin{align*}
    \ffsymbz(A)(\tau_0) \coloneqq \left( e^{i\tau_0/x} A e^{-i\tau_0/x}\right)_{\ff}\,.
\end{align*}
As an example, we recall that if $P_V = D_t^2 - (\Delta + m^2 + V)$ and $V_0 \in S^{-1}(\RR^n_z)$ with $V(t, z) - V_0(z) \in \Psitsc^{0, -1}$, then
\begin{align*}
    \ffsymbz(P_V)(\tau) = \tau^2 - H_{V_0}\,,
\end{align*}
so $\ffsymbz(P_V)$ is the partial Fourier transform of $P_{V_0}$ in
the $t$ variable.

The principal symbol of an operator $A \in \Psitsc^{m,r}$ is described
in detail in our previous paper \cite{BDGR}*{Sect.\ 4}.  It consists of
two ``local'' components, which are rescaled restrictions of the symbol of $A$
to the boundary components $\Stsc^* [X;\poles]$ and $\Ttsco^*_{\mf}
[X;\poles]$, respectively.  The former is the restriction to fiber
infinity, and its restriction to the interior  $X^{\circ} =
\mathbb{R}^{n + 1}$ is the standard principal symbol.  The rescaled
restriction to $\Ttsco^*_{\mf} [X;\poles]$ corresponds with the
spacetime infinity scattering principal symbol, i.e., it is the $\sc$,
as opposed to $\tsc$, symbol, which is well defined on $\mf^{\circ}$.
We use $\rho_{\fib} = \ang{\tau, \zeta}^{-1}$ and $\rho_{\base} = x =
1/t$:
\begin{align*}
    \mfsymb{m,r}(A) &\coloneqq \ang{\tau, \zeta}^{-m} x^r a|_{\Ttsco^*_{\mf} [X;\poles]}\,,\\
    \fibsymb{m,r}(A) &\coloneqq \ang{\tau, \zeta}^{-m} x^r a|_{\Stsc^* [X;\poles]}\,,\\
    \ffsymbpm{r}(A) &\coloneqq \ffsymbzpm(x^r A)\,.
\end{align*}
The $\tsc$-principal symbol consists of these four components,
and we denote the principal symbol mapping by
\begin{align*}
    \prinsymb{m,r} : \Psitsc^{m,r} &\to \CI(\Ttsc^*_{\mf} [X;\poles]) \times \CI(\Stsc^* [X;\poles]) \times \CI(\RR_\tau; \Psisc^{m,0}(\ff_{\pm}))\\
    A &\mapsto \left( \fibsymb{m,r}(A), \mfsymb{m,r}(A), \ffsymbpm{r}(A) \right)\,.
\end{align*}
The indicial operator is equal to the quantization of the restriction
of the symbol of $A$ to the slice $\tau$.  Specifically, if $a \in
\Stsc^{m,r}$ satisfies, at $\ff = \ff_+$,
\begin{equation*}
a_{\ff} \coloneqq (x^{r} a)   \rvert_{\Ttsco_{\ff}^{*} [X;C]},
\end{equation*}
then, \cite{BDGR}*{Lemma~4.4}
\begin{equation}
  \label{eq:2}
  \ffsymb{r}(A)(\tau) = \Op_{L,z}(a_{\ff}(z, \tau, \zeta)).
\end{equation}
Thus, the principal symbol satisfies the obvious matching conditions,
namely that the restriction of each component of the symbol the
boundary of its domain matches the restriction of the other
components of the symbol there.  Matching is the only condition
required for quantization.  This is summarized in the following proposition.

\begin{proposition}[\cite{BDGR}*{Proposition~4.6 and Proposition~4.8}]
    The kernel of the principal symbol mapping is $\Psitsc^{m-1, r-1}$ and the image is the set of those $(q_1, q_2, \set{Q_\tau})$
    such that if $q_0$ denotes the left reduction of $Q_\tau$, we have $\ang{\tau, \zeta}^{-m} q_{0} \in \CI(\ff \times \overline{\RR_{\tau,\zeta}^{n+1}})$ and the matching conditions
    \begin{gather*}
        q_1|_{\Stsc^*_{\mf}[X;\poles]} = q_2|_{\Stsc^*_{\mf}[X;\poles]}\,, \quad
        \ang{\tau, \zeta}^{-m} q_0|_{\Stsc^*_{\ff}[X;\poles]} = q_1|_{\Stsc^*_{\ff}[X;\poles]}\,,\\
        \ang{\tau, \zeta}^{-m} q_0|_{\Ttsco^*_{\ff \cap \mf}[X;\poles]} = q_1|_{\Ttsco^*_{\ff \cap \mf}[X;\poles]}
    \end{gather*}
    hold.

    Moreover, $\prinsymbz$ is multiplicative in the sense that if $A \in \Psitsc^{m_1, r_1}$ and $B \in \Psitsc^{m_2, r_2}$, then $A B \in \Psitsc^{m_1 + m_2, r_1 + r_2}$ and
    \begin{align*}
        \prinsymb{m_1 + m_2, r_1 + r_2}(AB) = \prinsymb{m_1, r_1}(A) \prinsymb{m_2, r_2}(B)\,.
    \end{align*}
\end{proposition}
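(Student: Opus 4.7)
The plan is to prove the three assertions by identifying each component of the principal symbol with a boundary restriction of the rescaled total symbol $\tilde a \coloneqq \rho_{\fib}^m \rho_{\infty}^r a \in \CI(\Ttsco^*[X;\poles])$, and to deduce multiplicativity from the standard composition formula for left-reduced symbols together with \eqref{eq:2}.

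For the kernel, suppose $A = \Op_L(a) \in \Psitsc^{m,r}$ has $\prinsymb{m,r}(A) = 0$, so that each of $\fibsymb{m,r}(A)$, $\mfsymb{m,r}(A)$, and $\ffsymbpm{r}(A)$ vanishes. By the very definitions of the fiber and $\mf$ components, and by the identification \eqref{eq:2} of the indicial operator as the $z$-quantization of $a_\ff$, this is equivalent to the statement that $\tilde a$ vanishes on each of the four boundary hypersurfaces $\Stsc^*[X;\poles]$, $\Ttsco^*_{\mf}[X;\poles]$, and $\Ttsco^*_{\ff_{\pm}}[X;\poles]$. Since $\tilde a$ is smooth on a manifold with corners, vanishing at each boundary hypersurface forces $\tilde a$ to lie in $\rho_{\fib}\rho_{\mf}\rho_{\ff_+}\rho_{\ff_-}\CI(\Ttsco^*[X;\poles]) = \rho_{\fib}\rho_{\infty}\CI$. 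Undoing the rescaling gives $a \in \Stsc^{m-1,r-1}$, hence $A \in \Psitsc^{m-1,r-1}$; the reverse inclusion is immediate from the same identification.

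For the image, necessity of the matching conditions is clear: at any corner where two boundary faces of $\Ttsco^*[X;\poles]$ meet, the two restrictions of $\tilde a$ must coincide, and \eqref{eq:2} translates the relevant indicial data at $\ff_\pm$ into a symbol restriction. For surjectivity, given $(q_1,q_2,\{Q_\tau\})$ satisfying the matching conditions, I would select $q_0$ left-reducing to $Q_\tau$ and then construct a single smooth function $\tilde a$ on $\Ttsco^*[X;\poles]$ whose four boundary restrictions are the prescribed data: choose a partition of unity subordinate to collar neighborhoods of the four hypersurfaces, extend each datum into its collar using a chosen retraction, and glue. The matching conditions ensure that the local extensions agree on overlaps so the result is globally smooth; then $A \coloneqq \Op_L(\rho_{\fib}^{-m}\rho_{\infty}^{-r}\tilde a)$ realizes the prescribed principal symbol.

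For multiplicativity, I would invoke the left-reduced composition formula in the $\tsc$-calculus (cf.\ \cite{Vasy2000}), which gives $\Op_L(a)\Op_L(b) = \Op_L(a\#b)$ with leading part $ab$ and remainder in $\Stsc^{m_1+m_2-1,r_1+r_2-1}$. Restricting $\tilde a\tilde b$ to the fiber and $\mf$ faces yields multiplicativity of the corresponding components, and combining \eqref{eq:2} with standard composition in $\Psisc^{*,0}(\ff_\pm)$ gives $\ffsymbzpm(AB)(\tau) = \ffsymbzpm(A)(\tau)\,\ffsymbzpm(B)(\tau)$. The main obstacle lies in the surjectivity step: one must verify that compatibility of the data at the pairwise corners of $\Ttsco^*[X;\poles]$ — the matching conditions as stated — is strong enough to produce a globally smooth extension at the triple corners (where $\Stsc^*$, $\Ttsco^*_{\mf}$, and $\Ttsco^*_{\ff_\pm}$ meet). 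This is where the built-in smoothness of each $q_j$ as a function on its own boundary hypersurface (itself a manifold with corners) does the work, guaranteeing the iterated compatibilities needed for the partition-of-unity gluing to yield a smooth function.
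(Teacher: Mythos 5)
The paper does not prove this proposition; it is quoted verbatim from \cite{BDGR}*{Propositions 4.6 and 4.8}, so there is no in-text proof to compare against. Evaluated on its own terms, your sketch is a correct high-level account of the standard short-exact-sequence argument: identify the rescaled total symbol $\tilde a = \rho_{\fib}^m \rho_\infty^r a$ with a smooth function on $\Ttsco^*[X;\poles]$; observe that vanishing of each component of $\prinsymbz(A)$ corresponds to vanishing of $\tilde a$ on the corresponding boundary hypersurface (using, at $\ff_\pm$, that $a_{\ff} = \rho_{\fib}^{-m}\tilde a\rvert_{\Ttsco^*_{\ff}}$ up to nonvanishing factors); conclude $\tilde a \in \rho_{\fib}\rho_{\mf}\rho_{\ff_+}\rho_{\ff_-}\CI$; and for surjectivity, glue boundary data using collar retractions and a partition of unity, with the matching conditions at the pairwise corners supplying the compatibility. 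Your worry about the codimension-three corner $\Stsc^*_{\ff\cap\mf}$ is appropriate but handled exactly as you say, since each datum is itself smooth on a manifold with corners.

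One place where you take a noticeably more roundabout route is multiplicativity of the indicial family. Arguing via \eqref{eq:2} and the left-reduced composition formula commits you to verifying that the full $\tsc$-composition $a\#b$, which involves $(\partial_\tau)^k(D_t)^k$ cross-terms, restricts on $\Ttsco^*_{\ff}$ to the purely spatial $\#$-product of $a_\ff$ and $b_\ff$ in $\Psisc(\ff)$; this is true but is essentially what one is trying to prove. The cleaner and more standard route, consistent with how the paper actually defines $\ffsymbz$, is structural: the indicial operator is $\ffsymbz(A)(\tau_0) = \bigl(e^{i\tau_0/x}Ae^{-i\tau_0/x}\bigr)_{\ff}$, and both conjugation by $e^{i\tau_0/x}$ and the restriction-to-$\ff$ map $L\mapsto L_\ff$ (defined by $L_\ff f = (Lu)_\ff$ for any extension $u$ of $f$) are manifestly algebra homomorphisms, so $\ffsymbz(AB)(\tau)=\ffsymbz(A)(\tau)\,\ffsymbz(B)(\tau)$ falls out immediately. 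Multiplicativity of $\fibsymbz$ and $\mfsymbz$ then follows from the usual symbol composition as you say. I would recommend restructuring the multiplicativity part along these lines; as written, the \eqref{eq:2}-based step is the one point in your sketch that would require substantially more work to make rigorous.
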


\subsection{Elliptic and wavefront sets}

The appropriate notions of operator wavefront set, elliptic set, and
characteristic set, are influenced by the spatially global nature of
the indicial operator.  In particular, given $A \in \Psitsc^{m,r}$ and
$\tau \in W^{\perp}$, we have: (1) $\tau_{0} \not \in \WFtsc(A)$ if,
for some $\epsilon > 0$,
the total symbol of $A$ vanishes to all orders at the entire
lens of slices $\ff \times \{ (\tau, \zeta) : |\tau - \tau_{0}| < \epsilon \}
\subset \Ttsco^{*}_{\ff}[X;C]$, and (2) $\tau_{0} \in \Elltsc(A)$ if
$\ffsymb{r}(A)(\tau_{0})$ is an invertible operator.

Moreover, as $\tau \to \pm \infty$, $\ffsymb{r}(A)(\tau_{0})$ is
semiclassical in $h = 1/|\tau|$, and its principal symbol at
$\tau \to \pm \infty$ is the restriction of the total symbol to the ``hemisphere''
\begin{equation*}
  UH_{\pm} = \Stsc^{*}_{\ff}[X;C] \cap \cl(\{ \pm \tau > 0 \}).
\end{equation*}
In particular, as $\tau \to + \infty$,
\begin{equation}
  \label{eq:5}
  \sigma_{\scl, h = 1/\tau}(\ffsymb{r}(A)(1/h)) = \ang{\tau, \zeta}^{-m} a_{\ff} \rvert_{UH_{+}}.
\end{equation}

At a basic level, the principal symbol is a function on a
compactification of the compressed cotangent bundle
$\Tdot^* X  = (\Tsc^{*}X \setminus \Tsc^{*}_{C} X ) \cup W^{\perp}$,
specifically on 
    \begin{equation*}
    \Tdoto^* X = (\Tsco^{*}X \setminus \Tsco^{*}_{C} X ) \cup \Wperpo.
  \end{equation*}
The domain of principal symbol, $\Ctscd$, is a subset of the boundary faces of $\Tdoto^* X$:
\begin{align*}
    \Ctscd \coloneqq \Ssc_{X \setminus \poles}^* X \cup  \Tsc^*_{\pa X \setminus \poles} X \cup \Wperpo.
\end{align*}
To describe the operator wavefront set, we define a mapping
$\gamma_{\tsc}$, which associates to each point in the domain of the
principal symbol the appropriate subset of phase space which determines
the symbolic behavior:
\begin{align}\label{eq:gamma tsc easy}
\gamma_{\tsc} : \Ctscd \lra
\mathcal{P}(\pa \Ttsco^* [X;C])
\end{align}
as
\begin{align*}
    \gamma_{\tsc}(p) &= \{p\}&{} &\mbox{ for } p \in  \Ssc^{*}_{X \setminus \poles} X \cup
  \Tsco^{*}_{\pa X \setminus \poles} X\,, \\
    \gamma_{\tsc}(\tau) &= \beta_{C}^{-1} (\pi^\perp)^{-1}\{\tau\}&{} &\mbox{ for } \tau \in W^\perp\,,\\
    \gamma_{\tsc}(\pm \infty) &= UH_{\pm}&{} &\mbox{ for } \pm \infty \in \partial \Wperpo\,.
\end{align*}
In particular, $\gamma_{\tsc}$ maps $\tau_0 \in W_+^\perp$ to the
entire slice $\ff_+ \times \overline{\set{(\tau_0, \zeta) \colon \zeta
      \in \RR^n}}$.
 By abuse of notation, for a set $S \subset \Ctscd$ we write
\begin{align}\label{eq:gamma_tsc}
    \gamma_{\tsc}(S) \coloneqq \bigcup_{p \in S} \gamma_{\tsc}(p)\,.
\end{align}
We used above that $\Ssc^{*}_{X \setminus \poles} X \cup
  \Tsco^{*}_{\pa X \setminus \poles} X$ is naturally identified with $\Stsc^{*}_{[X;C] \setminus \ff} [X;C] \cup
  \Ttsco^{*}_{\pa [X;C] \setminus \ff} [X;C]$ via the blow down map.

For a symbol $a \in \Stsc^{m,r}$, we define the essential support of $a$,
$\esssupptsc(a) \subset \pa \Ttsco^* [X;C]$, by declaring
$p \in \esssupptsc(a)^c$ if and only if there exists $U \subset
\Ttsco^* [X;C]$ open and $\chi \in \CcI(\Ttsco^* [X;C])$ such that $p
\in U$, $\chi|_U \equiv 1$ and $\chi a \in \Stsc^{-\infty, -\infty}$.

\begin{definition}\label{def:WFtsc}
    Let $A = \Op_L(a) \in \Psitsc^{m,r}(X)$.
    The operator wavefront set
    \begin{align*}
        \WFtsc'(A) \subset \Ctscd
    \end{align*}
is defined as
    follows: a point $p \in \Ctscd$ is \textit{not }in the wavefront set,
    \begin{align*}
        p \in \WFtsc'(A)^c \text{ if and only if }
        \gamma_{\tsc}(p) \cap \esssupptsc(a) = \varnothing\,.
    \end{align*}

    Moreover, we define
    \begin{align*}
        \WF_{\fib}'(A) &\coloneqq \WFtsc'(A) \cap \Ssc^{*}_{X \setminus \poles} X \,,\\
        \WF_{\mf}'(A) &\coloneqq \WFtsc'(A) \cap \Tsco^*_{\pa X \setminus \poles}X \,, \\
        \WF_{\ff}'(A) &\coloneqq \WFtsc'(A) \cap \Wperpo\,.
    \end{align*}
\end{definition}
We can write the complements of each of the components as
\begin{align*}
    \WF_{\fib}'(A)^c &= \{ \alpha \in \Ssc^{*}_{X \setminus \poles} X\colon \exists U \subset \Ssc^{*}_{X \setminus \poles} X \text{ open such that $\alpha \in U$} \\
    &\phantom{= \{} \text{and $a(A)$ vanishes to infinite order on $\overline{U}$}\}\,,\\
    \WF_{\mf}'(A)^c &= \{ \alpha \in \Tsco^*_{\pa X \setminus \poles}X \colon \exists U \subset \Tsco^*_{\pa X \setminus \poles}X \text{ open such that $\alpha \in U$} \\
    &\phantom{= \{} \text{and $a(A)$ vanishes to infinite order on $\overline{U}$}\}\,,\\
        \WF_{\ff}'(A)^c &=
        \{ \tau \in W^\perp \colon \exists\ \epsilon > 0 \text{ such that $a(A)$
                             vanishes to } \\
  &\phantom{= \{} \text{ infinite order on }
  \beta_{C}^{-1}(\pi^\perp)^{-1}[\tau - \epsilon, \tau + \epsilon]\}\,  \\
  &\phantom{= \{} \cup \{ \pm \infty :  \exists  \text{ open $U
    \subset \partial \Ttsco^*[X;C]$ such
    that } UH_\pm  \subset U \\
  & \phantom{= \{} \text{ and $a(A)$ vanishes to infinite order on }
    \overline{U} \}.
\end{align*}

\begin{lemma}\label{lem:WFtsc_operator}
    Let $A \in \Psitsc^{m_1, r_1}$, $B \in \Psitsc^{m_2, r_2}$, then
    \begin{align*}
        \WFtsc'(AB) \subset \WFtsc'(A) \cap \WFtsc'(B)\,,\\
        \WFtsc'(A) = \varnothing \Rightarrow A \in \Psitsc^{-\infty, -\infty}\,.
    \end{align*}
\end{lemma}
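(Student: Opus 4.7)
The plan is to reduce both inclusions to two ingredients: the symbol composition formula in the $\tsc$-calculus, and a covering property of the map $\gamma_{\tsc}$ introduced in~\eqref{eq:gamma tsc easy}.

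For $\WFtsc'(AB) \subset \WFtsc'(A) \cap \WFtsc'(B)$, write $A = \Op_L(a)$, $B = \Op_L(b)$, and $AB = \Op_L(c)$. The composition formula for the $\tsc$-calculus yields an asymptotic expansion $c \sim \sum_\alpha \frac{1}{\alpha!} (\pa_{\tau,\zeta}^\alpha a)(D_{t,z}^\alpha b)$ modulo $\Stsc^{-\infty,-\infty}$; each term is supported in $\esssupptsc(a) \cap \esssupptsc(b)$, so
\[
  \esssupptsc(c) \subset \esssupptsc(a) \cap \esssupptsc(b).
\]
By Definition~\ref{def:WFtsc}, whenever $\gamma_{\tsc}(p)$ is disjoint from $\esssupptsc(a)$ or from $\esssupptsc(b)$ it is disjoint from $\esssupptsc(c)$, which gives the inclusion. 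The only aspect requiring care is the $\ff$-component of the wavefront set, but this follows because derivatives of a symbol vanishing to infinite order on $\beta_{\poles}^{-1}(\pi^\perp)^{-1}[\tau-\epsilon,\tau+\epsilon]$ again vanish to infinite order there.

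For the residual statement, the key step is the covering identity
\[
  \bigcup_{p \in \Ctscd} \gamma_{\tsc}(p) = \pa \Ttsco^*[X;\poles],
\]
which is verified face by face from the definition of $\gamma_{\tsc}$: the singleton values on $\Ssc^*_{X \setminus \poles} X$ and $\Tsco^*_{\pa X \setminus \poles} X$ exhaust the portions of $\Stsc^*[X;\poles]$ and $\Ttsco^*_{\mf}[X;\poles]$ lying away from $\ff_\pm$; the slices $\gamma_{\tsc}(\tau) = \beta_{\poles}^{-1}(\pi^\perp)^{-1}\{\tau\}$ for finite $\tau \in W^\perp$ exhaust $\Ttsco^*_{\ff_\pm}[X;\poles]$ over the finite part of the fiber and reach the $\{\hat\tau = 0\}$ sphere at fiber infinity by closure; and $\gamma_{\tsc}(\pm\infty) = UH_\pm$ covers $\Stsc^*_{\ff_\pm}[X;\poles]$ by construction. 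Granting this, if $\WFtsc'(A) = \varnothing$ then $\gamma_{\tsc}(p) \cap \esssupptsc(a) = \varnothing$ for every $p \in \Ctscd$, which forces $\esssupptsc(a) = \varnothing$. Consequently $a$ vanishes to infinite order at every boundary face of $\Ttsco^*[X;\poles]$, i.e., $a \in \Stsc^{-\infty,-\infty}$, and therefore $A = \Op_L(a) \in \Psitsc^{-\infty,-\infty}$.

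The main obstacle is the covering property at the corners, specifically at the codimension-two corner $\ff_\pm \cap \mf$ and at the fiber-infinity boundary over $\ff_\pm$. Confirming the covering there requires using that each $\gamma_{\tsc}(p)$ is a closed subset of $\pa \Ttsco^*[X;\poles]$, together with the fact that the compactification $\Wperpo$ was introduced precisely so that $\pm\infty$ contribute the $\tau = \pm\infty$ slices over $\ff_\pm$ that finite $\tau \in W^\perp$ cannot reach. Once the geometry near these corners is tracked carefully via $\beta_{\poles}$ and the radial compactification in $(\tau,\zeta)$, the covering is straightforward.
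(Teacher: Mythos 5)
Your proposal is correct and takes essentially the same approach as the paper's: the first inclusion is microlocality of the composition expressed via containment of essential supports, and the second follows from the surjectivity identity $\gamma_{\tsc}(\Ctscd) = \partial \Ttsco^*[X;\poles]$. You have simply unpacked in detail what the paper invokes in one sentence (the composition formula for essential supports, and the face-by-face verification of the covering, with the corner $\ff\cap\mf$ and the fiber-infinity equator over $\ff_\pm$ being the only delicate points).
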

\begin{proof}
    The first property follows from microlocality of the composition and
    the second claim easily follows from the fact that $\gamma_{\tsc}$ is surjective meaning that $\gamma_{\tsc}(\Ctscd) = \pa \Ttsco^*[X; \poles]$.
\end{proof}

Now we define the elliptic sets.  Over $\mf$ and fiber infinity, the
definition of ellipticity is exactly as in the standard scattering
case, i.e., non-vanishing (or, for operators acting on sections of
vector bundles, invertibility) of the principal symbol.  Over $\ff$ in
$W^\perp$, the correct notion of ellipticity is invertibility
between appropriate scattering Sobolev spaces.

To define the elliptic set, we note that the two components of the
symbol $\fibsymb{m, r}(A)$ and $\mfsymb{m, r}(A)$ define, by
restriction, functions on $\Ssc_{X \setminus \poles}^* X$ and
$\Tsco_{\pa X \setminus \poles}^* X$, respectively.
\begin{definition}\label{def:elliptic set}
    Let $A \in \Psitsc^{m, r}$.  The $\tsc$-elliptic set
    $\Elltsc(A)$ is
    \begin{align*}
        \Elltsc(A) = \Ell_{\fib}(A) \cup \Ell_{\mf}(A) \cup \Ell_{\ff}(A) \subset \Ctscd\,,
    \end{align*}
    with
    \begin{align*}
        \Ell_{\fib}(A) &= \{\alpha \in \Ssc_{X \setminus \poles}^* X \colon
                         \fibsymb{m,r}(A)(\alpha) \neq 0  \}\,,\\
        \Ell_{\mf}(A) &= \{\alpha \in \Tsco_{\pa X \setminus \poles}^* X \colon
                        \mfsymb{m,r}(A)(\alpha) \neq 0  \}\,,\\
    \end{align*}
    while
  \begin{align*}
        \Ell_{\ff}(A) &= \{\tau \in W^\perp \colon
                        \ffsymb{r}(A)(\tau) \text{ is scattering
                        elliptic and invertible}\}\, \\
    &\phantom{= \{} \cup \{ \pm \infty \in \partial \Wperpo :
      \fibsymb{m,r}(A) \text{ is nowhere vanishing on } UH_\pm \}.
    \end{align*}

    Moreover, we set
    \begin{align}\label{eq:def_Chartsc}
        \Chartsc(A) \coloneqq \Ctscd \setminus \Elltsc(A)\,,
    \end{align}
    the $\tsc$-characteristic set of $A$.
\end{definition}

As expected, one obtains elliptic estimates on the elliptic set.
\begin{proposition}[$\tsc$-elliptic regularity, cf.~\cite{BDGR}*{Prop.~4.20},
  \cite{Vasy2000}*{Lemma 9.3}]
  \label{prop:elliptic_regularity_tsc}
Let $u \in \schwartz'$ and $A \in \Psitsc^{m,r}$, $B, Q' \in
\Psitsc^{0,0}$.  Assume that $\WFtsc'(B) \subset \Elltsc(A) \cap
\Elltsc(Q')$.  For $s, \ell \in \mathbb{R}$, if $Q' A u \in \Sobsc^{s
  - m, \ell - r}$, then $Bu \in \Sobsc^{s,\ell}$ and for any $M,N \in
\mathbb{R}$ there is $C > 0$ such that
\begin{equation*}
  \norm{B u}_{s, \ell} \le C \left(\norm{Q' A u}_{s - m, \ell - r}  + \normres{u} \right).
\end{equation*}
\end{proposition}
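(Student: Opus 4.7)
The plan is to reduce the regularity claim to a microlocal parametrix construction for $Q'A$ on a neighborhood of $\WFtsc'(B)$. First, choose open sets $U_1, U_2 \subset \Ctscd$ with $\WFtsc'(B) \subset U_1$, $\overline{U_1} \subset U_2$, and $\overline{U_2} \subset \Elltsc(A) \cap \Elltsc(Q')$. Using standard cutoff constructions in the $\tsc$-calculus, pick $B_1 \in \Psitsc^{0,0}$ with $\WFtsc'(B_1) \subset U_1$ which is microlocally equal to the identity on $\WFtsc'(B)$, in the sense that $B - B B_1 \in \Psitsc^{-\infty,-\infty}$.

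Next, I would construct a parametrix $G \in \Psitsc^{-m,-r}$ with $\WFtsc'(G) \subset U_2$ satisfying
\begin{equation*}
G(Q'A) - B_1 \in \Psitsc^{-\infty,-\infty}.
\end{equation*}
The initial approximation $G_0$ is obtained by quantizing the inverse of $\prinsymb{m,r}(Q'A)$ on $U_2$. Each of the four components of the principal symbol is inverted: the fiber-infinity and $\mf$ scalar components are inverted pointwise by ellipticity; the indicial operators $\ffsymbpm{r}(Q'A)(\tau)$ are inverted as smooth families of scattering pseudodifferential operators on $\ff_\pm$ by the $\ff$-ellipticity assumption, with behavior as $\tau \to \pm\infty$ controlled by the semiclassical ellipticity on $UH_\pm$. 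The matching conditions in the proposition following \eqref{eq:2} ensure that these component inverses assemble into a legitimate $\tsc$-symbol of order $(-m,-r)$. Then $G_0 (Q'A) - B_1 \in \Psitsc^{-1,-1}$ microlocally on $U_1$, and an asymptotic (Neumann/Borel) summation improves $G_0$ to a full parametrix $G$ with residual error $R = G(Q'A) - B_1 \in \Psitsc^{-\infty,-\infty}$.

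With $G$ in hand, write
\begin{equation*}
B u = B G (Q'A) u - B R u + (B - B B_1) u,
\end{equation*}
where both $BR$ and $B - BB_1$ lie in $\Psitsc^{-\infty,-\infty}$. Boundedness of $BG \in \Psitsc^{-m,-r}$ as a map $\Sobsc^{s-m,\ell-r} \to \Sobsc^{s,\ell}$ gives $\norm{BG(Q'A)u}_{s,\ell} \le C \norm{Q'Au}_{s-m,\ell-r}$, while the residual operators map $\Sobsc^{-N,-M} \to \Sobsc^{s,\ell}$ boundedly for any $M, N$. Taking norms yields the stated estimate; since $u \in \schwartz'$ lies in $\Sobsc^{-N_0,-M_0}$ for some $N_0, M_0$, the inclusion $Bu \in \Sobsc^{s,\ell}$ follows from the same identity, and the estimate for arbitrary $N, M$ is obtained by a standard regularization argument.

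The main obstacle is the $\ff$-component of the parametrix construction: inversion of $\ffsymbpm{r}(Q'A)(\tau)$ is not pointwise but requires selecting a smooth family of inverse scattering operators on $\ff_\pm$ and matching these with the fiber and $\mf$ symbol components at the corners $\Stsc^*_\ff[X;C]$ and $\Ttsco^*_{\ff \cap \mf}[X;C]$. The definition of $\ff$-ellipticity in Definition \ref{def:elliptic set}, encompassing both invertibility of the indicial family on $W^\perp$ and semiclassical ellipticity at $UH_\pm$, is precisely what is needed to make this construction go through; a detailed treatment of this step appears in \cite{BDGR}*{Prop.~4.20} and \cite{Vasy2000}*{Lemma 9.3}.
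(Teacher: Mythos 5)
Your proposal is correct and follows essentially the same approach as the references the paper cites for this result (Vasy \cite{Vasy2000}*{Lemma 9.3} and \cite{BDGR}*{Prop.~4.20}): microlocal elliptic regularity in the $\tsc$-calculus via construction of a microlocal parametrix whose symbol inverts all four components of $\prinsymbz(Q'A)$ on a neighborhood of $\WFtsc'(B)$, followed by asymptotic summation and the resulting parametrix identity. The only minor imprecision is your closing remark about needing a ``regularization argument'' to get the estimate for arbitrary $N,M$; in fact the parametrix identity gives the estimate directly for all $N,M$ since the residual operators $BR$ and $B - BB_1$ lie in $\Psitsc^{-\infty,-\infty}$ and hence map $\Sobsc^{-N,-M}$ into $\Sobsc^{s,\ell}$ boundedly for every choice of $N,M$.
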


In order to state a wavefront set condition for the Feynman propagator, we have to define the $\tsc$-wavefront set.
Our definition is very similar to the one in \cite{Vasy2000}, with the modification that we use $\Ctscd$.
We note that there is a related wavefront set defined in \cite{Vasy2001}, which is slightly weaker.

\begin{definition}
    Let $u \in \CdmI(X)$. A point $\zeta \in \Ctscd$ is not in the $\tsc$-wavefront set, $\zeta \not \in \WFtsc(u)$, if there exists $A \in \Psitsc^{0,0}$ with $\zeta \in \Elltsc(A)$
    such that $A u \in \CdI(X)$.
\end{definition}

For $\zeta \in \Ssc^*_{X \setminus \poles} X \cup \Tsc^*_{\pa X \setminus \poles} X$, the $\tsc$-wavefront set $\WFtsc(u)$ coincides with the normal (scattering-) wavefront set $\WF(u)$.
Over the poles, we have that if $\tau_0 \in W^\perp$ with $\WF(u) \cap \overline{(\pi^{\perp})^{-1}(\tau_0)} = \varnothing$, then $\tau_0 \not \in \WF_{\ff}(u)$,
where $\WF_{\ff}(u) \coloneqq \WFtsc(u) \cap \Wperpo$.

\section{\texorpdfstring{$\tsc$}{3sc}-localizers}\label{sec:tsc localizers}

As in the scattering case in Section~\ref{sec:scattering section}, we
define a domain $\cX$ for the operator $P_{V}$, which will become the
range of the Feynman propagator, and this domain is again defined
using microlocalizers to the radial sources analogous with those in
equation~\eqref{eq:scattering source cutoff}.  In this section we
describe these microlocalizers and record several necessary
modifications of estimates in our previous work~\cite{BDGR}.

\subsection{Localization to the characteristic set}
Following Vasy~\cite{Vasy2000}, we use the functional calculus as in our
previous work~\cite{BDGR}*{Section 5} to adapt the
microlocalizers to the radial sets over the poles.  

We first describe the functions of the operators  used in
the construction.
Let $V_{0} \in \Psisc^{1,-1}(\RR^n)$, $V_{0}^{*} = V_{0}$.  For $E > 0$ sufficiently large and $\psi \in \CcI(\RR)$, the operator
\begin{align*}
    G_{\psi, V_{0}} \coloneqq \psi\left( (D_t^2 + H_{V_0} + E)^{-1} P_{V_0} \right)
\end{align*}
is well-defined by the functional calculus of the (static) operator
$H_{V_{0}}$, $G_{\psi,V_0} \in \Psitsc^{0,0}$, and satisfies (cf.~\cite{BDGR}*{Sect.~5.1})
\begin{itemize}
    \item $\ffsymbz(G_{\psi, V_0})(\tau) = \psi\left( (\tau^2 + H_{V_0} + E)^{-1} (\tau^2 - H_{V_0}) \right)$,
    \item $\ffsymbz(G_{\psi, V_0}) \in \Psisclsc^{-\infty,0,0}$.
\end{itemize}

In \cite{BDGR}, we only used $G_{\psi, V_0}$ near the poles, now we need a global operator $G_\psi$ that coincides with $G_{\psi,V_+}$ near $\NP$ and
$G_{\psi,V_-}$ near $\SP$.

\begin{definition}\label{def:asymptotic_static}
    Let $V \in \rho_{\mf} \Psitsc^{1,0}$ and $r \in \ioo{0, \infty}$.
    We say that $V$ is asymptotically static of order $r$ at $\poles$, if there exist
    $V_{\pm} \in S_{\cl}^{-1}(\RR^n_z)$ and $\chi_{\pm} \in \CcI(X)$ such that
    \begin{enumerate}
        \item $\chi_+(\NP) = 1$ and $\chi_-(\SP) = 1$,
        \item $\chi_{\pm} \cdot (V - V_{\pm}) \in \Psitsc^{1, -r}$.
    \end{enumerate}
\end{definition}
We note that if $V$ is asymptotically static of order $r > 0$, then the static parts $V_{\pm}$ are uniquely determined.

For the definition of $G_\psi$, we choose $\chi_{\pm}$ as in the previous definition and with the additional property that $\chi_+ = 0$ in a neighborhood of $\SP$
and $\chi_- = 0$ in a neighborhood of $\NP$.
We set
\begin{align*}
    G_\psi \coloneqq \chi_+ G_{\psi, V_+} + \chi_- G_{\psi, V_-} + (1 - \chi_+ - \chi_-) G_{\psi,0}\,.
\end{align*}
We have that $G_\psi \in \Psitsc^{0,0}$ and since by \cite{BDGR}*{Eq. (5.8) and Eq. (5.9)} the $\mf$ and $\fib$-symbols of $G_{\psi,V_0}$ are independent of
$V_0$, we have that $G_\psi$ is defined independently of the choice of $\chi_\pm$ up to $\Psitsc^{-1,-1}$.
    
We can estimate $B u$ by $BG_\psi u$ and $G' P_V u$
if $G'$ is elliptic on the wavefront set of $B$:
\begin{proposition}[cf. \cite{BDGR}*{Proposition~5.14}]\label{prop:elliptic_Gpsi_estimate}
    Let $V \in \rho_{\mf} \Psitsc^{1,0}$ be asymptotically static of order $r$.
    Let $\varphi \in \CcI(\RR)$ with $\varphi|_{(-\eps, \eps)} \equiv 1$ for some $\eps > 0$,
    $B, G', B' \in \Psitsc^{0, 0}$
    such that
    \begin{align*}
        \WFtsc'(B) \subset \Elltsc(G') \cap \Elltsc(B')\,.
    \end{align*}
    For all $M, N \in \NN$ and $s, \ell \in \RR$ there exists $C > 0$ such that for all $u \in \Sobres$,
    \begin{align*}
        \norm{B u}_{s,\ell} \leq C \left( \norm{B G_\varphi u}_{s,\ell} + \norm{G' P_V u}_{s-2, \ell} + \norm{B' u}_{s-1,\ell-r} + \normres{u}\right)\,,
    \end{align*}
    provided that the right hand side is finite.
\end{proposition}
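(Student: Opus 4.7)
The estimate is a perturbation of its static-potential analog \cite{BDGR}*{Proposition~5.14}, which proves the same bound with $V$ replaced by a fixed $V_0 \in S^{-1}_{\cl}(\RR^n_z)$ (and $G_\varphi$ correspondingly replaced by $G_{\varphi, V_0}$). The plan is to localize via a microlocal partition of unity adapted to the three model potentials appearing in the definition of $G_\varphi$, apply the static estimate in each piece, and then transfer from the local static model back to $P_V$ using the asymptotic-static hypothesis. The one-power gain in weight ($-r$) in the error term $\norm{B'u}_{s-1,\ell-r}$ is precisely what is afforded by the asymptotic staticity of order $r$.

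First I would choose an auxiliary partition of unity $\widetilde\chi_+, \widetilde\chi_-, \widetilde\chi_0 \in \CI([X;\poles])$ summing to $1$, with $\widetilde\chi_\pm$ supported where $\chi_\pm \equiv 1$ and $\widetilde\chi_0$ supported in the complement of a neighborhood of $\poles$. Since $\norm{Bu}_{s,\ell} \leq \sum_\bullet \norm{\widetilde\chi_\bullet Bu}_{s,\ell}$, it suffices to bound each piece separately. On $\supp\widetilde\chi_+$, the operator $G_\varphi$ agrees with $G_{\varphi, V_+}$ modulo contributions whose Schwartz kernels are disjoint from $\supp\widetilde\chi_+$ (and hence whose composition with $\widetilde\chi_+$ lies in $\Psitsc^{-\infty,-\infty}$). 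I would then pick an auxiliary cutoff $\widetilde G \in \Psitsc^{0,0}$ with $\WFtsc'(\widetilde\chi_+ B) \subset \Elltsc(\widetilde G)$ and $\WFtsc'(\widetilde G) \subset \Elltsc(G') \cap \Elltsc(B')$, which is possible because the right-hand side is an open neighborhood of $\WFtsc'(B)$. Applying \cite{BDGR}*{Proposition~5.14} with $V_0 = V_+$ to $\widetilde\chi_+ B$ (with auxiliary cutoff $\widetilde G$) yields
\begin{align*}
    \norm{\widetilde\chi_+ B u}_{s, \ell} \leq C \bigl( \norm{\widetilde\chi_+ B G_\varphi u}_{s, \ell} + \norm{\widetilde G P_{V_+} u}_{s-2, \ell} + \normres{u} \bigr).
\end{align*}

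Writing $P_{V_+} = P_V + (V - V_+)$ and using asymptotic staticity to get $\widetilde\chi_+(V - V_+) \in \Psitsc^{1, -r}$, so that $\widetilde G(V - V_+) \in \Psitsc^{1, -r}$ with $\WFtsc'(\widetilde G(V - V_+)) \subset \WFtsc'(\widetilde G) \subset \Elltsc(B')$, microlocal elliptic regularity (Proposition~\ref{prop:elliptic_regularity_tsc}) combined with the mapping $\Psitsc^{1,-r} \colon \Sobsc^{s-1,\ell-r} \to \Sobsc^{s-2,\ell}$ gives
\begin{align*}
    \norm{\widetilde G (V - V_+) u}_{s-2, \ell} \leq C \bigl( \norm{B' u}_{s-1, \ell-r} + \normres{u}\bigr).
\end{align*}
The analysis near $\SP$ is identical with $V_-$ in place of $V_+$, and in the bulk one uses the trivial potential, noting that $\widetilde\chi_0 V \in \Psitsc^{1, -1} \subset \Psitsc^{1, -r}$ after reducing (without loss) to $r \leq 1$, since the right-hand side of the asserted estimate is monotone in $r$.

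The main obstacle is not analytic but rather the microlocal bookkeeping required to interpose auxiliary cutoffs like $\widetilde G$ between $B$ and the given $G', B'$ while preserving all wavefront-containment conditions of Proposition~\ref{prop:elliptic_regularity_tsc} and \cite{BDGR}*{Proposition~5.14}. Once this is set up, no new analytic input beyond the static estimate and microlocal elliptic regularity is needed.
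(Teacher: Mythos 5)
Your partition-of-unity approach is a genuine alternative to the paper's proof. The paper instead begins from the per-pole parametrix identities $\id - G_{\varphi,V_\pm} = E_{\varphi,V_\pm}P_{V_\pm}$ with $E_{\varphi,V_\pm}\in\Psitsc^{-2,0}$, combines them into the single operator identity
\begin{equation*}
\id = G_\varphi + \tilde E P_V + R + (1 - \chi_+ - \chi_-)(\id - G_{\varphi,0}), \qquad R \in \Psitsc^{-1,-r},
\end{equation*}
and estimates each term after multiplying by $B$; you instead decompose the norm of $Bu$ and invoke the static norm estimate piecewise. Near the poles the two schemes are essentially parallel, and yours works there provided you make explicit that the auxiliary cutoff $\tilde G$ is chosen microsupported in the region where $\chi_\pm \equiv 1$, since that is what guarantees $\tilde G (V - V_\pm) \in \Psitsc^{1,-r}$ rather than merely $\Psitsc^{1,-1}$ (away from $\poles$ the difference $V - V_\pm$ only decays to order $1$).

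The gap is in the bulk. Applying the static estimate with $V_0 = 0$ leaves you with $\norm{\tilde G P_0 u}_{s-2,\ell}$ on the right, and after writing $P_0 = P_V - V$ you must absorb $\norm{\tilde G V u}_{s-2,\ell}$. Since $V \in \rho_{\mf}\Psitsc^{1,0}$ gives only order-$1$ decay away from $\poles$, the best available bound is $\norm{\tilde G V u}_{s-2,\ell} \lesssim \norm{B' u}_{s-1,\ell - 1} + \normres{u}$, and for $r > 1$ the term $\norm{B'u}_{s-1,\ell-1}$ is \emph{not} controlled by the asserted right-hand side $\norm{B'u}_{s-1,\ell-r}$. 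Your ``reduce without loss to $r \le 1$'' does not repair this: the asserted estimate is strictly \emph{stronger} for larger $r$ (the $B'$ term on the right shrinks as $r$ grows), so proving the $r = 1$ case does not yield the general case, and the proposition is in fact applied later with $r \ge 2$. The correct fix, and what the paper does, is to avoid passing through $P_0$ in the bulk: $(1-\chi_+-\chi_-)(\id - G_{\varphi,0})$ is microsupported away from $\Chartsc(P_V)$ (since $\Char(P_V)$ agrees with $\Char(P_0)$ over $\mf$, $V$ being decaying there), so $\norm{\widetilde\chi_0 B(\id - G_{\varphi,0})u}_{s,\ell}$ is bounded directly by $\norm{G'P_V u}_{s-2,\ell} + \normres{u}$ via elliptic regularity for $P_V$ itself, with no $V$-remainder and hence no weight loss.
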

\begin{proof}
    Recall~\cite{BDGR}*{Proposition~5.14} that we have $E_{\psi, V_0} \in \Psitsc^{-2, 0}$ which satisfies $(\id - G_{\psi, V_0}) = E_{\psi, V_0} P_{V_0}$.
    We set
    \begin{align*}
        \tilde{E} &= \chi_+ E_{\psi, V_+} + \chi_- E_{\psi, V_-}
        \intertext{and}
        R &= \chi_+ E_{\psi, V_+} (P_{V_+} - P_V) + \chi_- E_{\psi,
            V_-} (P_{V_-} - P_V) \in \Psitsc^{-1,-r}
    \end{align*}
    Therefore, we have that
    \begin{align*}
        \id = G_\psi + \tilde{E} P_V + R + (1 - \chi_+ - \chi_-) (\id - G_{\psi,0})
    \end{align*}
    Since $\WFtsc'(B) \subset \Elltsc(G')$, we obtain
    \begin{align*}
        \norm{B \tilde{E} P_V u}_{s,\ell} \lesssim \norm{G' P_V u}_{s-2, \ell} + \normres{u},
    \end{align*}
    while $\WFtsc'(B) \subset \Elltsc(B')$ implies
    \begin{align*}
        \norm{B R u}_{s,\ell} \lesssim \norm{B' u}_{s-1, \ell-r} + \normres{u},
    \end{align*}
    and, since $P_{V}$ is elliptic on the microsupport of $(1 - \chi_+ - \chi_-) (\id - G_{\psi,0}) \in \Psisc^{0,0}$,
    \begin{align*}
        \norm{B (1 - \chi_+ - \chi_-) (\id - G_{\psi,0}) u}_{s,\ell} \lesssim \norm{G' P_V u}_{s-2, \ell} + \normres{u}.
    \end{align*}
    Combining these estimates gives the claimed inequality.
\end{proof}

Moreover, we have an elliptic estimate for $B G_\varphi u$ by $Q G_\psi u$
given that $Q$ is elliptic on the wavefront set of $B G_\varphi$ and the support of $\varphi$ is contained in the set $\set{\psi = 1}$.
\begin{proposition}\label{prop:elliptic_regularity_Gpsi}
    Let $B, Q \in \Psitsc^{m,r}$ and $\varphi, \psi \in \CcI(\RR)$ with $\varphi \psi = \varphi$ and $\varphi(0) = 1$.
    If $\WFtsc'(B G_{\varphi}) \subset \Elltsc(Q)$, then for any $N, M \in \NN$ there exists $C > 0$ such that for $u \in \Sobsc^{-N, -M}$,
    \begin{align*}
        \norm{B G_{\varphi} u} \leq C \left( \norm{Q G_\psi u} + \normres{u}\right)\,.
    \end{align*}
\end{proposition}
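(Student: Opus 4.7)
The plan is to derive the desired estimate from the microlocal elliptic regularity of Proposition~\ref{prop:elliptic_regularity_tsc}, after establishing the residual identity $G_\varphi - G_\varphi G_\psi \in \Psitsc^{-\infty,-\infty}$, which is the algebraic consequence of the hypothesis $\varphi \psi = \varphi$ combined with the functional calculus construction of $G_\varphi$ and $G_\psi$.

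First I would establish this identity. On each patch given by Definition~\ref{def:asymptotic_static}, the pointwise identity $G_{\varphi, V_0} G_{\psi, V_0} = G_{\varphi, V_0}$ holds exactly by functional calculus applied to the spectral operator $(D_t^2 + H_{V_0} + E)^{-1} P_{V_0}$, since $\varphi(s)\psi(s) = \varphi(s)$. Expanding the product $G_\varphi G_\psi$ via the patched definitions and exploiting (i) $\chi_+\chi_- = 0$, (ii) the one-order-lower behavior of commutators with smooth cutoffs $\chi_\pm$ in the $\tsc$-calculus, and (iii) the fact that the $\mf$- and fiber symbols of $G_{\psi, V_0}$ are independent of $V_0$ (per \cite{BDGR}*{Eqs.~(5.8), (5.9)}), the leading-order cancellation yields $G_\varphi G_\psi - G_\varphi \in \Psitsc^{-1,-1}$. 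Order-by-order asymptotic summation in the $\tsc$-calculus promotes this to residuality in $\Psitsc^{-\infty,-\infty}$.

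With this in hand, I would write
\[
B G_\varphi u = B G_\varphi G_\psi u + B R u, \quad R = G_\varphi - G_\varphi G_\psi \in \Psitsc^{-\infty,-\infty}.
\]
The residual term satisfies $\norm{BRu} \leq C \normres{u}$ for any prescribed $N,M$, since $BR \in \Psitsc^{-\infty,-\infty}$. For the main term, I would view $B G_\varphi$ as the test operator acting on $v = G_\psi u$: the hypothesis $\WFtsc'(BG_\varphi) \subset \Elltsc(Q)$ allows a direct application of Proposition~\ref{prop:elliptic_regularity_tsc}, giving
\[
\norm{B G_\varphi (G_\psi u)} \leq C\bigl(\norm{Q G_\psi u} + \norm{G_\psi u}_{-N,-M}\bigr) \leq C\bigl(\norm{Q G_\psi u} + \normres{u}\bigr),
\]
with the final inequality following from the boundedness of $G_\psi \in \Psitsc^{0,0}$ on $\Sobsc^{-N,-M}$. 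Summing the two contributions yields the desired estimate.

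The main obstacle is the residuality of $R$. While the principal symbol cancellation is immediate (since $\varphi \psi = \varphi$ collapses products of symbols), extending to all orders requires careful tracking of the $\tsc$-symbolic calculus corrections introduced by the patching functions $\chi_\pm$; one must verify that these do not obstruct the pointwise functional-calculus identity at any symbol order, which is the key content of the symbol computations in \cite{BDGR}*{Section~5}. Granted this, the remainder of the argument is routine.
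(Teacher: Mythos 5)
Your overall strategy (reduce to a residual remainder plus elliptic regularity) is in the right spirit, but the key claim that $R = G_\varphi - G_\varphi G_\psi \in \Psitsc^{-\infty,-\infty}$ is unjustified and, as far as I can see, false. The functional-calculus identity $G_{\varphi,V_0}G_{\psi,V_0}=G_{\varphi,V_0}$ holds \emph{exactly for a fixed $V_0$}, but $G_\varphi$ and $G_\psi$ are patched together using three different potentials ($V_+$, $V_-$, $0$). Expanding $G_\varphi G_\psi$ and using $\varphi\psi=\varphi$ to cancel the diagonal ($V_a$-with-$V_a$) terms, one is left with cross terms of the form
\[
\chi_+(1-\chi_+)\bigl(G_{\varphi,V_+}-G_{\varphi,0}\bigr)\bigl(G_{\psi,0}-G_{\psi,V_+}\bigr) + \text{(commutators)},
\]
localized in the annular transition regions where the patching functions are neither $0$ nor $1$. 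Each factor lies in $\Psitsc^{-1,-1}$ because the $\mf$- and fiber-symbols of $G_{\bullet,V_0}$ are independent of $V_0$ --- this is exactly what the paper uses to say $G_\psi$ is defined independently of $\chi_\pm$ \emph{only up to $\Psitsc^{-1,-1}$} --- but the products are then in $\Psitsc^{-2,-2}$, not $\Psitsc^{-\infty,-\infty}$. Asymptotic (Borel) summation cannot repair this: that tool constructs a symbol from a prescribed sequence of homogeneous terms, it does not make a fixed operator residual. For the cross terms to be residual you would need $V_+-V_-$ or $V_\pm$ themselves to be Schwartz in the transition region, which is not part of the hypotheses. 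Consequently $BR$ is generically only in $\Psitsc^{m-2,r-2}$, and $\norm{BRu}$ cannot be absorbed into $\normres{u}$ for arbitrary $N,M$.

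The paper's proof sidesteps this by decomposing $B$ \emph{spatially}: one introduces cutoffs $\tilde\chi_\pm$ with $\supp\tilde\chi_\pm$ contained in the interior of $\{\chi_\pm\equiv1\}$, and writes $B = B\tilde\chi_+ + B\tilde\chi_- + B(1-\tilde\chi_+-\tilde\chi_-)$. On $\supp\tilde\chi_+$ one has the \emph{exact} identities $\tilde\chi_+G_\varphi = \tilde\chi_+G_{\varphi,V_+}$ (because $\tilde\chi_+\chi_+=\tilde\chi_+$ and $\tilde\chi_+\chi_-=\tilde\chi_+(1-\chi_+-\chi_-)=0$) and $G_{\varphi,V_+}=G_{\varphi,V_+}G_{\psi,V_+}$; one then replaces $G_{\psi,V_+}$ by $G_\psi$ \emph{only after} the left factor $B\tilde\chi_+G_{\varphi,V_+}$ is in place, at which point pseudolocality and the disjoint-support condition between $\tilde\chi_+$ and $(1-\chi_+)$ force the discrepancy to be residual. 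The interior piece $B(1-\tilde\chi_+-\tilde\chi_-)G_\varphi$ is handled directly by elliptic regularity. In short, the key manoeuvre you are missing is to localize \emph{before} comparing the different functional-calculus operators, not after.
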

\begin{proof}
    Let $\tilde{\chi}_{\pm} \in \CcI(X)$ with $\supp \tilde{\chi}_{\pm} \subset \set{\chi_{\pm} \equiv 1}$, 
    $\tilde{\chi}_+ \equiv 1$ in a small neighborhood of $\NP$,
    and $\tilde{\chi}_- \equiv 1$ in a small neighborhood of $\SP$.
    Write $B = B \tilde{\chi}_+ + B \tilde{\chi}_- + B (1 - \tilde{\chi}_+ - \tilde{\chi}_-)$.
    We have that
    \begin{align*}
        B \tilde{\chi}_+ G_\varphi = B \tilde{\chi}_+ G_{\varphi, V_+} = B \tilde{\chi}_+ G_{\varphi, V_+} G_{\psi, V_+}\,.
    \end{align*}
    On the support of $B \tilde{\chi}_+G_{\varphi,V_{+}}$, $G_{\psi, V_+}$ is equal to to $G_\psi$ modulo $\Psisc^{-\infty, -\infty}$ and together with elliptic regularity, we have
    Then, we have that
    \begin{align*}
        \norm{B \tilde{\chi}_+ G_\varphi u} \lesssim \norm{Q G_\psi u} + \normres{u}\,.
    \end{align*}
    Similarly, we can estimate $B \tilde{\chi}_- G_\varphi u$.
    for the interior term, we notice that $B (1 - \tilde{\chi}_+ - \tilde{\chi}_-) G_{\varphi}$ is microsupported in the elliptic set of $Q G_\psi$ and therefore
    we can apply elliptic regularity directly.
\end{proof}

In order to localize to slices $\set{\tau = \tau_0}$, we want to
quantize symbols whose restrictions to $\poles$ are purely functions
of $\tau$.  
For this, we denote the fiber equator by
\begin{align*}
    \fibeq \coloneqq \pa \overline{\RR_{\tau, \zeta}^{n+1}} \cap \set{\tau = \tau_0}\,.
\end{align*}
Note that this set is independent of the choice of $\tau_0$.

We recall from \cite{BDGR}*{Proposition~5.15} that if $q \in
\CI([\Tsco^* X; \fibeq])$, using $\rho = 1/\tau$ as a rescaling
function, 
\begin{align*}
    \Op_L(x^{-\ell} \rho^{-s} q) G_\psi \in \Psitsc^{s,\ell}
\end{align*}
for any $\psi \in \CcI$ and $s, \ell \in \RR$.

We have the following variant of \cite{Vasy2000}*{Lemma~13.5}:
\begin{lemma}\label{lem:WF_QGpsi}
    Assume that $q |_{\NP} = f,$ where $f \in \CI(\Wperpo)$, i.e.,
    $q_{\NP}$ depends only on $\tau$.
    The operator wavefront set of $Q \coloneqq \Op_L(x^{-\ell} \rho^{-s} q) G_\psi$ satisfies
    \begin{align*}
        \WF_{\ff}'(Q) &\subset \esssupp(f) \cap \WF_{\ff}'(G_\psi)\,,\\
        \WF_{\mf}'(Q) &\subset \esssupp_{\mf}\left(q \psi\left( \frac{\tau^2 - (\abs{\zeta}^2 + m^2)}{\tau^2 + \abs{\zeta}^2 + m^2 + E} \right)\right) \,,\\
        \WF_{\fib}'(Q) &\subset \esssupp_{\fib}\left(q \psi\left( \frac{\tau^2 - \abs{\zeta}^2}{\tau^2 + \abs{\zeta}^2} \right)\right) \,.
    \end{align*}
\end{lemma}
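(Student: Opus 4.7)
The strategy is to compute the left symbol of $Q = \Op_L(x^{-\ell}\rho^{-s}q) G_\psi$ on each boundary hypersurface of $\Ttsco^*[X;\poles]$ using the multiplicativity of the $\tsc$-principal symbol, and then read off the essential support at each face. By \cite{BDGR}*{Proposition~5.15}, $Q \in \Psitsc^{s,\ell}$, so the three components $\WF_{\ff}'(Q)$, $\WF_{\mf}'(Q)$, and $\WF_{\fib}'(Q)$ of the operator wavefront set are well-defined, and we analyze them face by face.

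For the front face I would first decompose $G_\psi = \chi_+ G_{\psi,V_+} + \chi_- G_{\psi,V_-} + (1-\chi_+-\chi_-)G_{\psi,0}$, so that microlocally near $\NP$ the operator $Q$ agrees with $\Op_L(x^{-\ell}\rho^{-s}q) G_{\psi,V_+}$ modulo $\Psitsc^{s-1,\ell-1}$. Using the identification of the indicial operator as a scattering quantization of the $\ff_+$-restriction of the symbol (\cite{BDGR}*{Lemma~4.4} and equation~(3.2) in the excerpt), the indicial operator of $\Op_L(x^{-\ell}\rho^{-s}q)$ at $\tau_0$ is scalar multiplication by $\tau_0^{-s} f(\tau_0)$, since $q|_{\NP}(z,\tau,\zeta) = f(\tau)$ depends only on $\tau$. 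Hence
\[
  \ffsymbz(Q)(\tau_0) \equiv \tau_0^{-s} f(\tau_0)\, \ffsymbz(G_{\psi,V_+})(\tau_0) \quad \text{modulo lower order.}
\]
If $\tau_0 \notin \esssupp(f)$, the scalar factor vanishes identically in a $\tau$-neighborhood of $\tau_0$; if $\tau_0 \notin \WF_{\ff}'(G_\psi)$, the operator-valued factor vanishes to infinite order there. Either way, $\tau_0 \notin \WF_{\ff}'(Q)$. The argument at $\SP$ is analogous, and the behavior at $\pm\infty \in \partial\Wperpo$ is handled by the semiclassical principal symbol of $\ffsymbz(G_\psi)(\tau)$ at the hemisphere $UH_\pm$ (equation~(3.5) in the excerpt), which for $G_\psi$ coincides with the fiber-infinity symbol analyzed next.

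For the main face and fiber infinity, away from the poles $G_\psi$ agrees with $G_{\psi,0}$ modulo $\Psitsc^{-1,-1}$, since the $\mf$- and $\fib$-symbols of $G_{\psi,V_0}$ are independent of $V_0$ (as noted in the paper after the definition of $G_\psi$). From \cite{BDGR}*{Eq.~(5.8)--(5.9)} one has
\[
  \mfsymbz(G_{\psi,0}) = \psi\!\left( \frac{\tau^2 - |\zeta|^2 - m^2}{\tau^2 + |\zeta|^2 + m^2 + E}\right), \quad \fibsymbz(G_{\psi,0}) = \psi\!\left( \frac{\tau^2 - |\zeta|^2}{\tau^2 + |\zeta|^2}\right).
\]
Multiplicativity of the principal symbol then gives $\mfsymbz(Q)$ and $\fibsymbz(Q)$ as products of these with the restrictions of $x^{-\ell}\rho^{-s}q$ to $\Ttsco^*_{\mf}[X;\poles]$ and $\Stsc^*[X;\poles]$ respectively, and passing to essential supports yields the claimed inclusions.

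The hard part will be the front-face bound: one must upgrade the vanishing of the scalar factor $\tau_0^{-s} f(\tau_0)$ in a $\tau$-neighborhood to vanishing of the full $\tsc$-symbol of $Q$ to infinite order on the lens $\beta_C^{-1}(\pi^\perp)^{-1}[\tau_0-\epsilon,\tau_0+\epsilon]$, as required by Definition~\ref{def:WFtsc}. This will follow because the full asymptotic expansion of the composition at $\ff_+$ involves only $z$-derivatives of $q|_{\NP}$, which vanish identically by the assumption $q|_{\NP} = f(\tau)$; the expansion therefore reduces at all orders to the scalar product $\tau^{-s} f(\tau)\, \ffsymbz(G_\psi)(\tau)$, and its essential support is controlled as claimed.
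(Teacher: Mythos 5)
The paper does not supply a proof of this lemma (it is stated as a ``variant of Vasy~\cite{Vasy2000}*{Lemma~13.5}''), so there is no paper argument to compare against; I will evaluate your proposal on its own terms. Your overall structure — use the decomposition of $G_\psi$, compute each component of the principal symbol via multiplicativity, and reduce the front-face contribution to a scalar factor $\tau^s f(\tau)$ (note: $\rho^{-s}=\tau^s$, not $\tau^{-s}$) times $\ffsymbz(G_\psi)(\tau)$ — is the right approach. The $\mf$ and $\fib$ computations are straightforward and you have them right.

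The gap is in the last paragraph, which you correctly flag as the ``hard part'' but do not close. Two issues. First, the derivatives that appear in the left-symbol composition $(x^{-\ell}\rho^{-s}q)\#g_\psi$ are $(\tau,\zeta)$-derivatives of the left factor paired with $(t,z)$-derivatives of $g_\psi$ — not ``$z$-derivatives of $q|_{\NP}$''. The correct statement is: all $\zeta$-derivatives of $q$ restricted to $\NP$ vanish because $q|_{\NP}=f(\tau)$, and all $\tau$-derivatives (together with the derivatives of $\rho^{-s}=\tau^s$) vanish near $\tau_0$ once $f$ vanishes identically near $\tau_0$. Second, and more seriously, Definition~\ref{def:WFtsc} requires the full symbol of $Q$ to lie in $\Stsc^{-\infty,-\infty}$ near every point of the lens $\beta_C^{-1}(\pi^\perp)^{-1}[\tau_0-\eps,\tau_0+\eps]$, which in particular demands vanishing of all $\rho_{\ff}$-\emph{normal} Taylor coefficients of the composed symbol at $\ff_+$. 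These normal derivatives involve $\partial_x^j q$, which are \emph{not} controlled by the assumption that the restriction $q|_{\NP}$ equals $f(\tau)$: that assumption only pins down the $j=0$ coefficient. Your reduction to ``the scalar product $\tau^s f(\tau)\ffsymbz(G_\psi)(\tau)$'' therefore only handles the leading normal order, and the claim that the expansion ``reduces at all orders'' to this product does not follow.

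To close the gap one must bring in the decay of $g_\psi$ away from the characteristic surface: on the portion of the lens away from $\{\tau_0^2=|\zeta|^2+m^2\}$, the left reduction of $G_\psi$ is $\Stsc^{-\infty,-\infty}$, so the normal derivatives of $q$ there are harmless; near the intersection of the lens with the characteristic set one then needs the Taylor series of $q$ in $\rho_\ff$ to vanish, not just its restriction. This is exactly how the lemma is actually applied in Lemma~\ref{lem:there-are-localizers}: there $q$ is arranged to vanish (or equal $1$) in a full neighborhood of the relevant radial set, not merely on $\ff$. Your proof should either invoke this interplay explicitly, or note that the statement is used under the stronger hypothesis that $q-f$ is $O(\rho_\ff^\infty)$ near the points of interest.
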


\subsection{Localization to the radial set}

The radial set is naturally defined on $\Tsco^* X$, but the natural
phase space for $\tsc$-operators is $\Ctscd$.  
Over the poles, we therefore define the $\tsc$-radial set as subsets of $W^\perp$, namely
\begin{align*}
    \Rad_{\ff,{}\sources} &\coloneqq  \left( \NP \times \set{ - m } \right) \cup \left( \SP \times \set{ + m} \right) \subset W^\perp\,,\\
    \Rad_{\ff,{}\sinks} &\coloneqq \left( \NP \times \set{ + m } \right) \cup \left( \SP \times \set{ - m} \right) \subset W^\perp\,.
\end{align*}

The entire $\tsc$-radial set is given by
\begin{align*}
    \Radtsc \coloneqq \left(\Rad \cap \left( \Ssc^*_{X \setminus \poles} X \cup \Tsc^*_{\pa X  \setminus \poles} X \right)\right) \cup \left( \poles \times \set{ \pm m } \right)
\end{align*}
and
\begin{align*}
    \Radtsc_{\sources} &\coloneqq \left(\Rad_{\sources} \cap \left( \Ssc^*_{X \setminus \poles} X \cup \Tsc^*_{\pa X  \setminus \poles} X \right) \right)\cup \Rad_{\ff,\sources}\,,\\
    \Radtsc_{\sinks} &\coloneqq \left(\Rad_{\sinks} \cap \left( \Ssc^*_{X \setminus \poles} X \cup \Tsc^*_{\pa X  \setminus \poles} X \right) \right)\cup \Rad_{\ff, \sinks}\,.
\end{align*}
We have that $\Radtsc = \Radtsc_{\sources} \sqcup \Radtsc_{\sinks}$
and $\Radtsc_{\bullet} \subset \gamma_{\tsc}( \Rad_{\bullet} )$ for $\bullet \in \set{\sources, \sinks}$.

We note that localizing to the characteristic set over the poles is
nuanced:  if $A \in \Psitsc^{0,0}$ is elliptic at $\Rad_{\ff,\sources}$, then there exists a neighborhood $U \subset \pa X$ of $\poles$ such that
$A$ is elliptic at $\Tsc^*_{U \setminus \poles} X \cap
\Rad_{\sources}$.  We therefore must further localize and employ a localizer of the form $Q G_\psi$ for appropriate
$\psi$ and $Q = \Op_L(q)$, with the symbol $q$ restricting over $\NP$
to a function of $\tau$.

\begin{definition}\label{def:localizer-to-src}
    Let $\delta > 0$.
    We call $Q \in \Psitsc^{0,0}$ a $\delta$-localizer to $\Radtsc_{\sources}$ if
    \begin{enumerate}
        \item $Q$ is microlocally the identity near the sources,
            \begin{align*}
                \WFtsc'(\id - Q) \cap \Radtsc_{\sources} = \varnothing\,,
            \end{align*}
        \item \label{it:Psitscsrc2} for all $\varphi \in \CcI(\RR)$ with $\supp \varphi \subset \ioo{-\delta,\delta}$
            we have
            \begin{align*}
                \WFtsc'(Q G_\varphi) \cap \Radtsc_{\sinks} = \varnothing\,.
            \end{align*}
    \end{enumerate}

    We denote the set of all $\delta$-localizers to
    $\Radtsc_{\sources}$ by $\Psitsc_{\sources,\delta}$.  We also define $\Psitsc_{\sinks, \delta}$, where the roles of $\Rad_{\sources}$ and $\Rad_{\sinks}$ are interchanged.
\end{definition}

\begin{lemma}\label{lem:there-are-localizers}
    The set $\Psitsc_{\sources, \delta}$ is non-empty for all $\delta > 0$.
\end{lemma}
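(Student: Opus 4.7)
The plan is to build $Q$ by gluing a standard scattering-calculus microlocalizer to the sources away from the poles with an operator of the form $\Op_L(q) G_\psi$ concentrated near the poles, where the restriction of $q$ to $C$ is a compactly supported function of $\tau$ chosen to vanish near $\Rad_{\ff,\sinks}$.

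Concretely, first I would take $Q_1 \in \Psisc^{0,0} \subset \Psitsc^{0,0}$ to be a scattering microlocalizer of the form~\eqref{eq:scattering source cutoff}, microlocally the identity in a neighborhood of $\Radtsc_{\sources} \setminus \gamma_{\tsc}(W^\perp)$ and with $\WFtsc'(Q_1)$ disjoint from both $\Radtsc_{\sinks}$ and from a neighborhood of $C$. Next, choose $f \in \CcI(\Wperpo)$ with $f \equiv 1$ on a neighborhood of $\Rad_{\ff,\sources}$ and $\supp(f|_{\NP}) \subset (-\infty, 0)$, $\supp(f|_{\SP}) \subset (0, \infty)$, so that $\esssupp(f)$ avoids $\Rad_{\ff,\sinks}$; extend $f$ to $q \in \CI([\Tsco^* X; \fibeq])$ with $q|_C = f$ and $\esssupp(q)$ contained in a small tubular neighborhood of $C$ disjoint from $\Radtsc_{\sinks}$. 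Pick $\psi \in \CcI(\RR)$ with $\psi \equiv 1$ on a neighborhood of $[-\delta, \delta]$, set $Q_2 = \Op_L(q) G_\psi \in \Psitsc^{0,0}$ using the remark preceding Lemma~\ref{lem:WF_QGpsi}, and put $Q = Q_1 + Q_2 - Q_2 Q_1$, so that $\id - Q = (\id - Q_2)(\id - Q_1)$ and $\WFtsc'(Q) \subset \WFtsc'(Q_1) \cup \WFtsc'(Q_2)$.

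Condition (1) then splits into two cases: off the poles, $Q_1$ is already microlocally the identity near $\Radtsc_{\sources}$ by construction; near $\Rad_{\ff,\sources}$, the points $(\NP, -m)$ and $(\SP, +m)$ lie in $\Chartsc(P_V)$, so $G_\psi$ is microlocally the identity there (since $\psi(0) = 1$), and $\Op_L(q)$ is likewise microlocally the identity in the region $\{q = 1\}$. Combining gives $\id - Q$ microlocally trivial on a neighborhood of $\Radtsc_{\sources}$. For condition (2), any $\varphi \in \CcI(\RR)$ with $\supp \varphi \subset (-\delta, \delta)$ satisfies $\psi \varphi = \varphi$, so $G_\psi G_\varphi = G_\varphi$ modulo $\Psitsc^{-\infty,-\infty}$ by the functional calculus; then Lemma~\ref{lem:WF_QGpsi} applied to $\Op_L(q) G_\varphi$ gives $\WF_{\ff}'(Q_2 G_\varphi) \subset \esssupp(f)$, which avoids $\Rad_{\ff,\sinks}$, while the $\mf$ and $\fib$ components of $\WFtsc'(Q_2 G_\varphi)$ are contained in $\esssupp(q)$, which avoids $\Radtsc_{\sinks}$ by construction. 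The $Q_1$ contribution satisfies $\WFtsc'(Q_1 G_\varphi) \subset \WFtsc'(Q_1)$, which already misses $\Radtsc_{\sinks}$.

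The main obstacle is checking that $\Rad_{\ff,\sources}$ actually lies in the set where $G_\psi$ acts microlocally as the identity. This reduces to verifying that the indicial family $\ffsymbz(P_V)(\pm m)$ fails to be scattering-invertible, so that $\tau = \pm m$ over the corresponding pole lies in $\Chartsc(P_V)$, combined with the fact that $\ffsymbz(G_\psi)(\tau)$ is the functional calculus of an expression vanishing on this set; both are direct consequences of the construction of $G_\psi$ recalled at the start of this section. Arranging for the extension $q$ of $f$ to remain supported in a small tubular neighborhood of $C$ while defining a valid symbol in $\CI([\Tsco^* X; \fibeq])$ is a routine partition-of-unity argument.
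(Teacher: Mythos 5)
The key step in your argument fails. You claim that since $(\NP,-m)$ and $(\SP,+m)$ lie in $\Chartsc(P_V)$ and $\psi(0)=1$, the operator $G_\psi$ is microlocally the identity near $\Rad_{\ff,\sources}$. But membership of $\tau_0$ in $\WF_{\ff}'(\id - G_\psi)^c$ is a \emph{global} condition on the full $\tau$-slab: it requires the total symbol of $\id - G_\psi$ to vanish to infinite order on all of $\beta_C^{-1}(\pi^\perp)^{-1}[\tau_0-\eps,\tau_0+\eps]$, i.e.\ for \emph{all} $\zeta$, not only near the characteristic set. For large $|\zeta|$ in that slab the argument of $\psi$ is near $-1$, so the symbol of $G_\psi$ is $0$ there (as $\psi$ is compactly supported), and the symbol of $\id - G_\psi$ equals $1$. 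Hence $\pm m \in \WF_{\ff}'(\id - G_\psi)$, and consequently $\Rad_{\ff,\sources} \subset \WF_{\ff}'(\id - Q_2)$ with $Q_2 = \Op_L(q)G_\psi$. Since your $Q_1 \in \Psisc^{0,0}$ has operator wavefront set disjoint from a neighborhood of $C$, $\id - Q_1$ is microlocally the identity over the poles and contributes nothing to killing this wavefront set; so $\id - Q = (\id - Q_2)(\id - Q_1)$ still has $\ff$-wavefront set meeting $\Rad_{\ff,\sources}$, and condition~(1) of Definition~\ref{def:localizer-to-src} fails.

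The paper's construction differs in exactly the place you need: it takes $Q = \Op_L(q)G_\psi + (\id - G_\psi)$, so that $\id - Q = \Op_L(1-q)G_\psi$ is precisely of the form to which Lemma~\ref{lem:WF_QGpsi} applies, with $(1-q)\rvert_C = 1 - f$ vanishing near $\Rad_{\ff,\sources}$. That added $\id - G_\psi$ term absorbs the off-characteristic part of the $\tau$-slab that your $Q_2$ leaves uncovered. (A secondary issue: your extension $q$ of $f$ is only required to be supported near $C$ and to restrict to $f$ there; you also need $q \equiv 1$ on a full neighborhood of $\Rad_{\sources}$ in $\Ttsco^*[X;C]$, not merely over $C$, for the $\mf$- and $\fib$-components of $\WFtsc'(\id - Q)$ to avoid $\Radtsc_{\sources}$. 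Also note the paper builds $Q$ as a single $\tsc$-quantization rather than gluing a scattering localizer $Q_1$ to a near-pole piece; once $q$ is taken to be $1$ in a neighborhood of all of $\Rad_{\sources}$, no separate $Q_1$ is needed.)
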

\begin{proof}
    Let $\eps > 0$ small and choose $f \in \CcI(\Wperpo)$ such that
    $f(+\infty, \tau) = f(-\infty, -\tau) = 0$
    and $f(+\infty, -\tau) = f(-\infty, \tau) = 1$
    for $\tau \in (m - \eps, m + \eps)$.
    We choose a function $q \in \CI([\Tsco^* X;\fibeq])$ such that $q|_{\poles}(\pm \infty, \tau, \zeta) = f(\pm \infty, \tau)$ and
    $q = 1$ in a neighborhood of $\Rad_{\sources}$ and $q = 0$ in a neighborhood of $\Rad_{\sinks}$.

    Fix $\psi \in \CcI(\RR)$ with $\psi(s) \equiv 1$ for $s \in \ioo{-\delta,\delta}$ and $\psi(s) \equiv 0$ for $\abs{s} > 2\delta$.
    By \cite{BDGR}*{Proposition~5.15}, we have that
    \begin{align*}
        Q \coloneqq \Op_L(q) G_\psi + (\id - G_\psi)
    \end{align*}
    is a $\tsc$-operator of order $(0,0)$.

    To show that $\WFtsc'(\id - Q) \cap \Radtsc_{\sources} = \varnothing$, we use that
    \begin{align*}
        \id - Q = \Op_L(1 - q) G_\psi \,.
    \end{align*}
    Since $q = 1$ in a neighborhood of $\Rad_{\sources}$ and $f = 1$ near $\tau = -m$, we have that $\WFtsc'(\id - Q) \cap \Radtsc_{\sources} = \varnothing$
    by Lemma~\ref{lem:WF_QGpsi}.

    For $\varphi \in \CcI$ with $\supp \varphi \subset \ioo{-\delta, \delta}$, we calculate
    \begin{align*}
        Q G_\varphi = \Op_L(q) G_\varphi
    \end{align*}
    and consequently,
    \begin{align*}
        \WF_{\ff}'(Q G_\varphi) \subset \esssupp(f) \cap \WF_{\ff}'(G_\varphi)\,.
    \end{align*}
    Since $f(+\infty, m) = 0$ it then follows that $m \not \in \WF_{\ff}'(Q G_\varphi)$.

    The same arguments apply to $\SP$ and therefore $Q \in
    \Psitsc_{\sources, \delta}$.
\end{proof}

\subsection{Propagation estimates}\label{sec:tsc estimates}

\begin{definition}
    Let $U, V, W \subset \Ctscd$. We say that
    $U$ is (backward) controlled by $V$ through $W$ if
    for all $\alpha \in \Char(P_0)$ that are incoming to $U$, in sense that
    \begin{align*}
        \pi_{X, \tau}(\alpha) \in \pi_{X,\tau}( \gamma_{\tsc}(U) \cap \Char(P_0))\,,
    \end{align*}
    there exists $s_{\alpha} <0$ such that
    \begin{align*}
        \exp(s_{\alpha} \Hamsc_p)(\alpha) \in V
    \end{align*}
    and for all $s \in [s_{\alpha}, 0]$,
    \begin{align*}
        \exp(s \Hamsc_p)(\alpha) \in W\,.
    \end{align*}
\end{definition}

We recall from \cite{BDGR} the various propagation estimates. We assume throughout that
$V \in \rho_{\mf} \Psitsc^{1,0}$ is asymptotically static of order $r \geq 1$ and
\begin{align*}
    V - V^* \in \Psitsc^{0,-2}\,.
\end{align*}
Moreover, we assume that $H_{V_{\pm}}$ have purely absolutely
continuous spectrum in $[m^2, \infty)$.  This condition is guaranteed
on $(m^{2},\infty)$ by well-known results in scattering theory; it is
really only an assumption at the bottom of the continuous spectrum $m^{2}$.

\begin{proposition}[Regular propagation estimate~\cite{BDGR}*{Proposition~6.2}]\label{prop:propagation_localized}
    Let $\delta > 0$ sufficiently small,
    $\varphi, \psi_1, \psi_2 \in \CcI(\RR)$ with $\supp \varphi \subset \ioo{-\delta, \delta}$ and $\psi_j|_{\ioo{-\delta, \delta}} \equiv 1$,
    and $B, E, G, G', B' \in \Psitsc^{0,0}$ such that
    \begin{enumerate}
        \item $\WF_{\ff}'(E) = \varnothing$,
        \item $\Hamsc_p(\alpha) \not = 0$ for all $\alpha \in \gamma_{\tsc}(\Elltsc(G))$,
        \item $\WFtsc'(B G_\varphi)$ is controlled by $\Elltsc(E)$ through $\Elltsc(G)$,
        \item $\WFtsc'(B) \subset \Elltsc(G') \cap \Elltsc(B')$.
    \end{enumerate}
    and $\psi_1, \psi_2 \in \CcI(\RR)$ with $\psi_j|_{(-\delta, \delta)} \equiv 1$.

    For all $M, N, s, \ell \in \RR$ and $u \in \Sobres$ with
    $E G_{\psi_1} u \in \Sobsc^{s,\ell}$,
    $G G_{\psi_2} P_V u \in \Sobsc^{s-1, \ell+1}$,
    $G' P_V u \in \Sobsc^{s-2, \ell}$,
    and $B' u \in \Sobsc^{s-1, \ell-r}$,
    it follows that $Bu \in \Sobsc^{s,\ell}$ and
    \begin{align*}
        \norm{B u}_{s,\ell} &\leq C \big( \norm{E G_{\psi_1} u}_{s,\ell} + \norm{G G_{\psi_2} P_V u}_{s-1, \ell+1} + \norm{G' P_V u}_{s-2, \ell} + \norm{B' u}_{s-1, \ell-r} \\
        &\phantom{\leq C \big(} + \normres{u} \big)\,.
    \end{align*}
\end{proposition}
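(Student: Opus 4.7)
The plan is to prove this via a positive commutator argument, after first reducing the estimate on $Bu$ to one on $BG_\varphi u$. Applying Proposition~\ref{prop:elliptic_Gpsi_estimate} to $B$ with the same operator $G'$, we have
\begin{equation*}
  \norm{B u}_{s,\ell} \leq C \left( \norm{B G_\varphi u}_{s,\ell} + \norm{G' P_V u}_{s-2,\ell} + \norm{B' u}_{s-1,\ell-r} + \normres{u} \right),
\end{equation*}
so it suffices to control $\norm{B G_\varphi u}_{s,\ell}$ by the same right-hand side (with an additional $\norm{E G_{\psi_1} u}_{s,\ell}$ and $\norm{G G_{\psi_2} P_V u}_{s-1,\ell+1}$). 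Because $\supp \varphi \subset (-\delta, \delta)$, the operator $G_\varphi$ is microsupported in a $\delta$-neighborhood of $\Char(P_V)$ (and localizes near $\Char(P_0) = \Char(P_V)$ at fiber/main face); further, since $\WF_\ff'(E) = \varnothing$, the entire estimate can be performed in the region away from $W^\perp$, where the $\tsc$-calculus reduces to the standard $\sc$-calculus on $X^\circ$ together with the interior-of-$\mf$ analysis.

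Next, build a commutant. By the control hypothesis, every Hamilton trajectory $\exp(s\Hamsc_p)(\alpha)$ through a point of $\WFtsc'(BG_\varphi) \cap \Char(P_V)$ reaches $\Elltsc(E)$ in backward time $s < 0$ while staying in $\Elltsc(G)$. Following the template of the BDGR regular propagation proof, construct a symbol $a \in S_{\tsc}^{s-1/2, \ell+1/2}$, supported in $\Elltsc(G) \cap \gamma_{\tsc}^{-1}(\text{small nbhd})$, such that
\begin{equation*}
  \Hamsc_p(a^2) = -b^2 + e^2 + r,
\end{equation*}
where $b$ is elliptic on $\WFtsc'(BG_\varphi)$, $\esssupptsc(e) \subset \gamma_{\tsc}(\Elltsc(E))$, and $r$ is a lower-order remainder concentrated near $\esssupptsc(a)$. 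Let $A = \Op_L(a)$, $B_0 = \Op_L(b)$, $E_0 = \Op_L(e)$; the principal-symbol calculation gives $i[P_V, A^*A] = -B_0^* B_0 + E_0^* E_0 + R$ modulo operators of order $(2s-2, 2\ell+1)$.

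Pairing with $G_\varphi u$ and integrating by parts yields
\begin{equation*}
  \norm{B_0 G_\varphi u}_{0,0}^2 \leq \norm{E_0 G_\varphi u}_{0,0}^2 + 2|\langle A G_\varphi u, A P_V G_\varphi u\rangle| + \text{l.o.t.},
\end{equation*}
where the cross term is estimated using the pseudodifferential mapping properties and the commutator $[P_V, G_\varphi]$ is handled using that $G_\varphi = G_{\psi_2} G_\varphi$ (up to smoothing) since $\psi_2 \varphi = \varphi$, producing the term $\norm{G G_{\psi_2} P_V u}_{s-1, \ell+1}$ after appropriate microlocalization of $G$ on $\WFtsc'(AG_\varphi)$. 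The non-self-adjoint part $V - V^* \in \Psitsc^{0,-2}$ contributes only terms of order $(2s-1, 2\ell-1)$, absorbed into $\norm{B' u}_{s-1,\ell-r}$ (using $r \geq 1$) and the residual norm. Microlocal elliptic regularity then upgrades $B_0 G_\varphi u$ to $B G_\varphi u$.

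The main obstacle, as in \cite{BDGR}, is that the commutator computation a priori involves divergent quantities, so it must be justified by a standard regularization argument: introduce a family $a_\epsilon = a \chi_\epsilon$ with $\chi_\epsilon \to 1$ in $S^{0,0}$, establish the estimate uniformly in $\epsilon$, and pass to the limit. This requires that the signs in $\Hamsc_p(\chi_\epsilon)$ are favorable at $\pm$ fiber infinity and spatial infinity, which is where the particular choice of $a$ matters. Since $\WF_\ff'(E) = \varnothing$ ensures that the full indicial family analysis of \cite{BDGR}*{\S6} is not needed here, and the computation is otherwise identical to the scattering-calculus propagation estimate, the regularization proceeds exactly as in loc.\ cit.
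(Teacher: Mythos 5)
The paper does not contain a proof of this proposition --- it is quoted directly from the authors' prior work \cite{BDGR}*{Proposition~6.2} --- so there is no in-paper argument to compare against. Evaluating your proposal on its own terms, the overall shape (reduce $\norm{Bu}$ to $\norm{BG_\varphi u}$ via Proposition~\ref{prop:elliptic_Gpsi_estimate}, then run a positive commutator estimate with a regularizer) is plausible, but the central claim that closes the argument is wrong.

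You assert that because $\WF_{\ff}'(E) = \varnothing$, ``the entire estimate can be performed in the region away from $W^\perp$'' and therefore ``the full indicial family analysis of \cite{BDGR}*{\S6} is not needed.'' This misreads the role of condition (1). That hypothesis places the \emph{source} of regularity $E$ away from the poles; it says nothing about the \emph{target} $\WFtsc'(BG_\varphi)$, which is exactly where the conclusion is being obtained. Over the poles, $\Chartsc(P_V) \cap W^\perp$ consists of the nonradial points $\{\lvert\tau\rvert > m\}$ (together with $\pm\infty$), and there is nothing in the hypotheses forcing $\WFtsc'(BG_\varphi)$ to avoid these; indeed, the point of a regular propagation estimate in the $\tsc$-calculus is to propagate control \emph{inward} from $\Elltsc(E)$ (away from the poles) toward such points. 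Consequently the commutant $a$ must be nontrivially supported over $\ff_\pm$, and the commutator $i[P_V, A^*A]$ there is not a scalar Hamilton-vector-field computation: its $\ff$-piece is the operator commutator of $\ffsymbz(P_V)(\tau) = \tau^2 - H_{V_0}$ with the quantized indicial family of $A^*A$, whose sign is not controlled by $\Hamsc_p(a^2)$ alone. This is precisely why the functional-calculus localizers $G_{\psi_j}$ appear in the right-hand side of the estimate and why \cite{BDGR}*{\S6} devotes substantial work to the indicial part; they are not, as your sketch suggests, merely a bookkeeping device for the commutator $[P_V, G_\varphi]$.

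A second, smaller issue: your regularization step asserts that ``the computation is otherwise identical to the scattering-calculus propagation estimate.'' In the $\tsc$-setting the regularizer $\chi_\epsilon$ itself must be compatible with the indicial family --- in particular, the sign of the extra commutator term produced by $\chi_\epsilon$ must be controllable at the level of $\ffsymbz$, not just at the symbolic level away from the poles. Simply importing the scattering regularization is not sufficient at $\ff_\pm$. To repair the proposal one would need to: (i) build the commutant using a $G_\varphi$-type factor so that $\ffsymbz(A)(\tau)$ is a function of $H_{V_0}$, reducing the indicial commutator positivity to a spectral-calculus computation for the static operator; and (ii) verify the matching conditions of the $\tsc$-symbol calculus for the resulting operator family. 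Both steps are the substantive content of \cite{BDGR}*{\S6} and cannot be waved away by condition (1).
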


Near the radial sets $\Radtsc$, we have two different estimates, depending on whether $\ell > 1/2$ or $\ell < - 1/2$:

\begin{proposition}[Above threshold radial point estimate~\cite{BDGR}*{Proposition~7.1}]
    \label{prop:tsc_radial_above}
    Let $\delta > 0$ sufficiently small, $\varphi, \psi_1, \psi_2 \in \CcI(\RR)$ with $\supp \varphi \subset \ioo{-\delta, \delta}$ and $\psi_j|_{\ioo{-\delta, \delta}} \equiv 1$,
    $\tau_0 \in \set{\pm m} \subset W^\perp$,
    and $B, B_1, G, G', B' \in \Psitsc^{0,0}$ such that
    \begin{enumerate}
        \item $\WFtsc'(B G_{\varphi})$ is contained in a sufficiently small neighborhood of $\tau_0 \in \Ctscd$,
        \item $\tau_0 \in \Ell_{\ff}(B) \cap \Ell_{\ff}(B_1)$,
        \item $\WFtsc'(B G_{\varphi}) \subset \Elltsc(B_1) \cap \Elltsc(G)$,
        \item $\WFtsc'(B) \subset \Elltsc(B') \cap \Elltsc(G')$.
    \end{enumerate}

    For all $M, N, s, s', \ell, \ell' \in \RR$ with $\ell > \ell' > -1/2$, $s > s'$,
    and $u \in \Sobres$ with
    $B_1 G_{\phi} u \in \Sobsc^{s', \ell'}$,
    $G G_{\psi} P_V u \in \Sobsc^{s-1, \ell+1}$,
    $G' P_V u \in \Sobsc^{s-2, \ell}$,
    and $B' u \in \Sobsc^{s- 1, \ell - r}$,
    it follows that $Bu \in \Sobsc^{s, \ell}$ and
    \begin{align*}
        \norm{B u}_{s,\ell} &\leq C \big( \norm{B_1 G_{\psi_1} u}_{s',\ell'} + \norm{G G_{\psi_2} P_V u}_{s-1, \ell+1} + \norm{G' P_V u}_{s-2, \ell} + \norm{B' u}_{s-1, \ell-r} \\
        &\phantom{\leq C \big(} + \normres{u} \big)\,.
    \end{align*}
\end{proposition}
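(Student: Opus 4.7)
The plan is to prove this by a positive commutator argument at the radial source $\tau_0 \in \{\pm m\}$, closely following the strategy of \cite{BDGR}*{Proposition~7.1} but with modifications that accommodate the explicit microlocalizers $G_{\psi_j}$ and the generalized a priori hypothesis $B_1 G_\varphi u \in \Sobsc^{s',\ell'}$ rather than a plain-$u$ hypothesis. At a source of $\Hamsc_p$, the flow is contracting toward the radial set in both the fiber and base directions, and the weight $\ell > -1/2$ is above the threshold value at which a positive-commutator estimate acquires a definite sign.

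Concretely, I would construct a commutant $A \in \Psitsc^{s - 1/2,\, \ell - 1/2}$ whose symbol has the form $a = b \cdot \rho_{\fib}^{-s + 1/2} \rho_{\infty}^{-\ell + 1/2} \cdot g_\varphi$, where $b$ is a smooth cutoff to a small neighborhood of $\tau_0$ in $\Ctscd$ compatible with $\WFtsc'(B G_\varphi)$, and $g_\varphi$ is the symbol of $G_\varphi$ so that $A$ microlocally factors through the characteristic set. The main computation is then the principal symbol of $i[P_V, A^*A]$ in $\Psitsc^{2s - 1,\, 2\ell}$: at the radial source its value is
\begin{align*}
   \prinsymbz(i[P_V, A^*A]) = -(2\ell + 1)\, \beta_0\, |a|^2 \;+\; \Hamsc_p(b^2)\cdot (\text{weight}) \;+\; \text{errors},
\end{align*}
where $\beta_0 > 0$ encodes the linearization of $\Hamsc_p$ at the source. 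Since $\ell > -1/2$, the leading term has a definite (negative) sign, and because $\Rad_{\sources}$ is a source, $b$ can be chosen so that $\Hamsc_p(b^2)$ is either nonpositive on the support of $a$ or is supported where $B_1$ is elliptic, to be absorbed into the a priori term.

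The next step is to pair and regularize. Formally, $\langle i[P_V, A^*A] u, u\rangle = 2 \Im \langle A u, A P_V u \rangle$ up to contributions from $V - V^* \in \Psitsc^{0,-2}$, which are lower order and can be absorbed into $B' u$ and remainder. To justify this pairing when $u \in \Sobres$ only, I would insert a standard family of regularizers $\Lambda_\epsilon \in \Psitsc^{-N, 0}$ converging to the identity, apply the identity to $A_\epsilon = \Lambda_\epsilon A \Lambda_\epsilon$, and take $\epsilon \to 0^+$; uniform bounds in $\epsilon$ come from the sign of the commutator symbol. The right-hand side is then estimated with Cauchy-Schwarz, $\WFtsc'(BG_\varphi) \subset \Elltsc(G)$ providing control by $\|G G_{\psi_2} P_V u\|_{s-1,\ell+1}$, $\WFtsc'(B) \subset \Elltsc(G') \cap \Elltsc(B')$ providing the $G' P_V u$ and $B' u$ error terms, and the cutoff structure of $b$ producing the a priori term $\|B_1 G_{\psi_1} u\|_{s',\ell'}$ on the incoming side.

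The main obstacle will be the interaction of the positive-commutator calculation with the microlocalizers $G_\psi$: the computation of $[P_V, A^* A]$ requires that $A$ and $G_\psi$ compose cleanly in $\Psitsc$, which depends on Proposition~\ref{prop:elliptic_regularity_Gpsi} and Lemma~\ref{lem:WF_QGpsi} to replace $G_\varphi$ by $G_{\psi_j}$ on the right-hand side without enlarging the microsupport improperly. A second subtlety is that the indicial operator $\ffsymbz(P_V)(\tau_0)$ at $\tau_0 = \pm m$ is at the bottom of the continuous spectrum of $H_{V_\pm}$; the spectral hypothesis (pure absolute continuity in $[m^2,\infty)$) is used precisely to guarantee that no spurious contributions appear at the source and that the weight improvement by one order (from $\ell$ to $\ell+1$ on the forcing side) is genuinely available. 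Once these two points are handled carefully, the remaining bookkeeping—absorbing the self-adjointness error, combining with elliptic regularity away from $\Radtsc$, and tracking the factor $(s - s')$ in the principal order—follows the template already used in \cite{BDGR}*{Sect.~7}.
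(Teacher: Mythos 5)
The paper does not prove this proposition: it is stated as a recollection of \cite{BDGR}*{Proposition~7.1} in Section~\ref{sec:tsc estimates}, which simply imports the $\tsc$-propagation estimates from that earlier work without reproducing the argument. So there is no ``paper's own proof'' to compare against here, only the cited one. That said, your positive-commutator outline is indeed the strategy used in \cite{BDGR} and throughout Vasy's treatment of radial points, and the key structural ingredients are present: a commutant of order $(s-1/2,\ell-1/2)$, the threshold sign $\ell>-1/2$, a regularization in the weight, error terms from $V-V^*$, and the roles of $B_1, G, G', B'$ as a priori/forcing/error controls.

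Two points would need significant repair before this became a proof. First, you construct the commutant by multiplying the ``symbol'' $g_\varphi$ into the quantized symbol $a$, but the entire reason $G_\varphi$ appears in the $\tsc$ setting is that its content near the poles is carried by the indicial operator $\ffsymbz(G_\varphi)(\tau)=\psi\bigl((\tau^2+H_{V_\pm}+E)^{-1}(\tau^2-H_{V_\pm})\bigr)$, a functional-calculus operator of $H_{V_\pm}$ that is not a multiplication by a scalar symbol. The commutant must be built as an operator composite, e.g.\ $A=\Op_L\bigl(b\,\rho_\fib^{-s+1/2}\rho_\infty^{-\ell+1/2}\bigr)G_\varphi$, and the positivity computation must be carried out at the level of the indicial family over $\ff$, using the algebraic identities relating $G_\psi$ to $P_V$ (the $\id - G_{\psi,V_0}=E_{\psi,V_0}P_{V_0}$ identity underlying Proposition~\ref{prop:elliptic_Gpsi_estimate}); these are not cosmetic. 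Second, the role of the pure-absolute-continuity hypothesis is misdescribed. The radial set sits at $\tau_0=\pm m$, where $\ffsymbz(P_V)(\pm m)=m^2-H_{V_\pm}$ has $0$ as a spectral threshold; what is needed quantitatively is uniformity of the indicial/semiclassical estimates up to threshold, not the elimination of ``spurious contributions.'' The gain of one weight order on the forcing side is a structural feature of any radial-point estimate and does not depend on that spectral hypothesis. Finally, the conditions $s>s'$, $\ell>\ell'>-1/2$ and the resulting bootstrap from a priori $(s',\ell')$-control of $B_1 G_{\psi_1}u$ to $(s,\ell)$-control of $Bu$ is not bookkeeping: it is exactly what makes the regularization argument legitimate, and your proposal defers it to ``the template'' rather than exhibiting the induction.
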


\begin{proposition}[Below threshold radial point estimate~\cite{BDGR}*{Proposition~7.2}]
    \label{prop:tsc_radial_below}
    Let $\delta > 0$ sufficiently small, $\varphi, \psi_1, \psi_2 \in \CcI(\RR)$ with $\supp \varphi \subset \ioo{-\delta, \delta}$ and $\psi_j|_{\ioo{-\delta, \delta}} \equiv 1$,
    and $B, E, G, G', B' \in \Psitsc^{0,0}$ such that

    \begin{enumerate}
        \item $\WFtsc'_{\ff}(E) = \varnothing$,
        \item $\WFtsc'(B G_{\varphi}) \cup \Elltsc(E) \subset \Elltsc(G)$,
        \item $\Elltsc(G) \cap \Radtsc_{\sources} = \varnothing$,
        \item $\WFtsc'(B) \subset \Elltsc(B') \cap \Elltsc(G')$,
        \item $\WFtsc'(B G_{\varphi}) \setminus \Radtsc_{\sinks}$ is backward controlled by $\Elltsc(E)$ through $\Elltsc(G)$.
    \end{enumerate}

    For all $M, N, s, \ell \in \RR$ with $\ell < -1/2$ and $u \in \Sobres$ with
    $E G_{\psi_1} u \in \Sobsc^{s,\ell}$,
    $G G_{\psi_2} P_V u \in \Sobsc^{s-1, \ell+1}$,
    $G' P_V u \in \Sobsc^{s-2, \ell}$,
    and $B' u \in \Sobsc^{s-1, \ell-r}$,
    it follows that $B u \in \Sobsc^{s,\ell}$ and
    \begin{align*}
        \norm{B u}_{s,\ell} &\leq C \big( \norm{E G_{\psi_1} u}_{s,\ell} + \norm{G G_{\psi_2} P_V u}_{s-1, \ell+1} + \norm{G' P_V u}_{s-2, \ell} + \norm{B' u}_{s-1, \ell-r} \\
        &\phantom{\leq C \big(} + \normres{u} \big)\,.
    \end{align*}

    The same statement holds if the roles of $\Radtsc_{\sources}$ and $\Radtsc_{\sinks}$ are interchanged and forward control is used instead of backward control.
\end{proposition}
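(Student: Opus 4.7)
The plan is to prove this by a positive-commutator argument in the same vein as the Melrose--Vasy radial points estimate, adapted to the $\tsc$-calculus as in \cite{BDGR}*{Section~7}. The key analytic fact is that for the below-threshold weight $\ell < -1/2$ the radial coefficient $2\ell+1$ appearing in the commutator has the sign that allows the contribution at $\Radtsc_{\sinks}$ to be absorbed on the left-hand side, so that no \emph{a priori} above-threshold regularity of $u$ at the radial set itself is needed; regularity is instead imported from $\Elltsc(E)$ along the backward flow.

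First, I would apply Proposition~\ref{prop:elliptic_Gpsi_estimate} to reduce the estimate on $\norm{Bu}_{s,\ell}$ to an estimate on $\norm{B G_\varphi u}_{s,\ell}$ modulo the right-hand-side terms $\norm{G' P_V u}_{s-2,\ell}$, $\norm{B' u}_{s-1,\ell-r}$, and $\normres{u}$. The point of the $G_\varphi$-localization is to restrict attention to a small $\tsc$-neighborhood of the characteristic set near $\Radtsc$, where the semiclassical scattering structure of the indicial family $\ffsymbz(P_V)(\tau)$ at $\tau = \pm m$ is accessible.

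Second, I would construct a commutant $A = \Op_L(a) \in \Psitsc^{s-1/2,\ell+1/2}$ whose symbol $a$ is elliptic on $\WFtsc'(BG_\varphi)$, is supported in $\Elltsc(G)$, and is arranged so that under the backward Hamilton flow its support is transported into $\Elltsc(E)$; the portion of $\WFtsc'(BG_\varphi)$ lying inside $\Radtsc_{\sinks}$ is precisely the piece handled by the radial mechanism, while the rest is covered by the backward-control hypothesis. I would then compute $\tfrac{i}{2}[A^*A\,G_\varphi^*G_\varphi, P_V]$ and, to leading order, decompose it into a Hamilton-flow term $G_\varphi^* a \, \Hamsc_p a\, G_\varphi$, a radial-set term whose coefficient is proportional to $(2\ell+1)$ and is concentrated at $\Radtsc_{\sinks}$, and subprincipal/error contributions. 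Pairing the commutator against $u$ and applying the standard identity $2\operatorname{Re}\langle A^*A G_\varphi^*G_\varphi u, P_V u\rangle = \langle i[A^*A G_\varphi^*G_\varphi,P_V]u,u\rangle + 2\operatorname{Re}\langle A^*A G_\varphi^*G_\varphi u, (P_V-P_V^*)u\rangle$, the Hamilton-flow piece is controlled by $\norm{EG_{\psi_1}u}_{s,\ell}$ via the propagation hypothesis; the radial-set piece is absorbed on the left using $2\ell+1 < 0$; the $P_V$-action contribution is bounded by $\norm{GG_{\psi_2} P_V u}_{s-1,\ell+1}$; and the corrections from $V-V^*\in\Psitsc^{0,-2}$ and $V-V_\pm\in\Psitsc^{1,-r}$, together with lower-order commutator errors, are absorbed into $\norm{B' u}_{s-1,\ell-r}$ using the asymptotically-static condition $r\geq 1$ and the ellipticity of $B'$ on $\WFtsc'(B)$. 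As usual, the commutant must be regularized by a family $A_\delta \to A$ (a Friedrichs-type mollifier in the $\tsc$-calculus) to justify the otherwise formal pairings, with all bounds uniform in $\delta$.

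The main obstacle is the management of the commutator over the poles, where the $\tsc$-radial set sits inside $W^\perp$ and the computation becomes a commutator between the semiclassical scattering indicial operator $\ffsymbz(P_V)(1/h)$ and a radial localizer at $\tau=\pm m$ with semiclassical parameter $h = 1/|\tau|$. Closing the estimate requires the precise compatibility of the cutoffs $G_\varphi, G_{\psi_1}, G_{\psi_2}$ dictated by $\supp\varphi \subset (-\delta,\delta) \subset \{\psi_j \equiv 1\}$, and careful use of Proposition~\ref{prop:elliptic_regularity_Gpsi} together with Lemma~\ref{lem:WF_QGpsi} to control the operator wavefront sets of the compositions $Q G_\psi$ appearing throughout. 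Once the symbol calculus at $\ff_\pm$ is pinned down, the integration of the positive commutator inequality and absorption of error terms are standard. The variant with $\Radtsc_{\sources}$ and $\Radtsc_{\sinks}$ interchanged and forward control follows from the identical argument after reversing the direction of the Hamilton flow.
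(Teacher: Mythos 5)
The paper does not prove this proposition: it is explicitly recalled from \cite{BDGR}*{Proposition~7.2}, and Section~\ref{sec:tsc estimates} states it without a proof block, so there is no in-paper argument to compare against line by line. That said, your sketch follows the positive-commutator strategy that underlies the cited result, and the essential sign observation --- that the radial coefficient $2\ell+1 < 0$ for below-threshold $\ell < -1/2$ lets the contribution at $\Radtsc_{\sinks}$ be absorbed on the favourable side, while the remainder of $\WFtsc'(BG_\varphi)$ is controlled by propagation from $\Elltsc(E)$ via hypothesis (5) --- is the right mechanism. The reduction via Proposition~\ref{prop:elliptic_Gpsi_estimate}, the role of $G_\varphi$ in confining the analysis to a lens of frequencies near the characteristic set, the regularization of the commutant, and the remark about the forward/backward reversal are all on target.

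Two things you elide that are in fact load-bearing. First, your description of the commutant --- ``under the backward Hamilton flow its support is transported into $\Elltsc(E)$'' --- does not capture what is actually arranged: the symbol $a$ must be constructed so that $\Hamsc_p a^2$ splits into a definite-sign bulk term plus a remainder microsupported in $\Elltsc(E)$, which is a \emph{monotonicity} condition along the flow near $\Radtsc$, not merely a statement about where the support ends up. Second, and more substantively, the estimate at the poles hinges on the standing spectral hypothesis from the preamble to Section~\ref{sec:tsc estimates}: $H_{V_\pm}$ has purely absolutely continuous spectrum on $[m^2,\infty)$, in particular no threshold eigenvalue at $m^2$. The commutator with the indicial family $\ffsymbz(P_V)(\tau) = \tau^2 - H_{V_\pm}$ near $\tau = \pm m$ only yields the required positivity (and the resulting operator-valued symbolic control at $\ff_\pm$) under this assumption; without invoking it your pairing argument does not close at the $\ff$-face. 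You flag the pole region as ``the main obstacle'' and correctly identify the semiclassical scattering structure there, but you never say what property of the indicial operator you are using, which is precisely the place the spectral input enters.
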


\section{Construction of the Feynman propagator}\label{sec:tsc prop construction}

We now construct the Feynman propagator.  We follow the general
structure of the construction in Section \ref{sec:scattering section},
which is to say that we begin with the construction of a Fredholm
problem for $P_V$.  The spaces involved in this construction are
similar to those in Section \ref{sec:scattering section} in the sense
that they have an overall regularity and below threshold weight with
the assumption of above threshold weight imposed at the radial sources
using a microlocalizer, the primary difference here being that we must now use
the $\tsc$-microlocalizers discussed above.

We choose $\delta > 0$,
$Q_{\sources} \in \Psitsc_{\sources,\delta}$,
$Q_{\sources}' \in \Psitsc_{\sources, 2 \delta}$ and cutoff functions $\phi, \psi \in \CcI(\RR)$ be bump functions supported near
$0$ with $\supp \phi \subset (-\delta, \delta)$, $\supp \psi \subset
(-2 \delta, 2 \delta)$, and $\psi(s) \equiv 1$
on $(-\delta, \delta)$.  ($\Psitsc_{\sources,\delta}$ is from
Definition \ref{def:localizer-to-src}.) We assume
\begin{equation*}
\WFtsc'(Q_{\sources} G_{\phi}) \subset \Elltsc(Q_{\sources}').
\end{equation*}
Moreover, we require, as in the
scattering case in Section \ref{sec:microlocal cutoffs scattering},
that the segments of broken bicharacteristic rays with endpoints in
$\WFtsc'(Q_{\sources} G_{\phi})$ lie in $\Elltsc(Q_{\sources}')$.

We additionally choose a $B_{\sources} \in \Psitsc^{0,0}$ with $\Rad_{\ff,\sources} \subset \Ell_{\ff}(B_{\sources})$.
For $s, \ell_0, \ell_+ \in \RR$, we define the Feynman--Sobolev spaces as
\begin{align*}
    \cY^{s,\ell_0,\ell_+} &\coloneqq \set*{ v \in \Sobsc^{s-1, \ell_0+1} \colon Q_{\sources}' G_\psi v \in \Sobsc^{s-1, \ell_+ + 1}, B_{\sources} v \in \Sobsc^{s-2, \ell_+}}\,,\\
    \cX^{s,\ell_0,\ell_+} &\coloneqq \set*{ u \in \Sobsc^{s,\ell_0} \colon Q_{\sources} G_\phi u \in \Sobsc^{s,\ell_+}, P_V u \in \cY^{s,\ell_0, \ell_+}}
\end{align*}
with norms
\begin{align*}
    \norm{v}_{\cY^{s,\ell_0,\ell_+}}^2 &\coloneqq \norm{v}^2_{s-1,\ell_0 + 1} + \norm{Q_{\sources}' G_\psi v}^2_{s - 1,\ell_+ + 1} + \norm{B_{\sources} v}^2_{s-2, \ell_+}\,,\\
    \norm{u}_{\cX^{s,\ell_0, \ell_+}}^2 &\coloneqq \norm{u}_{s,\ell_0}^2 + \norm{Q_{\sources} G_\phi}_{s,\ell_+}^2 + \norm{P_V u}_{\cY^{s,\ell_0, \ell_+}}^2\,.
\end{align*}

In particular, this means that $u$ is supposed to have above threshold regularity at $\Rad_{\sources}$ and below threshold regularity at $\Rad_{\sinks}$.
We have to include the $B_{\sources} v$ term in the
$\cY^{s,\ell_0,\ell_+}$ space because the above threshold radial set estimate near the poles
requires control of $B_{\sources} P_V u$ with a $\tsc$-elliptic
operator and the operator $Q_{\sources} G_\psi$ is not $\tsc$-elliptic there.
If we interchange the roles of $\Rad_{\sources}$ and $\Rad_{\sinks}$, then the resulting spaces give the Fredholm problem associated to the anti-Feynman propagator.

As in \cite{BDGR} we have two results, which differ by the assumption
on bound states and the decay of the non-static part of the
potential.  We treat the case where there are no bound states for the
limiting Hamiltonians in this section and the case with bound states
in Section \ref{sec:bound states}.

\begin{theorem}\label{thm:invertible_no_bound}
    Let $s, \ell_0, \ell_+, r \in \RR$ with $\ell_0 < -1/2 < \ell_+<1/2$, and $r \geq \max\set{1, \ell_+ - \ell_0}$.
    Let $V \in \rho_{\mf} \Psitsc^{1,0}$ be an asymptotically static perturbation of order $r$ and
    the limiting Hamiltonians $H_{V_{\pm}} = \Delta + m^2 + V_{\pm}$ have purely absolutely continuous spectrum near $[m^2, \infty)$, have no bound states, and
    the leading part of $V$ is self-adjoint in the sense that
    \begin{align*}
        V - V^* \in \Psitsc^{0,-2}\,,
    \end{align*}
    then the map
    \begin{align}\label{eq:fred map tsc}
        P_V : \cX^{s,\ell_0, \ell_+} \to \cY^{s,\ell_0, \ell_+}
    \end{align}
    is Fredholm.

    If we assume that $V \in \rho_{\mf} \Difftsc^{1}$ and $V = V^*$, then $P_V$ is invertible.
\end{theorem}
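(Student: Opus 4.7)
The plan is to follow the standard Vasy-type Fredholm strategy: assemble the propagation estimates of Section~\ref{sec:tsc estimates} into a global estimate of the form
\begin{equation*}
\norm{u}_{\cX^{s,\ell_0,\ell_+}} \leq C\bigl(\norm{P_V u}_{\cY^{s,\ell_0,\ell_+}} + \normres{u}\bigr),
\end{equation*}
for $M,N$ large enough, prove the analogous estimate for the adjoint problem on the dual space (where the roles of $\Radtsc_{\sources}$ and $\Radtsc_{\sinks}$ are swapped), and then invoke the compact embedding $\cX^{s,\ell_0,\ell_+}\hookrightarrow \Sobres$ to conclude Fredholmness. Boundedness of~\eqref{eq:fred map tsc} is immediate from the definitions.

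For the global estimate I would proceed microlocally. On the elliptic set of $P_V$ I use Proposition~\ref{prop:elliptic_Gpsi_estimate} (with $\varphi$ supported in $(-\delta,\delta)$) to reduce estimation of $u$ to estimation of $Q_{\sources}G_\varphi u$ and $G' P_V u$, plus an error $\norm{B' u}_{s-1,\ell-r}$ that is absorbed into $\normres{u}$ using $r\geq \max\{1,\ell_+-\ell_0\}$. At the radial sources I apply the above-threshold estimate (Proposition~\ref{prop:tsc_radial_above}); this is where $\ell_+>-1/2$ and the presence of the $B_{\sources}v$ term in $\norm{\cdot}_{\cY}$ are essential, since $Q_{\sources}'G_\psi$ is not $\tsc$-elliptic over $\poles$ while the radial estimate there demands elliptic control of $B_{\sources}P_V u$. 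At the radial sinks I use the below-threshold estimate (Proposition~\ref{prop:tsc_radial_below}), which requires only below-threshold a priori control. The two radial-set estimates are then linked along bicharacteristics of $\Hamsc_p$ by iterated applications of the regular propagation estimate (Proposition~\ref{prop:propagation_localized}), using that $\WFtsc'(Q_{\sources}G_\phi)$ is flow-convex and contained in $\Elltsc(Q_{\sources}')$. The no-bound-state and absolutely continuous spectrum hypotheses ensure the indicial operators $\ffsymbz(P_V)(\tau)$ are invertible for all $\tau\in W^\perp$, which is needed to close the estimates at $\ff_\pm$.

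Finite dimensionality of $\ker(P_V)$ and closed range now follow from the standard Peetre argument: the unit ball of $\ker(P_V)$ in $\cX^{s,\ell_0,\ell_+}$ is precompact in $\Sobres$, so the global estimate forces it to be compact in $\cX^{s,\ell_0,\ell_+}$. The cokernel is identified with $\ker(P_V^*)$ acting on the dual space $(\cY^{s,\ell_0,\ell_+})'$, whose elements enforce above-threshold decay at $\Radtsc_{\sinks}$ (the dual role of $Q_{\sources}'G_\psi$). Since Propositions~\ref{prop:tsc_radial_above}--\ref{prop:tsc_radial_below} apply symmetrically under the exchange of sources and sinks (combined with the anti-Feynman/backward direction of propagation for the adjoint under $V=V^*$ modulo lower order), the same estimate applies to $P_V^*$, and finite dimensionality of the cokernel follows.

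For invertibility under the stronger hypothesis $V\in\rho_{\mf}\Difftsc^{1}$, $V=V^*$, the main task is to show $\ker(P_V)=\ker(P_V^*)=\{0\}$. The strategy is a bootstrap-plus-pairing argument: given $u\in\ker(P_V)\subset\cX^{s,\ell_0,\ell_+}$, use $V=V^*$ and the propagation estimates applied with roles reversed to upgrade $u$ to a space with above-threshold decay at $\Radtsc_{\sinks}$ as well, placing $u$ in $\Sobsc^{s,\ell_+}$ globally. Such a $u$ is then in the domain of $P_V$ regarded as a symmetric operator, and testing $\langle P_V u,u\rangle=0$ together with the indicial-family analysis at $\ff_\pm$---where the absence of bound states of $H_{V_\pm}$ forces the $\tau$-family $\ffsymbz(P_V)(\tau)=\tau^2-H_{V_\pm}$ to be injective on the allowed asymptotic data---yields $u\equiv 0$. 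The same argument with sources and sinks swapped handles $\ker(P_V^*)$. I expect the bootstrap upgrade of regularity to be the principal technical obstacle, because propagation from sources to sinks through $\Char(P_V)$ is the precise content of the Feynman asymptotics, and turning it into an \emph{improvement} of regularity at the sinks for kernel elements relies on a delicate pairing computation that hinges on $V=V^*$, the differential (rather than pseudodifferential) nature of $V$, and the indicial invertibility at threshold supplied by the no-bound-states hypothesis.
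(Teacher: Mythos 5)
Your Fredholm argument tracks the paper's proof closely: the global estimate assembled from elliptic, propagation, and radial-point estimates (Lemma~\ref{lem:finite-dim_kernel}), the compact-embedding argument for finite-dimensional kernel and closed range, and the dual estimate with sources and sinks interchanged for the cokernel (Lemma~\ref{lem:finite-dim_cokernel}, supported by Lemma~\ref{lem:localizer-mapping} which constructs a localizer $\tilde Q$ compatible with the dual space $(\cY^{s,\ell_0,\ell_+})'$). You also correctly identify why the $B_{\sources}v$ term must appear in $\norm{\cdot}_{\cY}$.

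The invertibility step is where a genuine gap appears. First, ``use $V=V^*$ and the propagation estimates applied with roles reversed to upgrade $u$'' is not a valid move: the propagation estimates are intrinsically directional, and the Feynman construction fixes that direction; applying them backwards to a kernel element would already require above-threshold control at $\Radtsc_{\sinks}$, which is the very thing you want to prove. What the paper actually does (Proposition~\ref{prop:smooth_kernel}, following Vasy) is a positive commutator argument: introduce a cutoff family $\chi_\eps(x)$ with $[x^2\partial_x,\chi_\eps]$ uniformly bounded in $\Psisc^{0,2\alpha}$, expand
\begin{equation*}
i[P_V,\chi_\eps]=2x^{-2\alpha}\varphi(x/\eps)^2\left(B^*B+E\right)+F'_\eps,
\end{equation*}
test against $u\in\ker P_V$, control $E u$ via Lemma~\ref{lem:global_regularity}, and let $\eps\to 0$ to extract uniform boundedness of $Bu$. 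The hypotheses $V=V^*$ and $V\in\rho_{\mf}\Difftsc^1$ are used precisely to keep $[V,\chi_\eps]$ lower order here, not for ``reversing'' propagation.

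Second, the closing step ``testing $\langle P_V u,u\rangle=0$ together with the indicial-family analysis at $\ff_\pm$ yields $u\equiv 0$'' does not close the argument: once $P_Vu=0$ the pairing is tautologically zero and carries no information, and injectivity of $\tau^2-H_{V_\pm}$ on asymptotic data does not by itself kill a global nonzero solution. The paper instead observes that once $u$ has above-threshold decay on the entire radial set it lies in a causal space $\cX^{s,\ellvar}$, and then invokes the already-established invertibility of $P_V\colon\cX^{s,\ellvar}\to\cY^{s,\ellvar}$ from \cite{BDGR}*{Theorem 8.2}. Without that reduction you would need a separate uniqueness proof for the causal problem, which your proposal does not supply. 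The cokernel is handled identically, since Lemma~\ref{lem:finite-dim_cokernel} shows it embeds in an $\cX$-space of the adjoint problem.
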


To prove that $P_V$ is a Fredholm operator, we have to show that $P_V : \cX^{s,\ell_0,\ell_+} \to \cY^{s,\ell_0, \ell_+}$ has finite dimensional kernel and cokernel.

\begin{lemma}\label{lem:finite-dim_kernel}
    Let $s, \ell_0, \ell_+, r \in \RR$ with $\ell_0 <-1/2<\ell_{+}$, and $r \geq \max\set{1, \ell_+ - \ell_0}$.
    Assume that $V \in \rho_{\mf} \Psitsc^{1,0}$ satisfies the assumptions of Theorem~\ref{thm:invertible_no_bound}.
    There exists $C > 0$ such that for all $u \in \Sobsc^{s-1, \ell_0 - 1} \cap \cX^{s,\ell_0, \ell_+}$,
    \begin{align*}
        \norm{u}_{\cX^{s,\ell_0,\ell_+}} \leq C \left( \norm{P_V u}_{\cY^{s,\ell_0, \ell_+}} + \norm{u}_{s-1, \ell_0 - 1}\right)\,.
    \end{align*}
    In particular, $\ker_{\cX^{s,\ell_0, \ell_+}} P_V$ is
    finite-dimensional and has closed range.
\end{lemma}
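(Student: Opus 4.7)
The plan is to prove the estimate by combining the propagation estimates of Section~\ref{sec:tsc estimates} over a microlocal cover of $\Ctscd$, and then to deduce finite-dimensionality of the kernel and closed range from a standard compactness argument. Since
\[
\norm{u}_{\cX^{s,\ell_0,\ell_+}}^2 = \norm{u}_{s,\ell_0}^2 + \norm{Q_{\sources} G_\phi u}_{s,\ell_+}^2 + \norm{P_V u}_{\cY^{s,\ell_0,\ell_+}}^2,
\]
with the last summand absorbed trivially, the real task is to bound the two $u$-norms by $\norm{P_V u}_{\cY^{s,\ell_0,\ell_+}}$ plus the residual $\norm{u}_{s-1,\ell_0-1}$.

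I would first control the above-threshold piece $\norm{Q_{\sources} G_\phi u}_{s,\ell_+}$ by applying the above-threshold radial point estimate, Proposition~\ref{prop:tsc_radial_above}, at each component of $\Radtsc_{\sources}$, with test operator $B$ chosen so that $\WFtsc'(B G_\phi)$ is a small neighborhood of the source containing the microsupport of $Q_{\sources} G_\phi$. The three forcing terms on the right-hand side of Proposition~\ref{prop:tsc_radial_above} are matched with the three summands of $\norm{P_V u}_{\cY^{s,\ell_0,\ell_+}}$: the $G G_{\psi_2} P_V u$ input is dominated by $\norm{Q_{\sources}' G_\psi P_V u}_{s-1,\ell_++1}$, using that $Q_{\sources}' \in \Psitsc_{\sources, 2\delta}$ is elliptic on $\WFtsc'(G)$ by construction; the $G' P_V u$ input is dominated by $\norm{B_{\sources} P_V u}_{s-2,\ell_+}$, since $B_{\sources}$ is elliptic on $\Rad_{\ff,\sources}$; and the $B' u$ weight-loss term, which has weight $\ell_+ - r \le \ell_0$ by the hypothesis $r \geq \ell_+ - \ell_0$, is absorbed into $\norm{u}_{s,\ell_0}$ (treated below) or into the residual by iteration. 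The remaining input $\norm{B_1 G_{\psi_1} u}_{s',\ell_+'}$, for some $\ell_+' \in (-1/2,\ell_+)$ and $s' < s$, is a compact perturbation of the left-hand side, absorbable by iterating the radial estimate a finite number of times.

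I would next control $\norm{u}_{s,\ell_0}$ via a finite microlocal partition of unity on $\Ctscd$ by operators $\{B_j\} \subset \Psitsc^{0,0}$, handling each region by: $\tsc$-elliptic regularity (Propositions~\ref{prop:elliptic_regularity_tsc} and~\ref{prop:elliptic_Gpsi_estimate}) on $\Elltsc(P_V)$; the regular propagation estimate (Proposition~\ref{prop:propagation_localized}) along $\Chartsc(P_V) \setminus \Radtsc$; the below-threshold radial point estimate (Proposition~\ref{prop:tsc_radial_below}) at $\Radtsc_{\sinks}$; and the bound on $\norm{Q_{\sources} G_\phi u}_{s,\ell_+}$ already obtained near $\Radtsc_{\sources}$. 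Since $\Radtsc_{\sources}$ is a global source of $\Hamsc_p$ by Theorem~\ref{thm:global flow}, the applications can be ordered so that every incoming-control input for Proposition~\ref{prop:propagation_localized} or Proposition~\ref{prop:tsc_radial_below} has already been bounded by an earlier step, with Proposition~\ref{prop:elliptic_Gpsi_estimate} used to convert $B_j G_\varphi u$ estimates into $B_j u$ estimates where needed. All $\normres{u}$ errors and the weight-loss corrections (whose weights are $\le \ell_0$ thanks to $r \geq 1$) are then dominated by $\norm{u}_{s-1,\ell_0-1}$ using the continuous inclusion $\Sobsc^{s-1,\ell_0-1} \subset \Sobsc^{-N,-M}$ for $N,M$ sufficiently large. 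The main technical obstacle is the combinatorial bookkeeping of the microlocal cutoffs: the cover $\{B_j\}$ and their parametrices $G_j, G_j', B_j'$ must be chosen so that at each step the flow-convexity hypothesis of the propagation estimates is satisfied (in particular, that every broken bicharacteristic segment with endpoints in $\WFtsc'(Q_{\sources} G_\phi) \cup \WFtsc'(B_j G_\phi)$ lies inside $\Elltsc(Q_{\sources}')$ or the corresponding enclosing elliptic set), and the scalar cutoffs $\phi, \psi, \psi_1, \psi_2$ must be nested with $\psi_j \equiv 1$ on the supports of their companions.

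Given the global estimate, the Fredholm conclusion is standard. The inclusion $\cX^{s,\ell_0,\ell_+} \hookrightarrow \Sobsc^{s-1,\ell_0-1}$ is compact because both the regularity and the weight are strictly decreased. Thus a bounded sequence $\{u_n\} \subset \ker_{\cX^{s,\ell_0,\ell_+}} P_V$ has a subsequence convergent in $\Sobsc^{s-1,\ell_0-1}$, and the estimate (with $P_V u_n = 0$) then shows that subsequence is Cauchy in $\cX^{s,\ell_0,\ell_+}$. The unit ball of the kernel is therefore compact, so the kernel is finite-dimensional. The same argument applied to a sequence $\{u_n\} \subset (\ker P_V)^\perp$ with $P_V u_n$ Cauchy in $\cY^{s,\ell_0,\ell_+}$ upgrades convergence in $\Sobsc^{s-1,\ell_0-1}$ to convergence in $\cX^{s,\ell_0,\ell_+}$, establishing closed range.
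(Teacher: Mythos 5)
Your overall strategy -- combine elliptic, real-principal-type, and radial-point estimates over a microlocal cover, then deduce Fredholm properties from the compactness of $\cX^{s,\ell_0,\ell_+}\hookrightarrow\Sobsc^{s-1,\ell_0-1}$ -- is the same as the paper's, and most of the bookkeeping you describe is consistent with it. The matching of the three forcing terms of Proposition~\ref{prop:tsc_radial_above} with the three summands of the $\cY$-norm, the role of $r\geq\ell_+-\ell_0$ in downgrading the $B'u$ weight-loss term, and the closed-range/finite-kernel argument via a convergent subsequence are all correct.

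However, there is a genuine gap in your treatment of the residual $\norm{B_1 G_{\psi_1} u}_{s',\ell'}$ produced by the above-threshold radial estimate. You assert that this is ``a compact perturbation of the left-hand side, absorbable by iterating the radial estimate a finite number of times,'' but this does not work as stated. The operator $B_1$ is required to be elliptic on $\WFtsc'(Q_{\sources}G_\phi)$, so it necessarily has \emph{larger} microsupport than $Q_{\sources}G_\phi$; the term $\norm{B_1 G_{\psi_1} u}_{s',\ell'}$ is therefore not controlled microlocally by $\norm{Q_{\sources}G_\phi u}_{s,\ell_+}$, and it is not a compact perturbation of the left-hand side. Iterating the radial estimate makes this worse, since each iteration requires a new microlocalizer with yet larger microsupport while staying above the threshold weight $-1/2$, so the sequence neither stabilizes nor produces a small factor. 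The step that is actually needed, and that the paper proves as its equation \eqref{eq:tildeB_estimate}, is a \emph{separate} auxiliary estimate: split $B_1 G_\varphi$ into a piece microsupported near $\Radtsc_{\sources}$ (absorbed by $Q_{\sources}G_\phi$ via Proposition~\ref{prop:elliptic_regularity_Gpsi}) and a piece $B_1(\id-\tilde Q)G_\varphi$ away from the sources, which is controlled by propagating \emph{backward to the sources} using the regular propagation estimate, Proposition~\ref{prop:propagation_localized}. This yields a bound on $\norm{B_1 G_\psi u}_{s-1,\ell'}$ in terms of $\norm{Q_{\sources}G_\phi u}_{s-1,\ell'}$ (same weight, strictly lower regularity), and only then does an interpolation in the $(s,\ell)$ scale absorb it into $\norm{Q_{\sources}G_\phi u}_{s,\ell_+}$. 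Without this auxiliary propagation-back-to-the-sources step, the above-threshold estimate does not close.

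A related, smaller imprecision: because your estimate for $\norm{Q_{\sources}G_\phi u}_{s,\ell_+}$ carries a $\norm{u}_{s,\ell_0}$ error and your estimate for $\norm{u}_{s,\ell_0}$ in turn uses the bound near $\Radtsc_{\sources}$, the two bounds are mutually dependent. You need to close the loop quantitatively, which is exactly where the interpolation inequality (the paper cites \cite{BDGR}*{Eq.~(2.39)}) applied to $\norm{Q_{\sources}G_\phi u}_{s,\ell_0}$ enters; merely asserting that residuals are ``dominated by $\norm{u}_{s-1,\ell_0-1}$'' does not address the absorption of the full-strength $\norm{u}_{s,\ell_0}$ term.
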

\begin{proof}
    The claimed global estimate follows, using a interpolation argument as in \cite{BDGR}*{Eq.~(2.39)} applied to $\norm{Q_{\sources} G_{\phi} u}_{s,\ell_0}$, from the two estimates
    \begin{align*}
        \norm{u}_{s,\ell_0} &\lesssim \norm{Q_{\sources} G_\phi u}_{s,\ell_0} + \norm{P_V u}_{s-1, \ell_0 + 1} + \norm{u}_{s-1, \ell_0 - 1} \\
        \norm{Q_{\sources} G_\phi u}_{s,\ell_+} &\lesssim \norm{B_{\sources} P_V u}_{s-2, \ell_+} + \norm{Q_{\sources}' G_\psi P_V u}_{s-1,\ell_+ + 1} \\
        &\phantom{\lesssim} + \norm{u}_{s, \ell_0}\,.
    \end{align*}

    The first inequality is the result of using a microlocal partition of unity together with elliptic estimates, propagation estimates and below threshold estimates.
    Near the poles, we use Proposition~\ref{prop:elliptic_regularity_tsc}, Proposition~\ref{prop:propagation_localized}, and Proposition~\ref{prop:tsc_radial_below} and away from the poles, we can use the estimates
    from the scattering calculus as in Section~\ref{sec:scattering section}.
    
    More precisely, we take an open cover $O_1, O_2, O_3, O_4$ of the compressed cotangent
    bundle $\Tdoto X$ similar as in \cite{BDGR}*{p.~83} such that 
    \begin{enumerate}
        \item $\WFtsc'(Q_{\sources}G_{\phi}) \subset O_1$,
        \item $\Rad_{\sinks} \subset O_2 \subset \WFtsc'(\id - Q_{\sources})$,
        \item $\Chartsc(P_V) \subset O_1 \cup O_2 \cup O_3$,
        \item $O_3 \cap \Chartsc(P_V)$ is controlled along $\Hamsc_p$ by $O_1$,
        \item $(O_2 \cap \Chartsc(P_V)) \setminus \Rad_{\sinks}$ is controlled along $\Hamsc_p$ by $O_3$, and
        \item $O_4 \subset \Elltsc(P_V)$.
    \end{enumerate}
    The proof that that such a cover exists is similar to the case in \cite{BDGR}.

    We also choose a collection $Q_2, Q_3, Q_4 \in \Psitsc^{0,0}$ with
    \begin{align*}
        \WFtsc'(Q_i G_{\phi}) \subset O_i
    \end{align*}
    and
    \begin{align*}
        \Ctscd \subset \Elltsc(Q_{\sources}) \cup \bigcup_{i = 2}^4 \Elltsc(Q_i)\,.
    \end{align*}

    We have the estimate
    \begin{align*}
        \norm{u}_{s,\ell_0} \lesssim \norm{Q_{\sources} u}_{s,\ell_{0}} + \norm{Q_2 u}_{s,\ell_{0}} + \norm{Q_3 u}_{s,\ell_{0}} + \norm{Q_4 u}_{s,\ell_{0}} + \norm{u}_{s-1, \ell_0-1}\,.
    \end{align*}

    By Proposition~\ref{prop:elliptic_Gpsi_estimate}, we have
    \begin{align*}
        \norm{Q_{\sources} u}_{s,\ell_0} \lesssim \norm{Q_{\sources} G_\phi u}_{s,\ell_0} + \norm{P_V u}_{s-1, \ell_0 + 1} + \norm{u}_{s-1, \ell_0 - 1}\,.
    \end{align*}
    Using the propagation estimates, Proposition~\ref{prop:propagation_localized} and \cite{BDGR}*{Proposition~2.6}, we can estimate $Q_3 u$ by the localizer near the sources, $Q_{\sources} u$, and $P_V u$.
    More precisely, we have
    \begin{align*}
        \norm{Q_3 u}_{s,\ell_0} \lesssim \norm{Q_{\sources} u}_{s,\ell_0} + \norm{P_V u}_{s-1, \ell_0 + 1} + \norm{u}_{s-1, \ell_0 -1}\,.
    \end{align*}
    The below threshold estimates, Proposition~\ref{prop:tsc_radial_below} and \cite{BDGR}*{Proposition~2.13}, imply
    \begin{align*}
        \norm{Q_2 u}_{s,\ell_0} \lesssim \norm{Q_3 u}_{s,\ell_0} + \norm{P_V u}_{s-1, \ell_0 + 1} + \norm{u}_{s-1, \ell_0 - 1}\,.
    \end{align*}
    Lastly, we use the elliptic estimate \cite{BDGR}*{Proposition~2.2} to bound
    \begin{align*}
        \norm{Q_4 u}_{s,\ell_0} \lesssim \norm{P_V u}_{s-2, \ell_0} + \normres{u} \leq \norm{P_V u}_{s-1, \ell_0 + 1} + \norm{u}_{s-1, \ell_0 - 1}\,.
    \end{align*}
    Putting the estimates together, we obtain
    \begin{align*}
        \norm{u}_{s,\ell_0} \lesssim \norm{Q_{\sources} G_\phi u}_{s, \ell_0} + \norm{P_V u}_{s-1, \ell_0 + 1} + \norm{u}_{s-1, \ell_0 - 1}\,,
    \end{align*}
    which is the first inequality.

    The second inequality is a ``global'' above threshold estimate.
    As a consequence of Proposition~\ref{prop:tsc_radial_above} with $s' = s-1$, $-N = s-1$, and $-M = \ell_0$ and using that $\ell_+ - r \leq \ell_0$, we obtain
    \begin{equation}\label{eq:Qsources_weak}
    \begin{aligned}
        \norm{Q_{\sources} G_\phi u}_{s,\ell_+} &\lesssim \norm{B_1 G_\psi u}_{s-1, \ell'} + \norm{Q_{\sources}' G_\psi P_V u}_{s-1,\ell_+ + 1} + \norm{B_{\sources} P_V u}_{s-2, \ell_+} \\
        &\phantom{\lesssim}
        + \norm{u}_{s-1, \ell_0}\,,
    \end{aligned}
    \end{equation}
    where $\ell' \in (-1/2, \ell_+)$ and for some $B_1 \in \Psitsc^{0,0}$ that satisfies $\WFtsc'(Q_{\sources} G_\phi) \subset \Elltsc(B_1)$ and
    $\WFtsc'(B_1 G_\psi) \subset \Elltsc(B_{\sources}) \cup \Elltsc(Q_{\sources}' G_\psi)$.

    To remove the $B_1 G_\psi$ term, we claim that
    \begin{equation}\label{eq:tildeB_estimate}
    \begin{aligned}
        \norm{B_1 G_\psi u}_{s-1, \ell'} &\lesssim \norm{Q_{\sources} G_\phi u}_{s-1,\ell'} + \norm{B_{\sources} P_V u}_{s-3,\ell'} + \norm{Q_{\sources}' G_\psi P_V u}_{s-2, \ell' + 1} \\
        &\phantom{\lesssim} + \norm{u}_{s-2, \ell_0}\,.
    \end{aligned}
    \end{equation}
    Combining \eqref{eq:Qsources_weak} and \eqref{eq:tildeB_estimate}, we arrive at the estimate
    \begin{align*}
        \norm{Q_{\sources} G_\phi u}_{s,\ell_+} &\lesssim \norm{B_{\sources} P_V u}_{s-2, \ell_+} + \norm{Q_{\sources}' G_\psi P_V u}_{s-1,\ell_+ + 1} \\ 
        &\phantom{\lesssim} + \norm{Q_{\sources} G_\phi u}_{s-1,\ell'} + \norm{u}_{s, \ell_0}\,.
    \end{align*}
    Using the interpolation inequality \cite{BDGR}*{Eq.~(2.39)}, we can absorb $\norm{Q_{\sources} G_\phi u}_{s-1,\ell'}$ into the left hand side.

    It remains to prove \eqref{eq:tildeB_estimate}.
    From Proposition~\ref{prop:elliptic_Gpsi_estimate} we obtain
    \begin{align*}
        \norm{B_1 G_\psi u}_{s-1, \ell'} \lesssim \norm{B_1 G_\varphi u}_{s-1, \ell'} + \norm{B_{\sources} P_V u}_{s-3, \ell'} + \norm{u}_{s-1, \ell_0}\,,
    \end{align*}
    where $\varphi \in \CcI(\RR)$ with $\varphi \phi = \varphi$ and $\varphi \equiv 1$ in a small neighborhood of $0$.
    We choose $\tilde{Q} \in \Psitsc^{0,0}$ such that $\WFtsc'(\id - \tilde{Q}) \cap \Radtsc_{\sources} = \varnothing$ and $\WFtsc'(\tilde{Q} G_{\varphi}) \subset \Elltsc(Q_{\sources})$.
    Using Proposition~\ref{prop:elliptic_regularity_Gpsi}, we further estimate
    \begin{align*}
        \norm{B_1 G_\varphi u}_{s-1, \ell'} &\lesssim \norm{ B_1 (\id - \tilde{Q}) G_\varphi u}_{s-1, \ell'} + \norm{Q_{\sources} G_\phi u}_{s-1,\ell'} \\
        &\phantom{\lesssim} + \norm{u}_{s-1, \ell_0}\,.
    \end{align*}
    Since $B_1 (\id - \tilde{Q}) G_\varphi$ is localized away from the radial sources and controlled by $Q_{\sources} G_\phi$, we obtain using Proposition~\ref{prop:propagation_localized},
    \begin{align*}
        \norm{ B_1 (\id - \tilde{Q}) G_\varphi u}_{s-1, \ell'} &\lesssim \norm{Q_{\sources} G_\phi u}_{s-1, \ell'} + \norm{Q_{\sources}' G_\psi P_V u}_{s-2, \ell' + 1} \\
        &\phantom{\lesssim} + \norm{B_{\sources} P_V}_{s-3, \ell'} + \norm{u}_{s-2, \ell_0}\,.
    \end{align*}
    Putting all of these estimates together, we arrive at \eqref{eq:tildeB_estimate}.

    The conclusion that $\ker_{\cX^{s,\ell_0, \ell_+}} P_V$ is finite-dimensional and that $P_V$ has closed range follow from a standard argument (cf. \cite{Vasy2013-2})
    because $\Sobsc^{s-1, \ell_0 - 1} \to \cX^{s,\ell_0, \ell_+}$ is compact.
\end{proof}

To complete the proof of Theorem~\ref{thm:invertible_no_bound}, we have to show that the cokernel of $P_V : \cX^{s,\ell_0,\ell_+} \to \cY^{s,\ell_0, \ell_+}$ is finite-dimensional as well.
The cokernel is given by
\begin{align*}
    \coker(P_V) = \set*{v \in (\cY^{s,\ell_0,\ell_+})' \colon \ang{P_V u, v} = 0 \text{ for all } u \in \cX^{s,\ell_0,\ell_+}}\,,
\end{align*}
which is equal to $\ker_{(\cY^{s,\ell_0,\ell_+})'}P_V^*$ since $P_V$
has closed range.

\begin{lemma}\label{lem:localizer-mapping}
    There exists $\tilde{Q} \in \Psitsc_{\sinks,\delta}$ such that
    for all $\phi' \in \CcI(\RR)$ with $\supp \phi' \subset (-\delta, \delta)$ and
    $u \in (\cY^{s,\ell_0,\ell_+})'$, we have $\tilde{Q} G_{\phi'} u \in \Sobsc^{1-s, \ell_+'}$,
    where $\ell_+' = \min\set{-\ell_0-1, -\ell_+}$.
\end{lemma}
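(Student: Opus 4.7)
The plan is to proceed by duality. Since $\cY^{s,\ell_0,\ell_+}$ is defined by three finiteness conditions on $v, Q_{\sources}' G_\psi v$, and $B_{\sources} v$, standard Hilbert space duality yields
\[
(\cY^{s,\ell_0,\ell_+})' = \Sobsc^{1-s,-\ell_0-1} + (Q_{\sources}' G_\psi)^* \Sobsc^{1-s,-\ell_+-1} + B_{\sources}^* \Sobsc^{2-s,-\ell_+}.
\]
Given $u$ in this space, I would decompose $u = u_0 + (Q_{\sources}' G_\psi)^* u_1 + B_{\sources}^* u_2$ with the obvious norm bounds, and analyze $\tilde Q G_{\phi'} u$ term by term.

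The first and third terms are immediate. Since $\tilde Q G_{\phi'} \in \Psitsc^{0,0}$ preserves scattering Sobolev spaces, $\tilde Q G_{\phi'} u_0 \in \Sobsc^{1-s,-\ell_0-1}$. Similarly, $\tilde Q G_{\phi'} B_{\sources}^* \in \Psitsc^{0,0}$ gives $\tilde Q G_{\phi'} B_{\sources}^* u_2 \in \Sobsc^{2-s,-\ell_+} \subset \Sobsc^{1-s,-\ell_+}$. The choice $\ell_+' = \min\{-\ell_0-1,-\ell_+\}$ is precisely what is needed for both pieces to embed into $\Sobsc^{1-s,\ell_+'}$.

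The content of the lemma lies in the middle term, for which I plan to show that $\tilde Q$ can be chosen so that the composition
\[
\tilde Q G_{\phi'} (Q_{\sources}' G_\psi)^* \in \Psitsc^{-\infty,-\infty},
\]
placing $\tilde Q G_{\phi'} (Q_{\sources}' G_\psi)^* u_1$ in every scattering Sobolev space. The first reduction is to use the functional calculus identity $G_{\phi'} G_\psi^* = G_{\phi'}$ modulo $\Psitsc^{-\infty,-\infty}$, which follows from $\phi' \psi = \phi'$ (since $\supp \phi' \subset (-\delta,\delta)$ and $\psi \equiv 1$ there) together with the standard composition rule for functions of the elliptic operator $(D_t^2 + H + E)^{-1} P_{V_\pm}$ used to construct the $G$-family. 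This reduces the claim to $\tilde Q G_{\phi'} (Q_{\sources}')^* \in \Psitsc^{-\infty,-\infty}$. I would construct $\tilde Q$ via the recipe of Lemma~\ref{lem:there-are-localizers}, with its underlying function $f_{\sinks}$ on $\Wperpo$ supported in small neighborhoods of $\Rad_{\ff,\sinks}$, namely near $\{+m\}$ at $\NP$ and $\{-m\}$ at $\SP$, taken disjoint from the $\tau$-supports of the analogous data for $Q_{\sources}'$. Lemma~\ref{lem:WF_QGpsi} then gives that at each pole the indicial operators of $\tilde Q$ and $(Q_{\sources}')^*$ have disjoint $\tau$-supports, so the indicial operator of the product vanishes. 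At $\mf$ and at fiber infinity, the symbols of $\tilde Q$ and $Q_{\sources}'$ microlocalize to small disjoint neighborhoods of $\Rad_{\sinks}$ and $\Rad_{\sources}$ on $\Char(P_V)$, while $G_{\phi'}$ localizes to the characteristic set, forcing the principal symbol of the composition to vanish there as well.

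The main obstacle is upgrading this vanishing of the principal symbol to vanishing to infinite order, so that the composition is genuinely residual rather than merely one order better. I would handle this by iterating the above disjointness argument on the successive terms of the standard $\tsc$-symbol expansion: both $\tilde Q$ and $Q_{\sources}'$ can be constructed so that their full symbols are supported in nested disjoint neighborhoods in $\Ttsco^*[X;\poles]$, and each symbolic term in the composition expansion is then a derivative of a product of symbols with disjoint supports, hence vanishes. Combined with the indicial and microlocal vanishing above, this yields $\tilde Q G_{\phi'} (Q_{\sources}')^* \in \Psitsc^{-\infty,-\infty}$ and completes the proof.
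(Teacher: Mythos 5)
Your proof is correct and takes essentially the same route as the paper's. The paper works with the adjoint map, showing $G_{\phi'}\tilde{Q}^*:\Sobsc^{s-1,-\ell_+'}\to\cY^{s,\ell_0,\ell_+}$ directly; you instead decompose $(\cY^{s,\ell_0,\ell_+})'$ as a three-term sum and apply $\tilde Q G_{\phi'}$ term by term --- these are dual formulations of the same estimate, and both reduce the lemma to a single microlocal fact, namely that $\tilde Q G_{\phi'}$ and $Q_{\sources}' G_\psi$ have disjoint $\tsc$-operator wavefront sets, so the key composition is residual.

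Two remarks on streamlining. First, the entire final paragraph --- iterating the disjoint-support argument through the asymptotic expansion to upgrade from principal-symbol vanishing to full residuality --- re-derives what is already Lemma~\ref{lem:WFtsc_operator}: $\WFtsc'(AB)\subset\WFtsc'(A)\cap\WFtsc'(B)$, and $\WFtsc'(A)=\varnothing$ forces $A\in\Psitsc^{-\infty,-\infty}$. Once you have arranged $\WFtsc'(\tilde Q G_{\phi'})\cap\WFtsc'(Q_{\sources}'G_\psi)=\varnothing$, the regularizing property is immediate; the iterated symbol-support analysis is not wrong, but it is superfluous and presented as the ``main obstacle'' when it is the most routine step. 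Second, the reduction $G_{\phi'}G_\psi^* = G_{\phi'}$ modulo $\Psitsc^{-\infty,-\infty}$ is a valid functional-calculus observation but is not needed: one can work directly with the wavefront set of $Q_{\sources}'G_\psi$ (as the paper does) rather than peeling off the $G_\psi$ factor first. One point worth making explicit in either write-up: it is the condition $Q_{\sources}'\in\Psitsc_{\sources,2\delta}$ (so $\WFtsc'(Q_{\sources}'G_\psi)$ is a \emph{closed} set avoiding the closed set $\Radtsc_{\sinks}$) that guarantees there is an open neighborhood of $\Radtsc_{\sinks}$ in which to fit $\WFtsc'(\tilde Q G_{\phi'})$; this is what makes the disjointness achievable. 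You gesture at this with the ``nested disjoint neighborhoods'' language, which is adequate.
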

\begin{proof}
    Since $Q_{\sources}' \in \Psitsc_{\sources, \delta}$, we can construct the following operator $\tilde{Q}$:

    We choose $f \in \CcI(W^{\perp})$ such that 
    \begin{align*}
        f|_{\WF_{\ff}'(Q_{\sources}')^c} &= 0\,,\\
        f \rvert_{W^{\perp}_{+}}(\tau) &= f \rvert_{W^{\perp}_{-}} (-\tau) = 1
    \end{align*}
    for $\tau \in (m - \eps, m + \eps)$ and $\eps > 0$ sufficiently small.
    Moreover, we choose $\tilde{q} \in \CI([\Tsco^*X;\fibeq])$ such that
    $\tilde{q} = 1$ near $\Rad_{\sinks}$, $\tilde{q} = 0$ on $\WFtsc'(Q_{\sources}) \setminus \Wperpo$ and
    $\tilde{q}|_{\poles}(\tau, \zeta) = f(\tau)$.

    We set
    \begin{align*}
        \tilde{Q} \coloneqq \Op_L(\tilde{q}) G_\varphi + (\id - G_{\varphi})\,,
    \end{align*}
    which is an element in $\Psitsc_{\sinks, \delta}$ by construction.

    To show that $\tilde{Q} G_{\phi'}$ maps $(\cY^{s,\ell_0,\ell_+})'$ to $\Sobsc^{1-s,\ell_+'}$, we show that the adjoint $G_{\phi'} \tilde{Q}^*$ maps
    $\Sobsc^{s-1, -\ell_+'}$ to $\cY^{s,\ell_0,\ell_+}$.  
    Let $v \in \Sobsc^{s-1, -\ell_+'}$, we have to show that
    \begin{align*}
        G_{\phi'} \tilde{Q}^* v &\in \Sobsc^{s-1, \ell_0+1}\,,\\
        Q_{\sources}' G_\psi G_{\phi'} \tilde{Q}^* v &\in \Sobsc^{s-1, \ell_+ + 1}\,,\\
        B_{\sources} G_{\phi'} \tilde{Q}^* v &\in \Sobsc^{s-2, \ell_+}\,.
    \end{align*}
    Since $-\ell_+' \geq \ell_0 + 1$ and $-\ell_+' \geq \ell_+$, the first and the third property are trivially satisfied.
    Finally, we have that 
    \begin{align*}
        \WFtsc'(\tilde{Q} G_{\phi'}) \cap \WFtsc'(Q_{\sources}' G_\psi) = \varnothing
    \end{align*}
    by construction of $\tilde{Q}$ and therefore $Q_{\sources}' G_\psi G_{\phi'} \tilde{Q}^*$ is regularizing.
\end{proof}

\begin{lemma}\label{lem:finite-dim_cokernel}
    Let $s, \ell_0, \ell_+, r \in \RR$ with $\ell_0 < -1/2 < \ell_{+} < 1/2$, and $r \geq \max\set{1, \ell_+ - \ell_0}$.
    If $V \in \rho_{\mf} \Psitsc^{1,0}$ is an admissible asymptotically static perturbation of order $r$, then the kernel of $P_V^*$, $\ker_{(\cY^{s,\ell_0,\ell_+})'}P_V^*$, is finite-dimensional.
\end{lemma}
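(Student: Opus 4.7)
The plan is to mirror the argument of Lemma~\ref{lem:finite-dim_kernel}, with the roles of $\Radtsc_{\sources}$ and $\Radtsc_{\sinks}$ swapped, applied to $P_V^*$ acting on the dual space $(\cY^{s,\ell_0,\ell_+})'$. The operator $P_V^*$ differs from $P_V$ only by an element of $\Psitsc^{1,-2}$ (since $V - V^* \in \Psitsc^{0,-2}$ and the Minkowski d'Alembertian is formally self-adjoint modulo lower order terms coming from the $\tsc$-metric correction in \eqref{eq:metric assumption}), and thus has the same principal symbol, the same characteristic set, and the same Hamilton flow as $P_V$. However, the below-threshold/above-threshold dichotomy at radial points is governed by the sign of a subprincipal-type quantity that flips under adjunction, so that for $P_V^*$ the sinks $\Radtsc_{\sinks}$ of $\Hamsc_p$ behave as ``source-type'' radial points in the sense that Proposition~\ref{prop:tsc_radial_above} and Proposition~\ref{prop:tsc_radial_below} apply there with the roles of sources/sinks interchanged.

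First, I would verify that any $v \in (\cY^{s,\ell_0,\ell_+})'$ automatically enjoys above-threshold microlocal regularity at $\Radtsc_{\sinks}$. Unpacking the three norms in the definition of $\cY^{s,\ell_0,\ell_+}$ by duality, we identify $(\cY^{s,\ell_0,\ell_+})'$ with the sum space
\begin{equation*}
  \Sobsc^{1-s,-\ell_0-1} + G_\psi^* (Q_{\sources}')^* \Sobsc^{1-s,-\ell_+-1} + B_{\sources}^* \Sobsc^{2-s,-\ell_+}\,,
\end{equation*}
so that each $v$ in the dual lies \emph{a priori} in the global below-threshold space $\Sobsc^{1-s,-\ell_0-1}$. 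Applying Lemma~\ref{lem:localizer-mapping} yields a $\tilde Q \in \Psitsc_{\sinks,\delta}$ such that $\tilde Q G_{\phi'} v \in \Sobsc^{1-s,\ell_+'}$ with $\ell_+' = \min\{-\ell_0-1,-\ell_+\} > -1/2$; thus $v$ is above threshold at $\Radtsc_{\sinks}$ in the manifest microlocal sense compatible with the Feynman-type radial estimates.

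Next, I would establish the analogue of the global estimate in Lemma~\ref{lem:finite-dim_kernel} for $P_V^*$ with the sources and sinks interchanged: for some compactly embedded residual norm $\normres{\cdot}$,
\begin{equation*}
  \norm{v}_{1-s,-\ell_0-1} + \norm{\tilde Q G_{\phi'} v}_{1-s,-\ell_+-1} \;\lesssim\; \norm{P_V^* v}_{\cY^*\text{-dual}} + \norm{v}_{-s,-\ell_0-2}\,.
\end{equation*}
The construction follows the same microlocal partition strategy: cover $\Ctscd$ by opens $O_1', \dots, O_4'$ with $\WFtsc'(\tilde Q G_{\phi'}) \subset O_1'$, $\Radtsc_{\sources} \subset O_2'$, $O_3'$ covering the propagation region along the \emph{reversed} Hamilton flow connecting $O_2'$ to $O_1'$, and $O_4'$ in the elliptic set. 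One then combines Proposition~\ref{prop:elliptic_regularity_tsc} and Proposition~\ref{prop:elliptic_Gpsi_estimate} on $O_4'$, the propagation estimate Proposition~\ref{prop:propagation_localized} along $-\Hamsc_p$ on $O_3'$, the below-threshold radial estimate Proposition~\ref{prop:tsc_radial_below} at $\Radtsc_{\sources}$ (now playing the role of sinks for $P_V^*$), and the above-threshold radial estimate Proposition~\ref{prop:tsc_radial_above} at $\Radtsc_{\sinks}$. The interpolation argument from \cite{BDGR}*{Eq.~(2.39)} absorbs intermediate-order terms exactly as in the proof of Lemma~\ref{lem:finite-dim_kernel}. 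Once the estimate is in place, the compact inclusion of $\Sobsc^{1-s,-\ell_0-1}$ into $\Sobsc^{-s,-\ell_0-2}$, combined with the standard functional-analytic argument (cf.~\cite{Vasy2013-2}), yields finite-dimensionality of $\ker_{(\cY^{s,\ell_0,\ell_+})'} P_V^*$.

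The main obstacle I anticipate is bookkeeping the adjunction: one must check that the dual space $(\cY^{s,\ell_0,\ell_+})'$ is genuinely adapted to the reversed-flow Feynman problem, which is the point of Lemma~\ref{lem:localizer-mapping}. The choice of auxiliary cutoffs $B_1, \tilde Q$ in the above-threshold radial estimate at $\Radtsc_{\sinks}$ must be compatible with the three-component norm on $\cY^{s,\ell_0,\ell_+}$ so that when the adjoint estimate is unwound, each $P_V^* v$ term appears as a pairing against one of the three summands in $\cY^{s,\ell_0,\ell_+}$; in particular, the $B_{\sources}$ term in the definition of $\cY^{s,\ell_0,\ell_+}$ is exactly what is needed to control the above-threshold radial estimate's $B'$-term after adjunction, which is the reason it was included in the first place.
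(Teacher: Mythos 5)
Your high-level strategy --- dualize to obtain an anti-Feynman problem for $P_V^*$, use Lemma~\ref{lem:localizer-mapping} to get above-threshold regularity at $\Radtsc_{\sinks}$, then apply the Lemma~\ref{lem:finite-dim_kernel} machinery with sources and sinks swapped --- is the same as the paper's. However, there is a concrete error in your identification of the global decay weight for the dual space, and it propagates into the Fredholm estimate you propose.

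You write that each $v \in (\cY^{s,\ell_0,\ell_+})'$ lies ``a priori in the global below-threshold space $\Sobsc^{1-s,-\ell_0-1}$.'' This is wrong on two counts. First, since $\ell_0 < -1/2$, the weight $-\ell_0 - 1 > -1/2$ is \emph{above} threshold, not below. Second, and more importantly, a general element of a sum space lies only in the largest (weakest-decay) summand, not in the smallest. What actually holds is $\Sobsc^{s-1,\ell_+ + 1} \subset \cY^{s,\ell_0,\ell_+}$ (since $\ell_+ + 1 > \ell_0 + 1$ and the $Q_{\sources}'G_\psi$ and $B_{\sources}$ conditions are automatic on that space), and dualizing this inclusion gives $(\cY^{s,\ell_0,\ell_+})' \subset \Sobsc^{1-s,-\ell_+-1}$. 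The weight $-\ell_+ - 1 < -1/2$ is the correct global below-threshold weight; in the paper's notation this is $\ell_0' = -\ell_+ - 1$. Your proposed global estimate compounds the error by pairing the above-threshold weight $-\ell_0 - 1$ with the global norm and the below-threshold weight $-\ell_+-1$ with the microlocalized term $\norm{\tilde Q G_{\phi'} v}$ --- exactly backwards. As written, the left-hand side of your estimate is not a priori finite for $v$ in the dual, so the argument does not close. The correct weights are $\ell_0' = -\ell_+ - 1$ globally and $\ell_+' = \min\{-\ell_0-1,-\ell_+\}$ on the localized term (you had $\ell_+'$ right earlier in the proof when invoking Lemma~\ref{lem:localizer-mapping}, so this is an internal inconsistency).

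One further remark: your claim that the above/below-threshold dichotomy ``is governed by the sign of a subprincipal-type quantity that flips under adjunction'' is not the right explanation. The threshold value $-1/2$ is the same for $P_V$ and $P_V^*$ (indeed $V-V^*\in\Psitsc^{0,-2}$ is subleading); what changes is simply that dualizing the Sobolev weights converts an ``above-threshold at sources, below at sinks'' condition into ``below at sources, above at sinks,'' which is the anti-Feynman problem. This is a statement about weights on the dual spaces, not about a sign in the subprincipal symbol. Once the weight identification is corrected, the paper's cleaner route --- define $\tilde\cX,\tilde\cY$ with the $\sinks$-localizers and parameters $\ell_0', \ell_+'$, show $\ker_{(\cY^{s,\ell_0,\ell_+})'}P_V^* \subset \tilde\cX$ via the duality inclusion and Lemma~\ref{lem:localizer-mapping}, and invoke Lemma~\ref{lem:finite-dim_kernel} directly --- avoids re-deriving the global estimate at all.
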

\begin{proof}
    Set $\ell_+' \coloneqq \min\set{-\ell_0 - 1, - \ell_+}$ and $\ell_0' = -\ell_+ - 1$. By assumption, $\ell_+' > -1/2$ and $\ell_0' < -1/2$
    and we note that $r \geq \ell_+' - \ell_0'$. Therefore, $s' = 1-s, \ell_0', \ell_+'$, and $r$ satisfy the assumptions of Lemma~\ref{lem:finite-dim_kernel}.

    We choose $\tilde{Q} \in \Psitsc_{\sinks,\delta}$ as in the previous lemma,
    $\tilde{Q}' \in \Psitsc_{\sinks, 2\delta}$ with $\WFtsc'(\tilde{Q}) \subset \Elltsc(\tilde{Q}')$ and as in Lemma~\ref{lem:finite-dim_kernel}.
    Moreover, we choose a $\tilde{B}_{\sinks} \in \Psitsc^{0,0}$ with $\Rad_{\ff,\sinks} \subset \Elltsc(\tilde{B}_{\sinks})$.
    We set
    \begin{align*}
        \tilde{\cY} &\coloneqq \set*{v \in \Sobsc^{-s,\ell_0'+1} \colon \tilde{Q}' G_{\psi} v \in \Sobsc^{-s, \ell_+' + 1}, \tilde{B}_{\sinks} v \in \Sobsc^{-s-1,-\ell_+'} }\,,\\
        \tilde{\cX} &\coloneqq \set*{u \in \Sobsc^{1-s,\ell_0'} \colon \tilde{Q} G_\phi u \in \Sobsc^{1-s,\ell_+'}, P_V^* u \in \tilde{\cY}}
    \end{align*}
    We claim that $\ker_{(\cY^{s,\ell_0,\ell_+})'}P_V^* \subset \tilde{\cX}$.
    We have to show that for $u \in (\cY^{s,\ell_0,\ell_+})'$ with $P_V^* u = 0$, we have that
    \begin{align*}
        u &\in \Sobsc^{1-s,\ell_0'}\,,\\
        \tilde{Q} G_\phi u &\in \Sobsc^{1-s,\ell_+'}\,.
    \end{align*}
    The first inclusion follows by duality from $\Sobsc^{s-1, - \ell_0'} = \Sobsc^{s-1,\ell_+ + 1} \subset \cY^{s,\ell_0,\ell_+}$ and the second follows from the previous lemma.

    The claim now follows from Lemma~\ref{lem:finite-dim_kernel} with $P_V^* : \tilde{\cX} \to \tilde{\cY}$ and
    interchanging the roles of $\Rad_{\sources}$ and $\Rad_{\sinks}$.
\end{proof}

If $P_V u \in \CdI(X)$, then we have the following regularity result, which is an immediate consequence of the estimates in Section~\ref{sec:tsc estimates}.
\begin{lemma}\label{lem:global_regularity}
    Let $s, \ell_0, \ell_+, r \in \RR$ with $\ell_0 < -1/2 < \ell_+$, and $r \geq \max\set{1, \ell_+ - \ell_0}$.
    Let $V \in \rho_{\mf} \Psitsc^{1,0}(X)$ be an admissible asymptotically static perturbation of order $r$ and assume that $H_{V_{\pm}}$ have no bound states.
    If $u \in \cX^{s,\ell_0, \ell_+}$ and $P_V u \in \CdI(X)$, then
    for any $\delta > 0$,
    \begin{align*}
        u \in \Sobsc^{\infty, -1/2 - \delta}\,,\text{ and }
        A u \in \Sobsc^{\infty, -1/2 - \delta + r}
    \end{align*}
    provided that $A \in \Psitsc^{0,0}$ satisfies $\WFtsc'(A) \cap \Radtsc_{\sinks} = \varnothing$.
\end{lemma}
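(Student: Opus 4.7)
The plan is to bootstrap three times: first in the Sobolev order $s$, then in the global weight, then in the weight away from the sinks. Throughout, the fact that $P_V u \in \CdI(X)$ means that every term in the propagation, elliptic, and radial-point estimates involving $P_V u$ is finite at any order and weight, so the bootstrap is only limited by the a priori regularity of $u$ itself and by the thresholds $\pm 1/2$ at the radial sets.

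\textbf{Step 1 (regularity to $s = \infty$).} I would use essentially the same microlocal partition of unity as in the proof of Lemma~\ref{lem:finite-dim_kernel}: cover $\Ctscd$ by (i) an elliptic region handled by Proposition~\ref{prop:elliptic_regularity_tsc}, (ii) the source region handled by Proposition~\ref{prop:tsc_radial_above}, (iii) the sink region handled by Proposition~\ref{prop:tsc_radial_below}, and (iv) a propagation region between them handled by Proposition~\ref{prop:propagation_localized}. Starting from $u\in\cX^{s,\ell_0,\ell_+}$, I iterate $s\mapsto s+1/2$, say: at each stage the $B'u$ and $EG_{\psi}u$ terms on the right-hand sides of the estimates are controlled by the previous stage, while the $P_V u$ terms are Schwartz. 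Iterating produces $u\in\Sobsc^{\infty,\ell_0}$ and $Q_{\sources}G_\phi u\in\Sobsc^{\infty,\ell_+}$.

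\textbf{Step 2 (global weight to $-1/2-\delta$).} With $s$ now effectively infinite, I improve the global weight from $\ell_0$ toward $-1/2$. The only place where improvement can fail is the sinks $\Radtsc_{\sinks}$, where Proposition~\ref{prop:tsc_radial_below} permits arbitrary weight $\ell<-1/2$. Away from the sinks, I use the above-threshold estimate at the sources and propagate forward into the sink region using Proposition~\ref{prop:propagation_localized}. Each iteration gains up to $r\ge 1$ in the weight through the $B'u\in\Sobsc^{s-1,\ell-r}$ slot; after finitely many iterations I reach any $\ell=-1/2-\delta$ globally, obtaining $u\in\Sobsc^{\infty,-1/2-\delta}$.

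\textbf{Step 3 (improvement to $-1/2-\delta+r$ off the sinks).} Having $u\in\Sobsc^{\infty,-1/2-\delta}$ globally and $Q_{\sources}G_\phi u\in\Sobsc^{\infty,\ell_+}$, I apply Proposition~\ref{prop:tsc_radial_above} at $\Radtsc_{\sources}$ with target weight $\ell=-1/2-\delta+r$. The requirement $B'u\in\Sobsc^{s-1,\ell-r}=\Sobsc^{s-1,-1/2-\delta}$ is satisfied from Step 2, and the starting input $B_1G_\phi u\in\Sobsc^{s',\ell'}$ with $-1/2<\ell'<\ell$ is provided by $Q_{\sources}G_\phi u$ (after possibly a finite bootstrap increasing $\ell_+$ by up to $r$). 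This yields weight $-1/2-\delta+r$ in a neighborhood of $\Radtsc_{\sources}$. To propagate this to any point in $\Elltsc(A)$ for $A$ with $\WFtsc'(A)\cap\Radtsc_{\sinks}=\varnothing$, I apply Proposition~\ref{prop:propagation_localized} with the source-neighborhood playing the role of $\Elltsc(E)$; the $B'u$ slot at weight $-1/2-\delta$ is again supplied by Step 2, and at elliptic points of $P_V$ the conclusion follows immediately from Proposition~\ref{prop:elliptic_regularity_tsc} since $P_V u$ is Schwartz. Since $\WFtsc'(A)$ is disjoint from $\Radtsc_{\sinks}$ by hypothesis and the flow lines through $\WFtsc'(A)\cap\Chartsc(P_V)$ run back to $\Radtsc_{\sources}$ by Theorem~\ref{thm:global flow}, we conclude $Au\in\Sobsc^{\infty,-1/2-\delta+r}$.

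The main obstacle is organizing the bootstrap in Step 3 so that the backward-control hypothesis of the propagation estimate is satisfied uniformly, i.e., so that for every $\alpha\in\WFtsc'(A)\cap\Chartsc(P_V)$ there is a finite-length flow segment connecting $\alpha$ to the source-neighborhood where we have already established weight $-1/2-\delta+r$, \emph{without} leaving the region where we control the $B'u$ error at weight $-1/2-\delta$. Since the latter is controlled globally, the only real content is to carry out the propagation via a cover of $\Chartsc(P_V)\setminus\Radtsc_{\sinks}$ by source-to-sink flow tubes; this is standard once the global flow structure of Theorem~\ref{thm:global flow} is invoked.
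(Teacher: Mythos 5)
Your three-step bootstrap is correct, and it fleshes out precisely what the paper asserts when it states that the lemma ``is an immediate consequence of the estimates in Section~4.3'' without supplying a detailed argument. The decomposition into (i) bootstrapping $s$, (ii) raising the global weight to the threshold $-1/2-\delta$ allowed by the below-threshold estimate at the sinks, and (iii) raising the weight by a further $r$ at the sources (Proposition~\ref{prop:tsc_radial_above}) and propagating it forward off the sinks (Proposition~\ref{prop:propagation_localized}) is exactly the intended mechanism, and your bookkeeping of the error slot $B'u\in \Sobsc^{s-1,\ell-r}$ against the running global weight is the right accounting. One small simplification worth noting: since $r\ge \ell_+-\ell_0$ gives $\ell_0+r\ge\ell_+>-1/2$, Step 2 already reaches $-1/2-\delta$ in a single pass, so the ``finitely many iterations'' and the parenthetical ``after possibly a finite bootstrap increasing $\ell_+$'' in Step 3 are unnecessary hedges; the hypothesis $r\ge\max\{1,\ell_+-\ell_0\}$ is precisely what makes a single pass suffice at each stage. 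You might also make explicit that the passage from $Q_{\sources}G_{\phi}u$ to control of $u$ on a full neighborhood of $\Radtsc_{\sources}$ (needed to seed Proposition~\ref{prop:propagation_localized}, whose term $E$ must satisfy $\WF'_{\ff}(E)=\varnothing$) goes through Proposition~\ref{prop:elliptic_Gpsi_estimate}, as in the proof of Lemma~\ref{lem:finite-dim_kernel}; this is a detail your sketch suppresses but not a gap.
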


The previous lemma states that if $P_V u \in \CdI$, then $u$ is above threshold except at the radial sinks.
If we assume that $P_V u = 0$, then $u$ is above threshold everywhere.

\begin{proposition}\label{prop:smooth_kernel}
    Let $s, \ell_0, \ell_+, r \in \RR$ with $\ell_0 < -1/2 < \ell_+$, and $r \geq \max\set{1, \ell_+ - \ell_0}$.
    Assume that $V$ is an admissible asymptotically static
    perturbation of order $r$,
    and in addition that $V \in \rho_{\mf} \Difftsc^{1}$ and $V = V^*$.
    Then, for all $\delta > 0$,
    \begin{align*}
        \ker_{\cX^{s,\ell_0, \ell_+}}P_V \subset \Sobsc^{\infty,-1/2 - \delta + r}.
    \end{align*}
\end{proposition}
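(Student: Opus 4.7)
The plan is to combine Lemma~\ref{lem:global_regularity} with the above-threshold radial point estimate at the sinks, using self-adjointness to close the argument. Let $u \in \ker_{\cX^{s,\ell_0,\ell_+}} P_V$, so $P_V u = 0 \in \CdI(X)$. First, I would apply Lemma~\ref{lem:global_regularity} to conclude that $u \in \Sobsc^{\infty, -1/2-\delta}$ globally, and moreover $Au \in \Sobsc^{\infty, -1/2-\delta+r}$ for every $A \in \Psitsc^{0,0}$ with $\WFtsc'(A) \cap \Radtsc_{\sinks} = \varnothing$. This already gives the desired conclusion of the proposition away from $\Radtsc_{\sinks}$, so it remains only to upgrade the weight from $-1/2-\delta$ to $-1/2-\delta+r$ microlocally at $\Radtsc_{\sinks}$.

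For this upgrade, I would invoke the above-threshold radial point estimate at the sinks, i.e.\ the analog of Proposition~\ref{prop:tsc_radial_above} with the roles of $\Radtsc_{\sources}$ and $\Radtsc_{\sinks}$ interchanged (valid by the symmetry remark after Proposition~\ref{prop:tsc_radial_below} applied at the above-threshold level, and established in \cite{BDGR}*{Sect.~7}). The required a priori above-threshold bound on the basin of $\Radtsc_{\sinks}$ is exactly the second conclusion of Lemma~\ref{lem:global_regularity}, since that basin meets $\Radtsc_{\sinks}$ only at $\Radtsc_{\sinks}$ itself. The $P_V u$ norms on the right-hand side vanish, while the lower-order error term $\norm{B' u}_{s-1, \ell-r}$ is controlled by $u \in \Sobsc^{\infty, -1/2 - \delta}$, using the constraint $r \geq \ell_+ - \ell_0$. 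The self-adjointness assumption $V = V^*$ enters at this point: it guarantees that the subprincipal symbol of $P_V$ vanishes and hence that the threshold value is exactly $-1/2$ symmetrically at sources and sinks, so the sinks-version of the above-threshold estimate is available with this threshold. The differential-operator assumption $V \in \rho_{\mf}\Difftsc^1$ is used to iterate the estimate to arbitrary Sobolev order: because $V$ is differential and we are already in $\Sobsc^{\infty,-1/2-\delta}$, each bootstrap step of the radial estimate gains one derivative without losing weight, giving $Bu \in \Sobsc^{s_1, -1/2-\delta+r}$ for every $s_1$, where $B$ is an arbitrary $\tsc$-cutoff to $\Radtsc_{\sinks}$.

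Combining the microlocal above-threshold bound at $\Radtsc_{\sinks}$ from the second paragraph with the bound away from $\Radtsc_{\sinks}$ from Lemma~\ref{lem:global_regularity} and patching via a partition of unity yields $u \in \Sobsc^{\infty, -1/2 - \delta + r}$. The main obstacle is the second paragraph: one must carefully arrange the microlocal cutoffs $B, B_1, G, G', B'$ in the sinks-version of Proposition~\ref{prop:tsc_radial_above} so that the a priori input $B_1 G_\psi u$ sees only the stronger weight $-1/2 - \delta + r$ (available on the basin by the first step), while the residual term $B' u$ absorbs the weaker global weight $-1/2 - \delta$. The numerical constraint $r \geq \max\{1,\ell_+ - \ell_0\}$ is precisely what makes this separation of weights consistent throughout the iteration.
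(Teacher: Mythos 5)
Your proposal diverges from the paper's proof in a way that runs into a genuine obstruction. You attempt to upgrade the weight at $\Radtsc_{\sinks}$ by applying the above-threshold radial point estimate (the sinks analogue of Proposition~\ref{prop:tsc_radial_above}). But that estimate has an essential a priori hypothesis: it requires $B_1 G_\phi u \in \Sobsc^{s',\ell'}$ with $\ell' > -1/2$ and with $B_1$ elliptic at the radial set in question (condition (2) of Proposition~\ref{prop:tsc_radial_above}). In other words, you must already know that $u$ has weight \emph{above} $-1/2$ in a neighborhood that includes the sinks themselves. A kernel element $u \in \cX^{s,\ell_0,\ell_+}$ has only the below-threshold weight $\ell_0 < -1/2$ (improved to $-1/2-\delta$ by Lemma~\ref{lem:global_regularity}, still strictly below threshold) at $\Radtsc_{\sinks}$, and the second conclusion of Lemma~\ref{lem:global_regularity} explicitly excludes $\Radtsc_{\sinks}$ by requiring $\WFtsc'(A)\cap \Radtsc_{\sinks}=\varnothing$. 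Your claim that it suffices to have the above-threshold input ``on the basin of $\Radtsc_{\sinks}$'' away from the radial set is not what the radial point estimate asks for; that weaker information is what the \emph{below}-threshold estimate, Proposition~\ref{prop:tsc_radial_below}, consumes, and it cannot output more than $\ell < -1/2$. So the bootstrap you describe never gets off the ground: there is no starting above-threshold bound at the sinks to feed into the radial estimate.

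This gap is precisely what the paper's actual argument is designed to close. The proof of Proposition~\ref{prop:smooth_kernel} does not use the radial point estimates at the sinks at all; instead it runs a positive commutator argument in the style of Vasy~\cite{Vasy2000}*{Proposition~17.8}. It introduces the family of compactly supported cutoffs $\chi_\eps$, uses self-adjointness $P_V = P_V^*$ to obtain the exact cancellation $0 = i\ang{[P_V,\chi_\eps]u,u}$ for $u$ in the kernel, decomposes $i[P_V,\chi_\eps]$ into a sign-definite leading term $2x^{-2\alpha}\varphi(x/\eps)^2 B^*B$ (with $B$ localized at the sinks), error terms $E_1, E_2$ localized away from the sinks which are controlled by the already-established regularity of Lemma~\ref{lem:global_regularity}, and a uniformly bounded lower-order $F_\eps'$. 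Passing to the limit $\eps\to 0$ yields the above-threshold bound $Bu\in\Sobsc^{0,\alpha}$ at $\Radtsc_{\sinks}$ directly, without needing any a priori above-threshold input there; only then does one invoke the above-threshold radial point estimate. So the role of $V = V^*$ is not merely to get the symmetric threshold value $-1/2$ as you suggest, but to make the commutator pairing vanish exactly; and the role of $V \in \rho_{\mf}\Difftsc^1$ is to control the commutator $[V,\chi_\eps]$, not to iterate a radial estimate in Sobolev order. To repair your argument you would need to replace the second paragraph with a positive commutator argument of this kind (or some other mechanism that produces, rather than consumes, the initial above-threshold information at $\Radtsc_{\sinks}$).
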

\begin{proof}
    We employ essentially the same argument as the one provided by
    Vasy~\cite{Vasy2000}*{Proposition~17.8} (see also \cite{Vasy2020}*{Proposition~7}).
    For $\alpha \in (-1/2, 0)$ and $\eps > 0$, we introduce the family of
    cut-off functions $\chi_\eps(x)$ given by
    \begin{equation*}
        \chi_\eps(x) = \eps^{-2\alpha-1} \int_{0}^{x/\eps}\varphi(s)^{2}s^{-2\alpha-2}\,ds\,,
    \end{equation*}
    where $\varphi \in \CI(\RR)$ is non-negative,
    $\varphi \equiv 0$ on $\ioc{-\infty, 1}$, and $\varphi \equiv 1$ on $\ico{2, \infty}$.
    For fixed $\eps > 0$, $\chi_{\eps}$ is compactly supported in the
    interior of $M$ and hence an element of $\Psisc^{0,-\infty}(X)$.
    As a family in $\eps \in (0,1)$, however, $\chi_{\eps}$ is not
    uniformly bounded in any symbol space.
    On the other hand, its commutator with $x^{2}\pa_{x}$ is uniformly bounded
    in $\Psisc^{0,2\alpha}$, as
    \begin{equation*}
        [x^{2}\pa_{x}, \chi_{\eps}] = x^{2}\pa_{x}\chi_{\eps}(x) =
        x^{-2\alpha}\varphi\left( x/\eps\right)^{2}.
    \end{equation*}

    For $u\in \ker_{\cX^{s,\ell_0, \ell_+}}P_{V}$, we consider the pairing
    \begin{equation*}
      0 = i\left( \ang{\chi_{\eps} u , P_V u} - \ang{ P_V u, \chi_{\eps}u}\right) = i \ang{ [P_V, \chi_{\eps}] u, u}\,,
    \end{equation*}
    where the second equality follows from $P_V^* = P_V$ and the fact that $\chi_\eps$ is compactly supported.

    Since $V \in \rho_{\mf} \Difftsc^{1}$, we have that $[V, \chi_\eps] = \varphi(x/\eps) f(x, z)$ for some $f \in \Stsc^0$.
    Therefore, we can write
    \begin{align*}
        i[P_V, \chi_\eps] &= 2 x^{-2\alpha} \varphi(x/\eps)^2 (x^2 D_x) + F_\eps\,,
    \end{align*}
    where $F_\eps \in \Psitsc^{0, 2\alpha-1}$ is uniformly bounded.

    We now work in $t \gg 0$, as the argument in $t \ll 0$ is similar
    with a sign change.
    Choose $b \in \CI(\Wperpo)$ such that $b \equiv 1$ for $\tau > m/2$ and $b \equiv 0$ for $\tau < m/4$ and
    set
    \begin{align*}
        B &\coloneqq \Op_L(\tau^{1/2} b(\tau)) G_\psi\,,\\
        E &\coloneqq (x^2 D_x) (\id - G_\psi^2) + \Op_L( \tau(1 - b(\tau)^2)) G_\psi^{2}\,,
    \end{align*}
    so that
    \begin{align*}
        x^2 D_x = B^* B + E + R
    \end{align*}
    with $R \in \Psitsc^{0, - 1}$, hence we obtain that
    \begin{align*}
        i[P_V, \chi_\eps] &= 2 x^{-2\alpha} \varphi(x/\eps)^2 \left( B^* B + E \right) + F_\eps'\,,
    \end{align*}
    where $F_\eps' \in \Psitsc^{0, 2\alpha-1}$ is uniformly bounded.

    We write $E = E_1 + E_2$, where
    \begin{align*}
        E_1 &\coloneqq (x^2 D_x) (\id - G_\psi^2)\,,\\
        E_2 &\coloneqq \Op_L( \tau(1 - b(\tau)^2)) G_\psi^{2}\,.
    \end{align*}
    By Proposition~\ref{prop:elliptic_Gpsi_estimate}, we have that $\norm{E_1 u}_{s, -1/2 - \delta + r} \lesssim \norm{u}_{s-1, -1/2 - \delta}$
    and therefore $E_1 u \in \Sobsc^{\infty,-1/2 - \delta + r}$. The operator $E_2$ has $\tsc$-wavefront set away from the sinks
    of the Hamiltonian flow, $\WFtsc'(E_2) \cap \Radtsc_{\sinks} = \varnothing$, therefore $E_2 u \in \Sobsc^{\infty, -1/2 - \delta + r}$
    by Lemma~\ref{lem:global_regularity}.

    Since commutators with $\varphi(x/\eps)$ decrease the order and are uniformly bounded, we obtain that
    \begin{align*}
        \norm{x^{-\alpha} \varphi(x/\eps) B u}^2 \lesssim \abs{ \ang{ x^{-\alpha} \varphi(x/\eps) E u, x^{-\alpha} \varphi(x/\eps) u} } + \abs{ \ang{F_\eps'' u, u}}
    \end{align*}
    with $F_\eps'' \in \Psitsc^{0, 2\alpha-1}$ being uniformly bounded.
    Taking $\delta = - \alpha \in (0,1/2)$, we observe that the right hand side is finite and hence $x^{-\alpha} \varphi(x/\eps) B u$ is uniformly bounded.
    This implies that $B u$ is bounded in $\Sobsc^{0,\alpha}$ and by Proposition~\ref{prop:elliptic_Gpsi_estimate}, we conclude that
    $u \in \Sobsc^{0, \alpha}$ near $\gamma_{\tsc}(\Radtsc_{\sinks})$, in particular, $u$ is above threshold on $\Radtsc_{\sinks}$.
    This implies that $u$ is in $\Sobsc^{s,-1/2 - \delta + r}$ for $t \gg 0$ by the above threshold radial set estimates.
\end{proof}

With this, we can prove that under the same assumptions
as in the case of the causal propagators,
$P_V : \cX^{s,\ell_0, \ell_+} \to \cY^{s,\ell_0, \ell_+}$ is invertible.

\begin{proof}[Proof of Theorem~\ref{thm:invertible_no_bound}]
  By Lemma~\ref{lem:finite-dim_kernel} and
  Lemma~\ref{lem:finite-dim_cokernel}, we have that $P_V$ is a
  Fredholm operator.  Let $u \in \ker P_V$. By
  Proposition~\ref{prop:smooth_kernel}, $u$ has above threshold
  regularity at the entire radial set. Hence, $u$ is an element in a
  causal space for $P_V$ (for any choice of $s, \ellvar$).  By
  \cite{BDGR}*{Theorem 8.2},
  $P_V : \cX^{s,\ellvar} \to \cY^{s,\ellvar}$ is invertible, hence
  $u \equiv 0$.

    Because Lemma~\ref{lem:finite-dim_cokernel} shows that the
    cokernel is contained in one of the $\cX$ spaces and $P_{V}^{*}$
    satisfies the same estimates, the same argument proves that the cokernel is trivial as well.
\end{proof}

\begin{proposition}\label{prop:unique_inverse}
    The inverse $\PVinv$ is independent of the parameters $s, \ell_0, \ell_+$ and the cut-offs in the following sense:
    let $j \in \set{1,2}$ and $\delta_j > 0$, $Q_{\sources,j} \in \Psitsc_{\sources,\delta_j}$, $Q_{\sources,j}'$, $B_{\sources,j} \in \Psitsc^{0,0}$,
    $\phi_j, \psi_j \in \CcI(\RR)$, and $s_j, \ell_{0,j}, \ell_{+,j} \in \RR$ satisfying the assumptions of Theorem~\ref{thm:invertible_no_bound}.
    Assume that $u_j \in \Sobsc^{s_j,\ell_{0,j}}$ and $Q_{\sources,j} G_{\phi_j} u \in \Sobsc^{s_j, \ell_{+,j}}$
    with $P_V u_j \in \Sobsc^{s_j-1, \ell_{0,j} + 1}$ and $Q_{\sources,j}' G_{\psi_j} P_V u_j \in \Sobsc^{s_j-1, \ell_{+, j} + 1}, B_{\sources,j} P_V u_j \in \Sobsc^{s_j - 2, \ell_{+,j}}$.
    If $P_V u_1 = P_V u_2$, then $u_1 = u_2$.

    In particular, $\PVinv$ defines a continuous linear operator
    \begin{align*}
        \PVinv : \CdmI(X) \to \CmI(X)\,.
    \end{align*}
\end{proposition}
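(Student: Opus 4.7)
The plan is to show $u_1 = u_2$ by placing $w \coloneqq u_1 - u_2$ in a single Feynman domain $\cX^{s, \ell_0, \ell_+}$ associated to one common choice of parameters and cutoffs compatible with both $u_1$ and $u_2$. Since $P_V w = 0$ then lies trivially in the associated $\cY$-space, the invertibility statement of Theorem~\ref{thm:invertible_no_bound} will force $w = 0$. The final continuity assertion on $\PVinv \colon \CdmI(X) \to \CmI(X)$ then follows formally from this uniqueness together with boundedness of $\PVinv$ on the Feynman spaces.

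I would begin by taking the worst of the given parameters: set $s = \min(s_1, s_2)$, $\ell_0 = \min(\ell_{0,1}, \ell_{0,2})$, and $\ell_+ = \min(\ell_{+,1}, \ell_{+,2})$, which preserves $\ell_0 < -1/2 < \ell_+$ and places $w \in \Sobsc^{s, \ell_0}$ by inclusion. Next, with $\delta \le \min(\delta_1, \delta_2)$, I would select $Q_{\sources} \in \Psitsc_{\sources, \delta}$, $Q_{\sources}' \in \Psitsc_{\sources, 2\delta}$, $B_{\sources} \in \Psitsc^{0,0}$, and cutoff functions $\phi, \psi \in \CcI(\RR)$ as in the setup of Theorem~\ref{thm:invertible_no_bound}, imposing the additional compatibility constraints
\begin{equation*}
    \WFtsc'(Q_{\sources}) \subset \Elltsc(Q_{\sources, 1}) \cap \Elltsc(Q_{\sources, 2})\,,
\end{equation*}
analogous inclusions for $Q_{\sources}'$ and $B_{\sources}$, and $\phi_j \equiv 1$ on $\supp \phi$ for both $j$ (with the analogous property for $\psi$). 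These choices are available because each $\Elltsc(Q_{\sources, j})$ is an open neighborhood of $\Radtsc_{\sources}$---since $Q_{\sources, j}$ is microlocally the identity there---and each cutoff $\phi_j$ equals $1$ on a neighborhood of the origin.

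The core step is to verify $Q_{\sources} G_\phi w \in \Sobsc^{s, \ell_+}$. For each $j$, I would apply Proposition~\ref{prop:elliptic_regularity_Gpsi} with $B = Q_{\sources}$, $Q = Q_{\sources, j}$, $\varphi = \phi$, and $\psi = \phi_j$. The hypothesis $\WFtsc'(Q_{\sources} G_\phi) \subset \Elltsc(Q_{\sources, j})$ follows from the wavefront containment above, the relation $\phi \phi_j = \phi$ from the support choice, and $\phi(0) = 1$ by construction. The resulting estimate
\begin{equation*}
    \norm{Q_{\sources} G_\phi u_j}_{s_j, \ell_{+, j}} \lesssim \norm{Q_{\sources, j} G_{\phi_j} u_j}_{s_j, \ell_{+, j}} + \normres{u_j}
\end{equation*}
is finite by hypothesis. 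Thus $Q_{\sources} G_\phi u_j \in \Sobsc^{s_j, \ell_{+, j}} \subset \Sobsc^{s, \ell_+}$, and subtracting gives $Q_{\sources} G_\phi w \in \Sobsc^{s, \ell_+}$. Combined with the trivial $\cY$-side conditions on $P_V w = 0$, this places $w \in \cX^{s, \ell_0, \ell_+}$, and Theorem~\ref{thm:invertible_no_bound} then concludes that $w = 0$.

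For the final statement, any $f \in \CdmI(X)$ is a compactly supported distribution and thus lies in every $\cY^{s, \ell_0, \ell_+}$ with admissible parameters. The distribution $\PVinv f \in \cX^{s, \ell_0, \ell_+} \subset \CmI(X)$ is well-defined by the uniqueness just proved, and continuity as a map $\CdmI(X) \to \CmI(X)$ follows from the boundedness of $\PVinv \colon \cY^{s, \ell_0, \ell_+} \to \cX^{s, \ell_0, \ell_+}$ together with the continuous inclusions $\CdmI(X) \hookrightarrow \cY^{s, \ell_0, \ell_+}$ and $\cX^{s, \ell_0, \ell_+} \hookrightarrow \CmI(X)$. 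I expect the main obstacle to be the careful verification of the wavefront and support compatibility conditions in the application of Proposition~\ref{prop:elliptic_regularity_Gpsi}, though both are manageable via appropriate smallness in the choice of $Q_{\sources}$ and $\supp \phi$.
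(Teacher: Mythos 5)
Your proof is correct and takes the same route as the paper's: set $w = u_1 - u_2$, note $P_V w = 0$, and place $w$ in a single Feynman space $\cX^{s,\ell_0,\ell_+}$ by choosing parameters and microlocalizers compatible with both hypotheses, so that the invertibility of Theorem~\ref{thm:invertible_no_bound} forces $w = 0$. The paper's own proof is considerably terser, asserting only that such a compatible choice exists; your elaboration — taking the minimum of the Sobolev orders and using Proposition~\ref{prop:elliptic_regularity_Gpsi} to transfer above-threshold regularity from $Q_{\sources,j} G_{\phi_j} u_j$ to $Q_{\sources} G_\phi u_j$ — is a reasonable way to fill in the omitted details, with the minor caveat that the relevant containment to verify is $\WFtsc'(Q_{\sources} G_\phi) \subset \Elltsc(Q_{\sources,j})$ rather than the stated stronger $\WFtsc'(Q_{\sources}) \subset \Elltsc(Q_{\sources,j})$, as you yourself use in the verification.
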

\begin{proof}
    Set $u \coloneqq u_1 - u_2$. We have that $P_V u = 0$ and we can find parameters $s, \ell_0, \ell_+ \in \RR$ and cut-offs such that
    $u$ is in a $\cX$-space. Since $P_V : \cX \to \cY$ is invertible, it follows that $u \equiv 0$.
\end{proof}

\section{Regularity of solutions with compactly supported forcing}\label{sec:regularity}

We now relate the propagators constructed here to the distinguished parametrices
of Duistermaat--Hörmander~\cite{DH1972}.  We follow the exposition
of G\'{e}rard--Wrochna~\cites{GW2019,GW2020}.\footnote{Note that the sign conventions are different, hence the flow direction is reversed.}

A bounded linear operator $A : \CdI(X) \to \CmI(X)$ can be identified with its Schwartz kernel $K \in \CmI(X \times X)$. We set
\begin{align*}
    \widetilde{\WF}(A) \coloneqq \set*{ (p, \xi, p', \xi') \colon (p, p', \xi, -\xi') \in \WF_{\cl}(K)}\,.\footnotemark
\end{align*}
\footnotetext{This set is usually denoted by $\WF'$, but this causes confusion with the operator wavefront set.}%
The microlocal Hadamard condition as introduced by Radzikowski~\cite{Radzikowski1996} is the following:
\begin{definition}
    A bounded linear operator $E : \CcI \to \CmI$ is called a Feynman parametrix if $\id - E P_V$ and $\id - P_V E$ have smooth Schwartz kernels and
    \begin{align*}
        \widetilde{\WF}(E) = \diag_{T^* X \setminus 0} \cup \bigcup_{s \geq 0} \tilde{\Phi}_s( \diag_{\Char(P_0)} ) \,.
    \end{align*}
    Here, $\tilde{\Phi}_s$ denotes the bicharacteristic flow acting on the left component of $T^* X \times T^* X$.
\end{definition}

\begin{proposition}
    Let $E$ be a Feynman parametrix. Then $E$ uniquely extends to a map $\CcmI \to \CmI$ and for $f \in \CcmI$ we have that
    \begin{align*}
        \WF_{\cl}(E f) \subset \WF_{\cl}(f) \cup \bigcup_{s \geq 0} \Phi_s\left( \WF_{\cl}(f) \cap \Char(P_0) \right)\,.
    \end{align*}
\end{proposition}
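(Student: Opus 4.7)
My plan is to realize $Ef$ as a composition of distributions via the Schwartz kernel $K$ of $E$, namely $Ef = \pi_{1*}(K \cdot \pi_2^* f)$, and then invoke H\"ormander's rules for pullback, multiplication, and pushforward of distributions. First I would verify that this composition is well-defined on $\CcmI$, and then deduce the wavefront inclusion from the standard kernel composition estimate for distributions.

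For the extension step, the pullback $\pi_2^* f$ is unobstructed because $N^*(\pi_2)$ consists of covectors vanishing in the second factor while $\WF_{\cl}(f)$ excludes the zero section; this gives $\WF_{\cl}(\pi_2^* f) \subset \{(x, y, 0, \eta) : (y, \eta) \in \WF_{\cl}(f)\}$. The product $K \cdot \pi_2^* f$ is defined, by H\"ormander's non-cancellation criterion, provided $\WF_{\cl}(K)$ contains no element $(x, y, 0, -\eta)$ with $(y, \eta) \in \WF_{\cl}(f)$. Translated through $(p, p', \xi, -\xi') \leftrightarrow (p, \xi, p', \xi')$, this amounts to saying that no element of $\widetilde{\WF}(E)$ has vanishing left covector, which is immediate from the Feynman description: both $\diag_{T^*X \setminus 0}$ and $\tilde{\Phi}_s(\diag_{\Char(P_0)})$ lie in $(T^*X \setminus 0) \times (T^*X \setminus 0)$, since $\Char(P_0)$ excludes the zero section and the bicharacteristic flow preserves it. The pushforward by $\pi_1$ is then safe because $\supp(K \cdot \pi_2^* f) \subset X \times \supp f$ is proper over $X$, giving $Ef \in \CmI(X)$ and uniqueness by density of $\CcI$ in $\CcmI$ in an appropriate topology respecting wavefront sets.

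Having extended $E$ to a continuous map $\CcmI(X) \to \CmI(X)$, I would then apply the standard wavefront composition estimate
\begin{equation*}
\WF_{\cl}(Ef) \subset \bigl(\widetilde{\WF}(E) \circ \WF_{\cl}(f)\bigr) \cup \widetilde{\WF}_X(E),
\end{equation*}
where $\widetilde{\WF}_X(E) = \{(x, \xi) : (x, \xi, y, 0) \in \widetilde{\WF}(E) \text{ for some } y\}$. The set $\widetilde{\WF}_X(E)$ is empty because the right covector in $\widetilde{\WF}(E)$ never vanishes (same reasoning as above, applied to the right factor). The diagonal component of $\widetilde{\WF}(E)$ composes with $\WF_{\cl}(f)$ to yield $\WF_{\cl}(f)$ itself, while the flowout piece is characterized by $(x, \xi, y, \eta) \in \tilde{\Phi}_s(\diag_{\Char(P_0)})$ iff $(y, \eta) \in \Char(P_0)$ and $(x, \xi) = \Phi_s(y, \eta)$, so composition with $\WF_{\cl}(f)$ extracts exactly $\Phi_s(\WF_{\cl}(f) \cap \Char(P_0))$. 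Taking the union over $s \geq 0$ produces the second set on the right hand side of the claim.

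The main bookkeeping concern is the sign twist between $\WF_{\cl}(K)$ and $\widetilde{\WF}(E)$, and ensuring that $\tilde{\Phi}_s$ acting on the left factor corresponds to applying the forward flow $\Phi_s$ (not $\Phi_{-s}$) in the final estimate. I do not anticipate a deeper analytic obstacle: once the non-cancellation condition is verified, the rest is a direct application of H\"ormander's theorem on the wavefront set of a distributional composition, and the structure of the Feynman $\widetilde{\WF}(E)$ makes the non-cancellation automatic.
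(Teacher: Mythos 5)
Your proposal is correct and follows essentially the same route as the paper: both reduce to Hörmander's theorem on the wavefront set of a kernel composition (Theorem 8.2.13 in \cite{Hormander1}), after verifying that the two problematic projections of $\WF_{\cl}(K_E)$ — those with vanishing left or vanishing right covector — are empty, which is automatic because $\widetilde{\WF}(E)$ lies in $(T^*X\setminus 0)\times(T^*X\setminus 0)$. Your write-up simply unpacks the pullback/multiplication/pushforward mechanism behind that theorem in more detail than the paper's terse citation.
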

\begin{proof}
    This follows almost directly from Hörmander~\cite{Hormander1}*{Theorem~8.2.13}. Denote by $K_E$ the Schwartz kernel of $E$.
    By the condition on $\widetilde{\WF}(E)$, we have that
    \begin{align*}
        \WF'(K_E)_X \coloneqq \set{(p', \xi') \colon (p, p', 0, -\xi') \in \WF_{\cl}(K_E) \text{ for some } p \in X} = \varnothing,
    \end{align*}
    hence $E f$ is uniquely defined for any $f \in \CcmI(X)$.
    Moreover, we have that $\WF(K_E)_X \coloneqq \set{(p, \xi) \colon (p, p', \xi, 0) \in \WF_{\cl}(K_E) \text{ for some } p' \in X} = \varnothing$, so
    \begin{align*}
        \WF_{\cl}(E f) &\subset \widetilde{\WF}(E) \circ \WF_{\cl}(f) \\
        &= \WF_{\cl}(f) \cup \set{ (\exp(s \Hamsc_p) q, q) \colon q \in \WF_{\cl}(f) \cap \Char(P_0)}\,.
    \end{align*}
\end{proof}

This version of the microlocal Hadamard condition is more in line with the support condition for the causal propagators. 
The inverse $\PVinv$ as constructed in Proposition~\ref{prop:unique_inverse} has the same property:
\begin{theorem}\label{thm:regularity}
    Let $f \in \CdmI(X)$, then
    \begin{align*}
        \WF_{\cl}(\PVinv f) \subset \WF_{\cl}(f) \cup \bigcup_{s \geq 0} \Phi_s \left( \WF_{\cl}(f) \cap \Char(P_0) \right)\,.
    \end{align*}
\end{theorem}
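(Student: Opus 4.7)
The plan is to combine elliptic regularity and propagation of singularities in the interior with the Feynman-specific input---the above-threshold regularity of $u = \PVinv f$ at $\Rad_{\sources}$---to propagate regularity from the asymptotic sources along the Hamiltonian flow to any candidate interior wavefront point. Fix $q \in T^*X^\circ \setminus 0$ not lying in the right-hand side; the goal is to show $q \notin \WF_{\cl}(u)$.

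If $q \notin \Char(P_0)$, then since $V$ is of lower $\tsc$-order than $P_0$, the operator $P_V$ is microlocally elliptic at $q$, and interior elliptic regularity (the interior case of Proposition~\ref{prop:elliptic_regularity_tsc}) yields $q \notin \WF_{\cl}(u)$ from $q \notin \WF_{\cl}(f)$. Otherwise $q \in \Char(P_0)$, and by hypothesis the backward bicharacteristic $\gamma(s) \coloneqq \Phi_{-s}(q)$, $s \geq 0$, does not meet $\WF_{\cl}(f)$. By Theorem~\ref{thm:global flow}, the closure of $\{\gamma(s) : s \geq 0\}$ in $\Ctscd$ meets $\Rad_{\sources}$ in a nonempty compact set $K$.

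By construction $u \in \cX^{s_0,\ell_0,\ell_+}$, so $Q_{\sources} G_\phi u \in \Sobsc^{s_0,\ell_+}$ with $\ell_+ > -1/2$, giving above-threshold regularity at $\Rad_{\sources}$ and in particular near $K$. I would then bootstrap inductively in the Sobolev order $N$. Working first in the main case where $f$ is compactly supported in $X^\circ$ (so $\WFtsc(f)$ does not touch the boundary), the above-threshold radial point estimate (Proposition~\ref{prop:tsc_radial_above}) applied iteratively upgrades $u$ microlocally near $K$ from $\Sobsc^{s_0,\ell_+}$ to $\Sobsc^{N,\ell_+}$ for every $N$. The regular propagation estimate (Proposition~\ref{prop:propagation_localized}), combined with $\WF_{\cl}(f) \cap \gamma = \varnothing$, then propagates this $\Sobsc^N$ regularity from a neighborhood of $K$ forward along $\gamma$ to a neighborhood of $q$. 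Since $N$ is arbitrary, $q \notin \WF_{\cl}(u)$. The general case $f \in \CdmI(X)$ is handled by decomposing $f$ into an interior-supported and a boundary-supported piece and arguing separately, using for the boundary-supported piece that $\PVinv$ maps $\tsc$-smooth inputs to outputs smooth in $X^\circ$ (cf.\ Lemma~\ref{lem:global_regularity}).

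The main technical obstacle is setting up a cascade of $\tsc$-microlocal cutoffs along a small neighborhood of $\gamma \cup K$ that satisfies the wavefront-inclusion and flow-control conditions required by Propositions~\ref{prop:tsc_radial_above} and~\ref{prop:propagation_localized}. The starting cutoff at $K$ must be compatible with Definition~\ref{def:localizer-to-src} so that the above-threshold hypothesis on $u$ is directly usable; the intermediate cutoffs must have operator wavefront sets avoiding $\Rad_{\sinks}$ (where we have no regularity); and consecutive cutoffs must fit together so that the backward control conditions of Proposition~\ref{prop:propagation_localized} are met. This is the microlocal analogue, along a single bicharacteristic, of the global partition-of-unity argument used in the proof of Lemma~\ref{lem:finite-dim_kernel}.
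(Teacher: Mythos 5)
Your proposal is correct and fills in, in appropriate detail, the argument that the paper compresses into a one-sentence appeal to propagation of singularities estimates. The chain you identify — elliptic regularity off $\Char(P_0)$, backward flow to $\Rad_{\sources}$ guaranteed by Theorem~\ref{thm:global flow}, above-threshold (hence arbitrarily high Sobolev) regularity there from membership of $\PVinv f$ in the Feynman $\cX$-space, and then forward real-principal-type propagation along the backward bicharacteristic to the target point — is precisely the mechanism the paper's estimates are designed to implement.
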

This theorem follows directly from the propagation estimates in the scattering calculus.

\begin{proof}[Proof of Theorem~\ref{thm:main}]
    Theorem~\ref{thm:main} is a direct consequence of Theorem~\ref{thm:invertible_no_bound}, Proposition~\ref{prop:unique_inverse}, and Theorem~\ref{thm:regularity}.
\end{proof}

\section{The case of bound states}\label{sec:bound states}

We now discuss the case that one or both of the limiting spatial
Hamiltonians $H_{V_{\pm}}$ admits bound states.  This follows the
treatment of bound states in our work~\cite{BDGR}*{Sect.\ 8.2} on the causal propagators.
We now assume that there are only finitely many bound states of the limiting Hamiltonians, $H_{V_\pm}$ and
they are contained in the interval $\ioo{-\infty, m^2}$.
Moreover,
we assume throughout that
\begin{equation}
  \label{eq:no thresh eigenvalue}
  0 \not \in \sigma(H_{V_{\pm}}),
\end{equation}
i.e., $0$ is not an eigenvalue of $H_{V_{\pm}}$, as the corresponding
solutions with linear growth in time complicate the analysis
substantially.

Setting $\lambda(\tau) = \sqrt{m^{2} - \tau^{2}}$, denote the
eigenspace of $\Delta_{z} + V_{\pm}$ at frequency $\lambda$ by
\begin{equation*}
  E_{\pm}(\lambda(\tau_{0})) = \set*{ w \in L^{2}(\RR^{n}) \colon
    \ffsymbzpm(P_V)(\tau_{0}) w = 0}.
\end{equation*}
Since the operators $\ffsymbzpm(P_V)(\tau_{0})$ are scattering
elliptic for $|\tau_{0}| < m$, such bound states $w$ are Schwartz:
\begin{equation*}
  E_{\pm}(\lambda(\tau_{0})) \subset \schwartz(\mathbb{R}^{n}).
\end{equation*}
We denote the set of values $\tau$ at which there are non-trivial
bound states by
\begin{equation}
  \label{eq:boundstates}
  \cB_{V,\pm} = \{ \tau \in (-m, m) \colon E_{\pm}(\lambda(\tau)) \neq
  \{ 0 \} \}.
\end{equation}
Thus, under our assumptions, $\cB_{V,\pm}$ is finite and $0 \not \in
\cB_{V,\pm}$.  An element $\tau_{0} \in \cB_{V,+}$ and a choice of $w \in
E_{+}(\lambda(\tau))$ gives solutions
\begin{equation}\label{eq:asymptotic soln bound}
u_{w, \tau_0}(t, z) \coloneqq e^{i \tau_{0} t} w(z), \quad   P_{V_{+}}(u_{w, \tau_0}) = 0,
\end{equation}
and the same for $V_{-}$; these solutions lie exactly at threshold,
meaning $e^{\pm i \tau_{0} t} w(z) \in \Sobsc^{- \infty, -1/2 -
  \epsilon}$ for all $\epsilon > 0$ and no better.  They have
wavefront set at $\tau_{0} \in W^{\perp}_{+}$.

\begin{figure}
    \centering
    \begin{tikzpicture}[scale=2]
        \draw (0,1) -- (0,-1);
        \draw [thick] (0,1) -- (0,0.75);
        \draw [thick] (0,-0.75) -- (0,-1);
        \filldraw (0,0.75) circle (0.5pt) node [right] {$\tau = m$};
        \filldraw (0,-0.75) circle (0.5pt) node [right] {$\tau = -m$};
        \draw (-0.03,0) -- (0.03,0);
        \draw plot [only marks, mark=x, mark size=1pt] coordinates
            {(0,0.5) (0,0.2) (0,0.1) (0,-0.1) (0,-0.2) (0,-0.5)};
    \end{tikzpicture}
    \caption{The set $\cB_{V,+} \subset W_+^\perp$}
\end{figure}
From the discussion in the introduction, it is clear which asymptotic
solutions $e^{i \tau_{0} t} w(z)$ should be allowed in the domain and
which should be excluded (see \eqref{eq:free asymptotics}) to obtain
a Feynman type problem.
Near $\NP$ we allow asymptotics as in
\eqref{eq:asymptotic soln bound} with $\tau_{0} > 0$ and exclude those
with $\tau_{0} < 0$, while near $\NP$ we allow asymptotics as in
\eqref{eq:asymptotic soln bound} with $\tau_{0} < 0$ and exclude those
with $\tau_{0} > 0$.
We set
\begin{align*}
    \cB_{V, \sources} &\coloneqq \set{ (\fut, \tau) \colon \tau \in \cB_{V,+}, \tau < 0} \cup \set{ (\pas, \tau) \colon \tau \in \cB_{V,-}, \tau > 0}\,,\\
    \cB_{V, \sinks} &\coloneqq \set{ (\fut, \tau) \colon \tau \in \cB_{V,+}, \tau > 0} \cup \set{ (\pas, \tau) \colon \tau \in \cB_{V,-}, \tau < 0}\,.
\end{align*}

To quantify this inclusion/exclusion statement, we introduce the Fourier localizing bound
state projectors from \cite{BDGR}*{Sect.\ 8.2}.  Specifically,
following \cite{BDGR}*{Eq.\ 8.21}, we let $\Kf_{\tau_{0}} =
K_{\tau_{0}}(t, z, t', z')$ denote the integral kernel of the operator
which: (1) cuts off to large positive times, (2) projects onto the
space of bound states $E_{+}(\lambda(\tau_{0}))$, and (3) localizes in
$\tau$ Fourier space near $\tau_{0}$.  Specifically
\begin{equation}
  \label{eq:K future tau 0}
  \Kf_{\tau_{0}}(t, z, t', z') = \frac{1}{2 \pi} \int_{-\infty}^{\infty}
e^{i(t - t') \tau} \chi_{\ge t_{0}}(t) \chi_{\tau_{0}} (\tau)   \cdot
\Pi_{\tau_{0}}(z, z') \chi_{\ge t_{0}}(t') d\tau.
\end{equation}
(Here $\chi_{\ge t_{0}}(t)$ is a cutoff to $t \ge t_{0}$,
$\Pi_{\tau_{0}}$ is the projection onto  $E_{+}(\lambda(\tau_{0}))$,
and $\chi_{\tau_{0}}$ is a cutoff to a small neighborhood of
$\tau_{0}$.)
Near $\SP$ we use $\Kp_{\tau_{0}}$, defined similarly but with
$\chi_{\ge t_{0}}$ replaced by $\chi_{\le t_{0}}$ which cut off to
negative times.

Importantly, we have:
\begin{lemma}[cf. \cite{BDGR}*{Lemma 8.9}]
    For $\chi_{\tau_{0}}$ with sufficiently small support near $\tau_{0}$,
    \begin{equation*}
        \Kfp_{\tau_{0}} \in \Psitsc^{-\infty, 0}.
    \end{equation*}
\end{lemma}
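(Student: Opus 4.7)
The plan is to verify the claim by explicitly computing the left symbol of $\Kf_{\tau_{0}}$ and checking that it lies in $\Stsc^{-\infty, 0}$; the argument for $\Kp_{\tau_{0}}$ is symmetric. Using that $E_{+}(\lambda(\tau_{0}))$ is finite dimensional and contained in $\schwartz(\RR^{n})$, I would write $\Pi_{\tau_{0}}(z, z') = \sum_{j=1}^{N_{0}} w_{j}(z) \overline{w_{j}(z')}$ with $w_{j} \in \schwartz(\RR^{n})$. Performing the change of variables $s = t - t'$, $w = z - z'$ in the left-symbol formula applied to the kernel~\eqref{eq:K future tau 0} yields
\begin{equation*}
  a_{L}(t, z, \tau, \zeta) = \chi_{\ge t_{0}}(t)\, \hat{g}_{t}(\tau)\, p(z, \zeta),
\end{equation*}
where $p(z, \zeta) \in \schwartz(\RR^{n}_{z} \times \RR^{n}_{\zeta})$ is the left symbol of the finite-rank smoothing operator $\Pi_{\tau_{0}}$ and $\hat{g}_{t}(\tau) = \int e^{-is\tau}\, \chi_{\ge t_{0}}(t - s)\, \hat{\chi}_{\tau_{0}}(s)\, ds$.

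Next I would split off the leading piece of $\hat{g}_{t}$ by writing $\chi_{\ge t_{0}} = 1 - \tilde{\chi}_{\ge t_{0}}$ with $\tilde{\chi}_{\ge t_{0}}$ compactly supported:
\begin{equation*}
  \hat{g}_{t}(\tau) = \chi_{\tau_{0}}(\tau) - e^{-it\tau} \int e^{iu\tau}\, \tilde{\chi}_{\ge t_{0}}(u)\, \hat{\chi}_{\tau_{0}}(t - u)\, du.
\end{equation*}
Iterated integration by parts in $u$, using that $\tilde{\chi}_{\ge t_{0}}$ is compactly supported and $\hat{\chi}_{\tau_{0}}$ is Schwartz, shows that the remainder together with all of its $(t, \tau)$-derivatives is $O(\ang{t}^{-\infty}\ang{\tau}^{-\infty})$, so its contribution to $a_{L}$ lies in $\Stsc^{-\infty, -\infty}$.

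It then remains to verify that the leading contribution $\chi_{\ge t_{0}}(t) \chi_{\tau_{0}}(\tau) p(z, \zeta)$ lies in $\Stsc^{-\infty, 0}$. The fiber decay $\rho_{\fib}^{\infty}$ follows from the compact $\tau$-support of $\chi_{\tau_{0}}$ combined with the Schwartz $\zeta$-decay of $p$, and the $\rho_{\infty}^{0}$ weight is immediate from boundedness in $(t,z)$. For smoothness on $\Ttsco^{*}[X; C]$: near $\iota^{+}$ one has $\chi_{\ge t_{0}} \equiv 1$, and $p(z, \zeta)$ lifts smoothly to $[X; C]$ with $z$ serving as a smooth coordinate on $\ff_{+}$, vanishing to infinite order at the corner $\ff_{+} \cap \mf$ thanks to the Schwartz decay of $w_{j}$; near $\iota^{-}$ one has $\chi_{\ge t_{0}} \equiv 0$; and near $\mf$ away from the poles, the Schwartz decay of $p$ in $|z|$ dominates any derivative of $\chi_{\ge t_{0}}$.

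The main obstacle is precisely this last smoothness point: $\chi_{\ge t_{0}}(t)$ is not a smooth function on the compactified spacetime $X$ (it is discontinuous on the spatial equator of $\pa X$), so one cannot immediately present $\Kf_{\tau_{0}}$ as the composition $M_{\chi} \circ F \circ M_{\chi}$ in $\Psitsc^{0,0} \cdot \Psitsc^{-\infty, 0} \cdot \Psitsc^{0,0}$. The resolution is that the Schwartz decay of the bound-state projector in the $z$-variable absorbs this defect: at every direction of approach to $\mf$ where $\chi_{\ge t_{0}}(t)$ is non-smooth, the factor $p(z, \zeta)$ already vanishes to infinite order in $|z|$, producing a symbol that is genuinely smooth on $\Ttsco^{*}[X; C]$ and therefore in $\Stsc^{-\infty, 0}$.
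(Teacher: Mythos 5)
Your approach---explicitly computing the left symbol and verifying its membership in $\Stsc^{-\infty,0}$---is the natural one, and you correctly identify the one real subtlety: the sharp time cutoff $\chi_{\geq t_0}(t)$ is not smooth as a function on $X$ (it has a jump along the spatial equator of $\partial X$), and it is the Schwartz decay of the bound-state projector kernel $p(z,\zeta)$ in $|z|$ that restores smoothness of the product on $[X;C]$. This is essentially the content of \cite{BDGR}*{Lemma 8.9}, and your treatment of the main term $\chi_{\geq t_0}(t)\chi_{\tau_0}(\tau)p(z,\zeta)$ is correct.

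One inaccuracy in the remainder bookkeeping: $\tilde\chi_{\geq t_0} = 1 - \chi_{\geq t_0}$ is \emph{not} compactly supported; it is supported on a half-line $\{t\lesssim t_0\}$. Consequently the inner expression $e^{-it\tau}\int e^{iu\tau}\,\tilde\chi_{\geq t_0}(u)\,\hat\chi_{\tau_0}(t-u)\,du$ does not, by itself, decay as $t\to-\infty$, so the step ``the remainder together with all its $(t,\tau)$-derivatives is $O(\langle t\rangle^{-\infty}\langle\tau\rangle^{-\infty})$'' is not correct as stated. The conclusion survives because the contribution of this term to $a_L$ carries the outer prefactor $\chi_{\geq t_0}(t)$, which vanishes for $t\lesssim t_0$; on the support $t\gtrsim t_0$, every $u$ in $\supp\tilde\chi_{\geq t_0}$ satisfies $t-u\geq t-t_0$, so the Schwartz decay of $\hat\chi_{\tau_0}$ gives the needed decay as $t\to+\infty$, while the integration by parts in $u$ still supplies the $\tau$-decay (the $u$-integral converges absolutely because the integrand decays rapidly in $u$ for each fixed $t$). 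With the support statement corrected, the remainder lands in $\Stsc^{-\infty,-\infty}$ and the argument closes as you intend.
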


To ensure that localizers $Q \in \Psitsc_{\sources,\delta}$ as defined in Definition~\ref{def:localizer-to-src}
do not modify the eigenspaces $E_{\pm}(\lambda(\tau))$, we will assume throughout this section
that $\WF_{\ff}'(QG_{\phi})$ is disjoint from the set of
eigenvalues of the limiting Hamiltonian, i.e.,
\begin{align*}
    \WF_{\ff}'(QG_\varphi) \cap \cB_{V,\pm} = \varnothing\,.
\end{align*}
The construction in Lemma~\ref{lem:there-are-localizers} can be easily modified by shrinking the support of $f$ to satisfy this additional condition.

Our spaces now include both the condition that the distributions
be above threshold at the sources, but also that the projections onto
the appropriate bound states be above threshold.  Specifically, if
again we have $s, \ell_0, \ell_+ \in \RR$ with
$\ell_0 < -1/2 < \ell_+ < 1/2$ and we let $Q_{\sources} G_\phi$ denotes the
source microlocalizer from Section \ref{sec:tsc localizers} with the modification just mentioned, and
$u \in H^{s, \ell_0}$ then Feynman-type distributions should satisfy
\begin{equation} \label{eq:BS Feynman condition}
    Q_{\sources} G_\phi u \in \Sobsc^{s, \ell_+},\ K^\kappa_{\tau} u \in \Sobsc^{s,\ell_+} \text{ for all } (\kappa, \tau) \in \cB_{V, \sources}\,.
\end{equation}
These conditions can be encapsulated in a $H^{s, \ell_0, \ell_+}_A$
space for appropriate $A$.  Indeed, let
\begin{equation} \label{eq:conditions together}
    \Qsrcpsi{\phi} = Q_{\sources} G_\phi + \cK_{\sources}
\end{equation}
where
\begin{equation}
    \cK_{\sources} \coloneqq \sum_{(\kappa, \tau) \in \cB_{\sources}} K^\kappa_{\tau}\,.
\end{equation}
If we (easily) arrange that all of the terms on the RHS have disjoint
operator wavefront set, then $\Qsrcpsi{\psi} u \in \Sobsc^{s, \ell_+}$ if and only
if the condition in \eqref{eq:BS Feynman condition} holds.

We now define our spaces as in the previous sections.  Since we use the same
method for estimating the $Q_{\sources}' G_\psi u$ terms, we also
use the notation $\Qsrcpsi{\phi}'$ for the operator as in
\eqref{eq:conditions together} whose RHS has $Q_{\sources}' G_{\psi}$
replacing $Q_{\sources} G_\phi$.  Thus we assume again
that $\WF' (Q_{\sources}) \subset \Ell(Q_{\sources}')$,
$\phi, \psi \in \CcI(\RR)$ are bump functions supported at $0$
with $\phi \psi = \phi$, and $B_{\sources} \in \Psitsc^{0,0}$ with $\Rad_{\ff,\sources} \subset \Ell_{\ff}(B_{\sources})$.
Define
\begin{equation}\label{eq:XY spaces BS}
\begin{aligned}
    \cY^{s,\ell_0,\ell_+} &\coloneqq \set*{ v \in \Sobsc^{s-1, \ell_0+ 1} \colon \Qsrcpsi{\psi}' v \in \Sobsc^{s-1, \ell_+ + 1}, B_{\sources} v \in \Sobsc^{s-2, \ell_+}}\,,\\
    \cX^{s,\ell_0,\ell_+} &\coloneqq \set*{ u \in H_{\Qsrcpsi{\phi}}^{s,\ell_0,\ell_+}  \colon P_V u \in \cY^{s,\ell_0, \ell_+} }\,.
\end{aligned}
\end{equation}

In our treatment of bound states, for technical reasons, we assume that
$V - V_{\pm}$ decays one order faster than the assumption used
above.  We have the following main theorems in the presence of bound states.
\begin{theorem}\label{thm:fredholm_strong_decay}
    Let $s, \ell_0, \ell_+, r \in \RR$ with $\ell_0 < -1/2 < \ell_+ < 1/2$ and $r \geq \max\set{2, \ell_+ - \ell_0}$.
    Let $V \in \rho_{\mf} \Psitsc^{1,0}$ be asymptotically static of order $r$ and
    assume the limiting Hamiltonians have purely absolutely continuous spectrum in $\ico{m^2, \infty}$, and finitely many bound states in $\ioo{-\infty, m^2}$
    and \eqref{eq:no thresh eigenvalue} holds.
    Moreover, assume that
    \begin{align*}
        V - V^* \in \Psitsc^{0,-2}(\RR^{n+1})\,.
    \end{align*}
    Then
    \begin{align*}
        P_V : \cX^{s,\ell_0, \ell_+} \to \cY^{s,\ell_0, \ell_+}
    \end{align*}
    is Fredholm.
\end{theorem}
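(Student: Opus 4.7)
The plan is to follow the outline of the no-bound-state proof (Theorem~\ref{thm:invertible_no_bound}): I will establish two global estimates — one for $u \in \cX^{s,\ell_0,\ell_+}$ controlling $\norm{u}_{\cX^{s,\ell_0,\ell_+}}$ by $\norm{P_V u}_{\cY^{s,\ell_0,\ell_+}}$ plus a compactly embedded remainder, and a dual estimate on $(\cY^{s,\ell_0,\ell_+})'$ for elements in $\ker P_V^*$. These together with a standard compactness argument yield Fredholm. The essential new ingredient is that the source operator is now $\Qsrcpsi{\phi} = Q_{\sources} G_\phi + \cK_{\sources}$, so every appearance of $Q_{\sources} G_\phi u$ in the estimates of Lemma~\ref{lem:finite-dim_kernel} needs to be replaced by the corresponding statement for $\Qsrcpsi{\phi} u$; the bound-state term $\cK_{\sources}$ has wavefront set localized near $\cB_{V,\sources} \subset W^\perp$, disjoint from $\WF'(Q_{\sources} G_\phi)$, so the two contributions can be analyzed separately by the arrangement made when defining the cutoffs.

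For the kernel estimate, I would split into two parts. First, the estimate controlling $\norm{u}_{s,\ell_0}$ by $\norm{\Qsrcpsi{\phi} u}_{s,\ell_0}$, $\norm{P_V u}_{s-1,\ell_0+1}$, and a residual term: away from the bound-state frequencies $\cB_{V,\sources}$ this is precisely the argument of Lemma~\ref{lem:finite-dim_kernel} (elliptic, propagation, below-threshold, and global elliptic estimates covering $\Tdoto^* X$); near $\cB_{V,\sources}$ the piece $\cK_{\sources} u$ is controlled by $\Qsrcpsi{\phi} u$ by design. Second, the above-threshold estimate: at $\Radtsc_{\sources}$ the radial estimate (Proposition~\ref{prop:tsc_radial_above}) gives $\norm{Q_{\sources} G_\phi u}_{s,\ell_+}$ as in the no-bound-state case, while at each $(\kappa,\tau_0) \in \cB_{V,\sources}$ I would use the $\tsc$-wavefront localization of $\Kfp_{\tau_0}$ and that $P_V \Kfp_{\tau_0} = P_{V_\pm} \Kfp_{\tau_0} + (V_{\pm} - V)\Kfp_{\tau_0}$, where $\Kfp_{\tau_0}$ is an exact projector annihilated by $P_{V_\pm}$ (modulo smoothing from the time cutoff $\chi_{\geq t_0}$), reducing the control of $\norm{\cK_{\sources} u}_{s,\ell_+}$ to $\norm{(V-V_\pm)\Kfp_{\tau_0} u}$ and a compact remainder. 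This is exactly where the strengthened decay $r \geq 2$ enters, via $(V-V_\pm) \in \Psitsc^{1,-r}$, allowing us to gain two orders of decay on the projected component (one for the weight order gain, one to absorb the $B_{\sources}$-type error), matching the weight gap $\ell_+ - \ell_0 \leq r$.

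For the cokernel, I would argue as in Lemma~\ref{lem:finite-dim_cokernel}: construct the mirror localizer $\widetilde{\Qsrcpsi{\phi}}$ associated to $\Radtsc_{\sinks}$ and $\cB_{V,\sinks}$, show that on $(\cY^{s,\ell_0,\ell_+})'$ this new localizer maps into $\Sobsc^{1-s, \ell_+'}$ with $\ell_+' = \min\{-\ell_0-1, -\ell_+\}$ via the Lemma~\ref{lem:localizer-mapping} argument (now needing the analogous disjointness of bound-state wavefront sets), and then apply the kernel estimate with sources and sinks swapped to $P_V^* : \widetilde{\cX} \to \widetilde{\cY}$, using that $V - V^* \in \Psitsc^{0,-2}$ makes $P_V$ self-adjoint to the required order and preserves the bound-state structure.

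The main obstacle is the estimate of $\cK_{\sources} u$ at the threshold weight $\ell_+ > -1/2$. Unlike the radial-point estimates which exploit Hamiltonian dynamics, the bound-state components are at-threshold Schwartz fiber-times-oscillating-time distributions, so the gain $\ell_+ - \ell_0$ cannot come from propagation; it must come entirely from the quantitative decay of $V - V_\pm$ through the relation $P_V \Kfp_{\tau_0} \in \Psitsc^{-\infty, -r}$ up to smoothing. Ensuring the book-keeping on weights closes — in particular that all error terms involving $B_{\sources} P_V u$, $\Qsrcpsi{\psi}' P_V u$, and the residual $\normres{u}$ terms can be absorbed under the condition $r \geq \max\{2, \ell_+ - \ell_0\}$ — is the delicate step. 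The other technical point is arranging microlocal disjointness between $Q_{\sources} G_\phi$ and $\cK_{\sources}$ so that the $\tsc$-wavefront decompositions used throughout do not mix the two contributions; this is guaranteed by the construction, provided the cutoffs in Lemma~\ref{lem:there-are-localizers} are chosen to vanish near $\cB_{V,\pm}$.
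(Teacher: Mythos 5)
Your high-level outline---a global Fredholm estimate plus compactness for the kernel, and a mirror-localizer argument dualizing to the cokernel---does match the paper's proof, as does the observation that $Q_{\sources}G_{\phi}$ and $\cK_{\sources}$ should be arranged to have disjoint operator wavefront sets.

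However, the central step in your treatment of the bound-state contribution is wrong: $\Kfp_{\tau_0}$ is \emph{not} ``annihilated by $P_{V_\pm}$ modulo smoothing.'' On the range of the spatial projector $\Pi_{\tau_0}$ onto $E_\pm(\lambda(\tau_0))$, the static operator $P_{V_\pm}$ acts as the genuine second-order ODE operator $D_t^2 - \tau_0^2$, equivalently as multiplication by $\tau^2 - \tau_0^2$ after the $\tau$-localization, which vanishes on $\supp\chi_{\tau_0}$ only at the single point $\tau=\tau_0$. So $P_V\Kfp_{\tau_0}$ is not in $\Psitsc^{-\infty,-r}$ up to smoothing, and the claimed reduction of $\norm{\cK_{\sources}u}_{s,\ell_+}$ to $\norm{(V-V_\pm)\Kfp_{\tau_0}u}$ plus a compact term fails. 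What is actually required is a propagation-type estimate for the ODE $D_t^2-\tau_0^2$ along the bound-state ray---precisely the content of Lemma~\ref{thm:elliptic on kernel}, which your argument effectively tries to bypass via an algebraic identity that does not hold. There is a second gap: the first half of your kernel estimate (controlling $\norm{u}_{s,\ell_0}$) cannot be ``precisely the argument of Lemma~\ref{lem:finite-dim_kernel}'' away from $\cB_{V,\sources}$, because $P_V$ also fails to be $\tsc$-elliptic at $\cB_{V,\sinks}$ in the interior of $W^\perp$, and there you have no a priori decay from the $\cX$-norm. The elliptic, propagation, and radial pieces of that lemma's partition of unity simply do not cover these interior-of-$W^\perp$ frequencies. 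The paper resolves both problems at once by adjoining microlocalizers $B_{5,\pm}$, $B_{6,\pm}$ elliptic on the interior of $W^{\perp}_{\pm}$ between the two radial endpoints, and invoking \emph{both} the below-threshold estimate \eqref{eq:ODE estimate} and the above-threshold estimate \eqref{eq:ODE estimate2} of Lemma~\ref{thm:elliptic on kernel} on these pieces. You have supplied neither the below-threshold half nor the extra partition pieces, and the role of the strengthened assumption $r \geq 2$ (needed to control commutators of $\Kfp_{\tau_0}$ with $V-V_\pm$ in passing between $\Kfp_{\tau_0}P_V u$ and $P_V\Kfp_{\tau_0}u$) is consequently misattributed.
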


\begin{theorem}\label{thm:invertible strong decay}
    Let $s, \ell_0, \ell_+, r \in \RR$ with $\ell_0 < -1/2 < \ell_+ < 1/2$ and $r \geq \max\set{2, \ell_+ - \ell_0}$.
    Let $V \in \rho_{\mf} \Difftsc^{1,0}$ be asymptotically static of order $r$ and
    \begin{itemize}
        \item $V$ is self-adjoint, $V = V^*$,
        \item limiting Hamiltonians have purely absolutely continuous spectrum in $\ico{m^2, \infty}$, and finitely many bound states in $\ioo{0, m^2}$,
            in particular $H_{V_\pm} > 0$.
    \end{itemize}
    then $P_V : \cX^{s,\ell_0, \ell_+} \to \cY^{s,\ell_0, \ell_+}$ is
    invertible.
    We denote the inverse by $\PVinv$ and for $f \in \CdmI$ the function $\PVinv f \in \CmI$ is independent of the choice of parameters $s,\ell_0, \ell_+$ and microlocal cutoff $Q G_\phi$.
    For any $f \in \CdmI$, we have
    \begin{align*}
        \WFtsc(\PVinv f) \subset \WFtsc(f) \cup \bigcup_{s \geq 0} \Phitsc_s \left( \WFtsc(f) \cap \Chartsc(P_0) \right) \cup \Radtsc_{\sinks} \cup \cB_{V, \sinks}\,.
    \end{align*}

    The same conclusion holds if $V = V^{*} \in
    \rho_{\mf}\Difftsc^{1}$ is globally static, i.e., $V = V(x)$.  In
    particular the positivity condition $H_{V} > 0$ is not needed in
    the static case, though we still assume $0 \not \in \cB_{V}$.
\end{theorem}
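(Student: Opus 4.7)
The plan is to follow the structural outline of the proof of Theorem~\ref{thm:invertible_no_bound}, with adaptations needed to accommodate the bound-state projections in the definitions~\eqref{eq:XY spaces BS} of $\cX^{s,\ell_0,\ell_+}$ and $\cY^{s,\ell_0,\ell_+}$. Theorem~\ref{thm:fredholm_strong_decay} already provides that $P_V : \cX^{s,\ell_0,\ell_+} \to \cY^{s,\ell_0,\ell_+}$ is Fredholm, so the remaining tasks are (i) triviality of the kernel, (ii) triviality of the cokernel, (iii) independence of the inverse from the choices in the definition of the spaces, and (iv) the microlocal Hadamard-type wavefront set inclusion.

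The crux of the argument is to promote any $u \in \ker P_V$ into a causal Feynman space. By definition $u$ is above threshold at $\Radtsc_{\sources}$ and the projections $K^{\kappa}_{\tau} u$ with $(\kappa,\tau) \in \cB_{V,\sources}$ are above threshold; I would upgrade $u$ to have the same behavior at $\Radtsc_{\sinks}$ and at $\cB_{V,\sinks}$. At the radial sinks this is the positive-commutator argument of Proposition~\ref{prop:smooth_kernel}, which uses $V = V^{*}$ and $V \in \rho_{\mf}\Difftsc^{1}$. At the bound-state sinks I would adapt the commutator--pairing argument from \cite{BDGR}*{Sect.~8.2}: insertion of the Fourier-localizing projectors $\Kf_{\tau_0}, \Kp_{\tau_0}$ together with $V = V^{*}$ produces an identity whose positivity is ensured by the hypothesis $H_{V_\pm} > 0$ (which in particular gives $0 \notin \cB_{V,\pm}$, avoiding any threshold obstruction). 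Once $u$ lies in a causal space for some admissible variable-order weight, the causal invertibility result \cite{BDGR}*{Theorem~8.2}, which holds under precisely this positivity assumption, forces $u \equiv 0$.

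For the cokernel one applies the same strategy to $P_V^{*} = P_V$ on $(\cY^{s,\ell_0,\ell_+})'$ with sources and sinks (and the corresponding $\cB_V$ sets) interchanged, via duality as in Lemma~\ref{lem:localizer-mapping} and Lemma~\ref{lem:finite-dim_cokernel}; the time-reversal symmetry of the causal problem yields the analogous triviality. Parameter independence follows exactly as in Proposition~\ref{prop:unique_inverse}: if $u_1, u_2$ are Feynman solutions in possibly different admissible spaces with $P_V u_1 = P_V u_2$, one embeds both into a common $\cX^{s,\ell_0,\ell_+}$ and applies invertibility there to conclude $u_1 = u_2$.

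For the wavefront set inclusion, take $f \in \CdmI$ and $u = \PVinv f$. The defining properties of $\cX^{s,\ell_0,\ell_+}$ force $u$ to be $\tsc$-regular at $\Radtsc_{\sources}$ and at $\cB_{V,\sources}$. Standard $\tsc$-propagation (Proposition~\ref{prop:propagation_localized}) together with $\tsc$-elliptic regularity propagates this regularity forward along $\Hamsc_p$, establishing $\tsc$-smoothness at every covector not in $\WFtsc(f)$, not on a forward flowline from $\WFtsc(f) \cap \Chartsc(P_0)$, and not in $\Radtsc_{\sinks} \cup \cB_{V,\sinks}$. I expect the main obstacle to be the propagation statement into the sink bound states: this requires the dedicated estimates for $K^{\kappa}_{\tau}$-localized solutions from \cite{BDGR}*{Sect.~8.2} applied with the reversed direction of regularity propagation appropriate to the Feynman problem. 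Finally, for the globally static case $V = V(z) = V^{*} \in \rho_{\mf}\Difftsc^{1}$, the partial Fourier transform in $t$ decouples the $\tau$-slices: $\tau^{2} - H_V$ is boundedly invertible for $\tau \notin \cB_V$ by the spectral theorem, while at each bound-state frequency the time-translation invariance forces any prospective kernel element to have bound-state coefficients satisfying a constant-coefficient ODE in $t$ incompatible with the one-sided Feynman prescription encoded by $\Kf$ and $\Kp$; consequently no positivity of $H_V$ is required.
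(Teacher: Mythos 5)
Your proposal follows the same strategy as the paper's proof: Fredholm from Theorem~\ref{thm:fredholm_strong_decay}, then upgrade any $u\in\ker P_V$ to above-threshold regularity at the sinks via a positive-commutator argument, embed $u$ into a causal space, and invoke the causal invertibility theorem from \cite{BDGR} to conclude $u\equiv 0$; cokernel by duality; parameter independence and the Hadamard inclusion as in the no-bound-state case; static case by the excluded-asymptotics observation. One structural difference is how the bound-state sinks are handled: you propose to treat $\Radtsc_{\sinks}$ and $\cB_{V,\sinks}$ by two separate commutator arguments, the second inserting the projectors $\Kfp_{\tau_0}$ directly. The paper instead handles both in a single commutator by modifying the symbol $b$ in the operator $B=\Op_{L}(\tau^{1/2}b(\tau))G_{\psi}$ so that $b\equiv 1$ on $\cB_{V,\sinks}\cup\Rad_{\ff,\sinks}$ while $\supp b\cap(-\infty,0]=\varnothing$ (which is where the positivity $H_{V_\pm}>0$ enters, ensuring $\cB_{V,\sinks}$ is bounded away from $\tau=0$ on the correct side). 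This unified choice is cleaner and avoids the delicate question of what regularity is available at $\cB_{V,\sinks}$ during the radial-sink commutator step; it is worth noting that a separate treatment would have to address that circularity. Also, the relevant reference for the causal-invertibility input with bound states is \cite{BDGR}*{Theorem 8.3} rather than Theorem 8.2.
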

To prove the Fredholm result we again use a global Fredholm
estimate, and the main result we use from our previous work
regarding the $\Kfp_{\tau}$ is the following lemma.
\begin{lemma}\label{thm:elliptic on kernel}
  Let $\ell \in \RR$, $\tau_0 \in \cB_{V,\pm}$.  Below we state the estimate near $\NP$, but the
  same is true near $\SP$ with appropriate $+$'s replaced by $-$'s.

  Below threshold: Assume $\ell < -1/2$.
  Then for any $s, M, N \in \RR$ there is $C > 0$ such that, provided $\Kf_{\tau_0} P_V u \in \Sobsc^{s, \ell + 1}$, then we have $\Kf_{\tau_0} u \in \Sobsc^{s, \ell}$
  \begin{equation}
      \norm{\Kf_{\tau_0} u}_{s, \ell} \le
      C( \norm{\Kf_{\tau_0} P_Vu}_{s,\ell + 1} +  \normres{u})\,.\label{eq:ODE estimate}
  \end{equation}

  Above threshold: Assume $\ell > -1/2$.  For any
  $s, \ell', M, N \in \RR$ with $\ell > \ell' > -1/2$, there is $C$
  such that, if $\Kf_{\tau_0} u \in \Sobsc^{s, \ell'}$ and
  $P_V \Kf_{\tau_0} u \in \Sobsc^{s, \ell + 1}$, then $\Kf_{\tau_0}u \in \Sobsc^{s, \ell}$ and
  \begin{equation}
      \norm{\Kf_{\tau_0} u}_{s, \ell} \le C(
      \norm{\Kf_{\tau_0} P_V u}_{s,\ell + 1} +
      \norm{\Kf_{\tau_0} u}_{-N,\ell'} + \normres{u})\,.\label{eq:ODE estimate2}
  \end{equation}
\end{lemma}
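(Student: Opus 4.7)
The plan is to reduce both estimates to one-dimensional scattering radial point estimates at $\tau = \tau_0$ for the scalar operator $D_t^2 - \tau_0^2$ on $\RR_t$, by exploiting that $\Kf_{\tau_0}$ projects, fiber-wise in $z$, onto the finite-dimensional eigenspace $E_+(\lambda(\tau_0))$, on which $H_{V_+}$ acts as multiplication by $\tau_0^2$.

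Fix an orthonormal basis $w_1, \dots, w_k$ of $E_+(\lambda(\tau_0)) \subset \schwartz(\RR^n_z)$. The explicit kernel formula \eqref{eq:K future tau 0} shows
\begin{align*}
\Kf_{\tau_0} u(t,z) = \chi_{\geq t_0}(t) \sum_{j=1}^k f_j(t)\, w_j(z),
\end{align*}
where the $t$-Fourier transform of each $f_j$ is supported in $\supp \chi_{\tau_0}$. Applying $P_V = D_t^2 - H_V$ and using $H_{V_+} w_j = \tau_0^2 w_j$ together with $V - V_+ \in \Psitsc^{0,-r}$, $r \geq 1$, I would establish the identity
\begin{align*}
P_V \Kf_{\tau_0} u = \chi_{\geq t_0}(t)\sum_j \bigl[(D_t^2 - \tau_0^2) f_j\bigr] w_j + Ru,
\end{align*}
where the remainder $R$ collects contributions from differentiating $\chi_{\geq t_0}$ (residual because it is compactly supported in the interior) and from $(V - V_+)$ acting on the bound states (of order $(-\infty, -r)$). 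The same computation, applied in reverse, shows that the commutator $[\Kf_{\tau_0}, P_V]$ is residual plus an operator of order $(-\infty, -r)$, so $\Kf_{\tau_0} P_V u$ and $P_V \Kf_{\tau_0} u$ differ only by terms absorbable into $\normres{u}$.

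The reduced one-dimensional problem is then a radial point estimate for $D_t^2 - \tau_0^2$ on $\overline{\RR_t}$. Its scattering characteristic set at $1/t = 0$ is $\set{\tau = \pm\tau_0}$; after shrinking $\supp \chi_{\tau_0}$ only the radial point at $\tau = \tau_0$ is seen by each $f_j$, and the threshold weight there is exactly $-1/2$. In the below threshold case $\ell < -1/2$, Melrose's below-threshold scattering radial point estimate applied to each $f_j$ on $\RR_t$ gives $\norm{f_j}_{s,\ell} \lesssim \norm{(D_t^2 - \tau_0^2) f_j}_{s,\ell+1} + \normres{f_j}$; summing over $j$ and substituting the identity above yields \eqref{eq:ODE estimate}. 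In the above threshold case $\ell > -1/2$, the analogous above-threshold radial point estimate supplies the extra $\norm{f_j}_{-N,\ell'}$ a priori term, yielding \eqref{eq:ODE estimate2}.

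The main technical obstacle will be the careful accounting of the commutator $[\Kf_{\tau_0}, P_V]$ in the $\tsc$-calculus: verifying that the non-commutativity of the spectral projector $\Pi_{\tau_0}$ with $V - V_+$ contributes only terms of order $(-\infty,-r)$, and translating the one-dimensional scattering Sobolev norms on $\RR_t$ cleanly to the ambient $\Sobsc$-norms on $\RR^{n+1}$ through the Schwartz tails of the eigenfunctions $w_j$. Both rely on the fact $\Kf_{\tau_0} \in \Psitsc^{-\infty, 0}$ established just above (as in \cite{BDGR}*{Lemma 8.9}), which ensures clean mapping and composition properties for the reduction.
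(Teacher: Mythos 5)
The paper does not prove this lemma here: the text immediately before it states that it is ``the main result we use from our previous work,'' i.e., it is imported from \cite{BDGR} (Sect.~8.2), and only the Fredholm consequence is proved in the present text. So there is no in-paper proof to compare against. That said, your strategy---expand $\Kf_{\tau_0}u = \chi_{\geq t_0}(t)\sum_j f_j(t)w_j(z)$ using an orthonormal basis of $E_+(\lambda(\tau_0))$, use $H_{V_+}w_j = \tau_0^2 w_j$ to reduce $P_V$ on this range to the scalar ODE operator $D_t^2 - \tau_0^2$, and then invoke one-dimensional scattering radial-point estimates at the isolated radial point $\{x=0,\tau=\tau_0\}$---is precisely the structure suggested by the lemma's label (``elliptic on kernel'') and by the quoted fact $\Kfp_{\tau_0}\in\Psitsc^{-\infty,0}$. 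The decomposition you write down is correct as stated, and the matching of the ambient $\Sobsc^{s,\ell}(\RR^{n+1})$ norms with the one-dimensional norms of the $f_j$ works cleanly because the $w_j$ are Schwartz and the sum is finite.

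Two points deserve sharper attention than your outline gives them. First, the commutator bookkeeping. After writing $(D_t^2-\tau_0^2)\Kf_{\tau_0}u = \Kf_{\tau_0}P_Vu - [\Kf_{\tau_0},P_V]u + (V-V_+)\Kf_{\tau_0}u$, the pieces $[\Kf_{\tau_0},V-V_+]u$ and $(V-V_+)\Kf_{\tau_0}u$ are of the form (element of $\Psitsc^{-\infty,-r}$) applied to $u$. These produce errors controlled by $\norm{u}_{-N,\,\ell+1-r}$, which is \emph{not} bounded by $\normres{u}$ once $M > r-\ell-1$; so the assertion that they are ``absorbable into $\normres{u}$'' is only literally true for $M$ in a bounded range, and you should either constrain $M$, or absorb the $(V-V_+)\Kf_{\tau_0}u$ term back into the left-hand side by an iteration/interpolation argument as is done elsewhere in the paper (cf.\ the use of \cite{BDGR}*{Eq.~(2.39)} in the proof of Lemma~\ref{lem:finite-dim_kernel}). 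The $[\Kf_{\tau_0},D_t^2]$ piece is genuinely residual (compactly supported in $t$) and $[\Kf_{\tau_0},H_{V_+}]=0$ exactly, so the only nontrivial error comes from $V-V_+$, and this is why the hypothesis $r\geq 2$ in Section~\ref{sec:bound states} is needed. Second, your appeal to ``Melrose's below-threshold scattering radial point estimate'' should be stated more carefully: here the radial point $\{x=0,\tau=\tau_0\}$ is \emph{isolated} in the characteristic set within $\partial\Tsco^*\overline{\RR_t}$ (after shrinking $\supp\chi_{\tau_0}$), so there is no propagation term to import from elsewhere; the estimate is the degenerate version in which the $E$-term drops out. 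Equivalently, after factoring $D_t^2-\tau_0^2 = (D_t-\tau_0)(D_t+\tau_0)$ and using that $D_t+\tau_0$ is elliptic on $\supp\chi_{\tau_0}$, the problem becomes a first-order constant-coefficient ODE and the threshold weight $-1/2$ is that of the corresponding Fourier multiplier $(\tau-\tau_0)^{-1}$. Stating this explicitly would make the argument self-contained.
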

  
\begin{proof}[Proof of Fredholm property]
  To get the closed range and finite dimensional kernel statement, we
  can use a global Fredholm estimate in Lemma
  \ref{lem:finite-dim_kernel} but for the $\cX, \cY$ spaces in
  \eqref{eq:XY spaces BS}, where now
  \begin{equation*}
        \norm{v}_{\cY^{s,\ell_0,\ell_+}}
        \sim
    \norm{v}_{s-1,\ell_0 + 1} +
    \norm{Q_{\sources}' G_{\psi} v}_{s - 1,\ell_+ + 1} +
    \norm{B_{\sources} v}_{s - 2,\ell_+} +
    \norm{\cK_{\sources} v}_{s - 1,\ell_+ + 1} \,.
  \end{equation*}
  As in the case of the causal propagators \cite{BDGR}*{p.~91}, we modify the partition of unity by introducing additional microlocalizers $B_{5,\pm}$ and $B_{6,\pm}$
  such that $B_{5,\pm}$ is elliptic on $(-m + \eps', \delta') \subset W^\perp_{\pm}$ and $B_{6,\pm}$ is elliptic on $(-\delta', m - \eps')$.
  For the microlocalizers $B_{5,\pm}$ and $B_{6,\pm}$, we use the estimates from Lemma~\ref{thm:elliptic on kernel}.
  We arrive at the estimate
  \begin{align*}
      \norm{u}_{\cX^{s,\ell_0,\ell_+}} \lesssim \norm{P_V u}_{\cY^{s,\ell_0,\ell_+}} + \normres{u}
  \end{align*}
  from which we deduce finite dimensionality of the kernel and that the range is closed.

  By the same argument as in Lemma~\ref{lem:finite-dim_cokernel}, we obtain that cokernel is finite dimensional as well, which implies that $P_V$ is Fredholm as claimed.
\end{proof}

\begin{proposition}\label{prop:smooth_kernel-bound_states}
    Let $V$ satisfy the assumptions in Theorem \ref{thm:invertible
      strong decay}. Then, for some $\delta > 0$,
    \begin{align*}
        \ker_{\cX^{s,\ell_0, \ell_+}}P_V \subset \Sobsc^{s,-1/2 + \delta}.
    \end{align*}
\end{proposition}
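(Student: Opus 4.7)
The plan is to extend the positive commutator argument of Proposition~\ref{prop:smooth_kernel}, widening the spectral cutoff defining the operator $B$ so that it becomes $\tsc$-elliptic at every sink-side bound state frequency in addition to the continuous-spectrum radial sinks. Work in the region $t \gg 0$; the case $t \ll 0$ is symmetric under the interchange $\NP \leftrightarrow \SP$ and $\tau \leftrightarrow -\tau$. Under the hypotheses of Theorem~\ref{thm:invertible strong decay}, $\cB_{V,+}$ is finite and $0 \notin \cB_{V,+}$, so we may select $\eps_0 > 0$ strictly smaller than every positive element of $\cB_{V,+}$. Choose $b \in \CI(\mathbb{R})$ non-negative with $b \equiv 1$ on $[\eps_0, \infty)$ and $b \equiv 0$ on $(-\infty, \eps_0/2]$, and set $B = \Op_L(\tau^{1/2} b(\tau)) G_\psi$. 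The decomposition $x^2 D_x = B^* B + E_1 + E_2 + R$ with $E_1 = (x^2 D_x)(\id - G_\psi^2)$, $E_2 = \Op_L(\tau(1 - b(\tau)^2)) G_\psi^2$, and $R \in \Psitsc^{0, -1}$ still holds.

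The positive commutator identity $\langle i[P_V, \chi_\eps] u, u\rangle = 0$, which uses $P_V = P_V^*$ and $P_V u = 0$, then yields, just as in Proposition~\ref{prop:smooth_kernel}, the inequality
\begin{equation*}
\|x^{-\alpha}\varphi(x/\eps) B u\|^2 \lesssim \bigl|\langle x^{-\alpha}\varphi(x/\eps) (E_1 + E_2) u, x^{-\alpha}\varphi(x/\eps) u\rangle\bigr| + |\langle F_\eps'' u, u\rangle|,
\end{equation*}
with $F_\eps'' \in \Psitsc^{0, 2\alpha - 1}$ uniformly bounded in $\eps$. The term $E_1$ is handled via Proposition~\ref{prop:elliptic_Gpsi_estimate} exactly as in the original proof. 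The key new observation concerns $E_2$: its $\tsc$-wavefront set over the poles lies in $\tau \in (\eps_0/2, \eps_0)$, and since by our choice of $\eps_0$ the value $\tau^2$ there is strictly below every eigenvalue of $H_{V_+}$ and below $m^2$, the indicial operator $\ffsymbz(P_V)(\tau) = \tau^2 - H_{V_+}$ is invertible on $\WFtsc'(E_2)$. Thus $P_V$ is $\tsc$-elliptic on $\WFtsc'(E_2)$, and $\tsc$-elliptic regularity applied to $P_V u = 0$ gives $E_2 u \in \Sobsc^{N, N}$ for all $N$.

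Taking $\alpha = -\delta$ with $\delta > 0$ small and letting $\eps \to 0$ by monotone convergence, the uniform bound on the right-hand side yields $B u \in \Sobsc^{0, \delta}$, so $u$ is above threshold microlocally on $\Elltsc(B)$. Since the widened $B$ is now elliptic at every sink frequency at $\NP$ including the bound state frequencies in $\cB_{V,+} \cap (0, \infty)$, this excludes any bound state asymptotic component of $u$ at the sinks: a component $c e^{i\tau_0 t} w(z)$ with $\tau_0 \in \cB_{V,+} \cap (0, \infty)$ would make $\|x^{-\alpha}\varphi(x/\eps) B u\|^2$ grow like $c^2 \tau_0 \eps^{-2\alpha - 1}$, contradicting the uniform bound. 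A bootstrap using Proposition~\ref{prop:tsc_radial_above} together with Lemma~\ref{thm:elliptic on kernel} applied to each $\Kfp_{\tau_0}$ for $\tau_0 \in \cB_{V,\sinks}$ then upgrades this to $u \in \Sobsc^{s, -1/2 + \delta}$ as claimed. The principal obstacle is verifying that the widened $b$ does not break the error control in the commutator; this is overcome precisely by the hypothesis $0 \notin \cB_{V,\pm}$ together with the finiteness of $\cB_{V,\pm}$, which provides the spectral gap around $0$ needed to place $E_2$ in the $\tsc$-elliptic set of $P_V$.
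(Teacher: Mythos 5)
Your overall strategy is the same as the paper's: widen the cutoff $b$ so that $b \equiv 1$ on $\Rad_{\ff,\sinks} \cup \cB_{V,\sinks}$ while keeping $\supp b \cap (-\infty, 0] = \varnothing$, which is possible precisely because $0 \notin \cB_{V,\pm}$ and $\cB_{V,\pm}$ is finite, and then rerun the positive commutator argument of Proposition~\ref{prop:smooth_kernel}. That part is fine. There is, however, a genuine error in your treatment of $E_2$.

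You assert that $\WF_{\ff}'(E_2) \subset (\eps_0/2, \eps_0)$, so that $P_V$ is $\tsc$-elliptic on $\WFtsc'(E_2)$ and $E_2 u$ is Schwartz by $\tsc$-elliptic regularity. This is false. With your choice of $b$ one has $1 - b^2 \equiv 1$ on $(-\infty, \eps_0/2]$, so the symbol $\tau(1-b(\tau)^2)$ of the first factor is nonvanishing on all of $\tau \le \eps_0/2$, not just on $(\eps_0/2, \eps_0)$. Intersecting with $\WF_{\ff}'(G_\psi^2)$ still leaves the radial source $\tau = -m$ at $\NP$, the source bound-state frequencies in $\cB_{V,\sources}$, and the half-line $\tau \le -m$ in $\WF_{\ff}'(E_2)$ — none of which lie in $\Elltsc(P_V)$. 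Moreover, away from the poles the factor $G_\psi^2$ is microsupported in a neighborhood of $\Chartsc(P_V)$, so $\WF_{\mf}'(E_2)$ and $\WF_{\fib}'(E_2)$ also meet the characteristic set. So $\tsc$-elliptic regularity simply does not apply here. What the paper uses instead (via Lemma~\ref{lem:global_regularity}, together with Lemma~\ref{thm:elliptic on kernel} in the bound-state setting) is that $\WFtsc'(E_2)$ is disjoint from $\Radtsc_{\sinks} \cup \cB_{V,\sinks}$: that, combined with the hypothesis $u \in \cX^{s,\ell_0,\ell_+}$ (which already imposes above-threshold decay at the sources and source bound states) and propagation/above-threshold radial estimates, is what forces $E_2 u$ to be above threshold. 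You should replace the ellipticity claim with this propagation argument; as written, the step fails. (A minor separate remark: you write $Bu \in \Sobsc^{0,\delta}$ after setting $\alpha = -\delta$; the weight you actually get is $\Sobsc^{0,-\delta}$ with $\delta \in (0,1/2)$, which is still above threshold, so this is just a sign slip.)
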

\begin{proof}
The proof is identical to that of
Proposition~\ref{prop:smooth_kernel} with the modification that, for the
operator $B = \Op_{L}(\tau^{1/2}b(\tau))G_{\psi} $, we must have $b
\equiv 1$ on $\cB_{V, \sinks} \cup \Rad_{\ff,{}\sinks}$ and $\supp b
\cap (-\infty, 0] = \varnothing$.  This is easily arranged, for example by choosing $c > 0$
with $c < \min \set{ \abs{\tau_{0}} : \tau_{0} \in \cB_{V, \sinks}}$.  Then
for example at $\NP$ we can take $b(\tau) \equiv 1$ for $c \le \tau
\le m + c$ and $b(\tau) = 0$ for $\tau < c / 2$ and $\tau > m + 2c$.
\end{proof}

\begin{proof}[Proof of invertibility]
    We now assume that the spatial Hamiltonians satisfy
    \begin{equation}\label{eq:positive}  
        H_{V_{\pm}} > 0.
    \end{equation}
    The proof that the kernel and cokernel are trivial is the same as
    for the case with no bound states.
    By Proposition~\ref{prop:smooth_kernel-bound_states}, we know that every $u \in \ker P_V$
    is a Schwartz function and therefore satisfies the above threshold
    decay rate on the entire radial set.
    Hence $u$ is an element in a causal Sobolev space $\cX^{s,\ellvar}$ for any choice of
    $s, \ellvar$ and by the theorem for causal propagators~\cite{BDGR}*{Theorem~8.3}
    the operator $P_V : \cX^{s,\ellvar} \to \cY^{s,\ellvar}$ is invertible and
    therefore $u \equiv 0$.

    The uniqueness and the wavefront property follow from the same arguments as in the case of no bound states.

    In the static case one sees directly that there are no global
    elements in the kernel or cokernel as their projection onto the
    bound states would have the excluded asymptotics at one of the two
    time-like infinities.
\end{proof}

\begin{bibdiv}
\begin{biblist}
\bib{BS2019}{article}{
  author={B\"ar, Ch.},
  author={Strohmaier, A.},
  title={An index theorem for Lorentzian manifolds with compact spacelike Cauchy boundary},
  journal={Amer. J. Math.},
  volume={141},
  date={2019},
  number={5},
  pages={1421--1455},
  issn={0002-9327},
  review={\MR {4011805}},
  doi={10.1353/ajm.2019.0037},
}

\bib{BDGR}{article}{
  author={Baskin, D.},
  author={Doll, M.},
  author={Gell-Redman, J.},
  title={The Klein-Gordon equation on asymptotically Minkowski spacetimes: causal propagators},
  date={2024},
  eprint={arXiv:2409.01134v2},
}

\bib{BVW}{article}{
  author={Baskin, D.},
  author={Vasy, A.},
  author={Wunsch, J.},
  title={Asymptotics of radiation fields in asymptotically Minkowski space},
  journal={Amer. J. Math.},
  volume={137},
  date={2015},
  number={5},
  pages={1293--1364},
  review={\MR {3405869}},
  doi={10.1353/ajm.2015.0033},
}

\bib{DS2018}{article}{
  author={Derezi\'nski, J.},
  author={Siemssen, D.},
  title={Feynman propagators on static spacetimes},
  journal={Rev. Math. Phys.},
  volume={30},
  date={2018},
  number={3},
  pages={1850006, 23},
  issn={0129-055X},
  review={\MR {3770965}},
  doi={10.1142/S0129055X1850006X},
}

\bib{DS2019}{article}{
  author={Derezi\'nski, J.},
  author={Siemssen, D.},
  title={An evolution equation approach to the Klein-Gordon operator on curved spacetime},
  journal={Pure Appl. Anal.},
  volume={1},
  date={2019},
  number={2},
  pages={215--261},
  issn={2578-5885},
  review={\MR {3949374}},
  doi={10.2140/paa.2019.1.215},
}

\bib{DH1972}{article}{
  author={Duistermaat, J. J.},
  author={H\"ormander, L.},
  title={Fourier integral operators. II},
  journal={Acta Math.},
  volume={128},
  date={1972},
  number={3-4},
  pages={183--269},
  issn={0001-5962},
  review={\MR {0388464}},
  doi={10.1007/BF02392165},
}

\bib{GRHV}{article}{
  author={Gell-Redman, J.},
  author={Haber, N.},
  author={Vasy, A.},
  title={The Feynman propagator on perturbations of Minkowski space},
  journal={Comm. Math. Phys.},
  volume={342},
  date={2016},
  number={1},
  pages={333--384},
  issn={0010-3616},
  review={\MR {3455153}},
  doi={10.1007/s00220-015-2520-8},
}

\bib{GIS}{article}{
  author={G\'erard, Ch.},
  author={Isozaki, H.},
  author={Skibsted, E.},
  title={Commutator algebra and resolvent estimates},
  conference={ title={Spectral and scattering theory and applications}, },
  book={ series={Adv. Stud. Pure Math.}, volume={23}, publisher={Math. Soc. Japan, Tokyo}, },
  isbn={4-314-10107-5},
  date={1994},
  pages={69--82},
  review={\MR {1275395}},
  doi={10.2969/aspm/02310069},
}

\bib{GW2019}{article}{
  author={G\'erard, Ch.},
  author={Wrochna, M.},
  title={The massive Feynman propagator on asymptotically Minkowski spacetimes},
  journal={Amer. J. Math.},
  volume={141},
  date={2019},
  number={6},
  pages={1501--1546},
  issn={0002-9327},
  review={\MR {4030522}},
  doi={10.1353/ajm.2019.0042},
}

\bib{GW2020}{article}{
  author={G\'erard, Ch.},
  author={Wrochna, M.},
  title={The massive Feynman propagator on asymptotically Minkowski spacetimes II},
  journal={Int. Math. Res. Not. IMRN},
  date={2020},
  number={20},
  pages={6856--6870},
  issn={1073-7928},
  review={\MR {4172671}},
  doi={10.1093/imrn/rnz007},
}

\bib{HiTrans}{article}{
  author={Hintz, P.},
  title={Microlocal analysis of operators with asymptotic translation- and dilation-invariances},
  date={2023},
  eprint={arXiv:2302.13803},
}

\bib{HiLWAFS}{article}{
  author={Hintz, P.},
  title={Linear waves on asymptotically flat spacetimes},
  date={2023},
  eprint={arXiv:2302.14647},
}

\bib{HiGluing}{article}{
  author={Hintz, P.},
  title={Gluing small black holes along timelike geodesics I: formal solution},
  date={2023},
  eprint={arXiv:2306.07409},
}

\bib{HiVa2015}{article}{
  author={Hintz, P.},
  author={Vasy, A.},
  title={Semilinear wave equations on asymptotically de Sitter, Kerr--de Sitter and Minkowski spacetimes},
  journal={Anal. PDE},
  volume={8},
  date={2015},
  number={8},
  pages={1807--1890},
  issn={2157-5045},
  review={\MR {3441208}},
  doi={10.2140/apde.2015.8.1807},
}

\bib{HiVa2023}{article}{
  author={Hintz, P.},
  author={Vasy, A.},
  title={Microlocal analysis near null infinity in asymptotically flat spacetimes},
  date={2023},
  eprint={arXiv:2302.14613},
}

\bib{Hormander1}{book}{
  author={H\"ormander, L.},
  title={The analysis of linear partial differential operators. I},
  series={Classics in Mathematics},
  note={Distribution theory and Fourier analysis; Reprint of the second (1990) edition [Springer, Berlin; MR1065993 (91m:35001a)]},
  publisher={Springer-Verlag, Berlin},
  date={2003},
  pages={x+440},
  isbn={3-540-00662-1},
  review={\MR {1996773}},
}

\bib{IS2012}{article}{
  author={Islam, O.},
  author={Strohmaier, A.},
  title={On microlocalisation and the construction of Feynman Propagators for normally hyperbolic operators},
  date={2012},
  eprint={arXiv:2012.09767},
}

\bib{Ma}{article}{
  author={Ma, Y.},
  title={A second microlocalization for the three-body calculus},
  date={2024},
  eprint={arXiv:2411.11771},
}

\bib{Melrose94}{article}{
  author={Melrose, R. B.},
  title={Spectral and scattering theory for the Laplacian on asymptotically Euclidian spaces},
  conference={ title={Spectral and scattering theory}, address={Sanda}, date={1992}, },
  book={ series={Lecture Notes in Pure and Appl. Math.}, volume={161}, publisher={Dekker, New York}, },
  isbn={0-8247-9251-3},
  date={1994},
  pages={85--130},
  review={\MR {1291640}},
}

\bib{NT2023}{article}{
  author={Nakamura, Sh.},
  author={Taira, K.},
  title={Essential self-adjointness of Klein-Gordon type operators on asymptotically static, Cauchy-compact spacetimes},
  journal={Comm. Math. Phys.},
  volume={398},
  date={2023},
  number={3},
  pages={1153--1169},
  issn={0010-3616},
  review={\MR {4561800}},
  doi={10.1007/s00220-022-04543-2},
}

\bib{NT2023-2}{article}{
  author={Nakamura, Sh.},
  author={Taira, K.},
  title={A remark on the essential self-adjointness for Klein-Gordon-type operators},
  journal={Ann. Henri Poincar\'e},
  volume={24},
  date={2023},
  number={8},
  pages={2587--2605},
  issn={1424-0637},
  review={\MR {4612312}},
  doi={10.1007/s00023-023-01277-2},
}

\bib{Radzikowski1996}{article}{
  author={Radzikowski, M.},
  title={Micro-local approach to the Hadamard condition in quantum field theory on curved space-time},
  journal={Comm. Math. Phys.},
  volume={179},
  date={1996},
  number={3},
  pages={529--553},
  issn={0010-3616},
  review={\MR {1400751}},
}

\bib{Suss}{article}{
  author={Sussman, E.},
  title={Massive wave propagation near null infinity},
  date={2023},
  eprint={arXiv:2305.01119},
}

\bib{Vasy2000}{article}{
  author={Vasy, A.},
  title={Propagation of singularities in three-body scattering},
  language={English, with English and French summaries},
  journal={Ast\'erisque},
  number={262},
  date={2000},
  pages={vi+151},
  issn={0303-1179},
  review={\MR {1744795}},
}

\bib{Vasy2001}{article}{
  author={Vasy, A.},
  title={Propagation of singularities in many-body scattering},
  language={English, with English and French summaries},
  journal={Ann. Sci. \'Ecole Norm. Sup. (4)},
  volume={34},
  date={2001},
  number={3},
  pages={313--402},
  issn={0012-9593},
  review={\MR {1839579}},
  doi={10.1016/S0012-9593(01)01066-7},
}

\bib{Vasy2013}{article}{
  author={Vasy, A.},
  title={Microlocal analysis of asymptotically hyperbolic and Kerr-de Sitter spaces (with an appendix by Semyon Dyatlov)},
  journal={Invent. Math.},
  volume={194},
  date={2013},
  number={2},
  pages={381--513},
  issn={0020-9910},
  review={\MR {3117526}},
  doi={10.1007/s00222-012-0446-8},
}

\bib{Vasy2013-2}{article}{
  author={Vasy, A.},
  title={Microlocal analysis of asymptotically hyperbolic spaces and high-energy resolvent estimates},
  conference={ title={Inverse problems and applications: inside out. II}, },
  book={ series={Math. Sci. Res. Inst. Publ.}, volume={60}, publisher={Cambridge Univ. Press, Cambridge}, },
  isbn={978-1-107-03201-9},
  date={2013},
  pages={487--528},
  review={\MR {3135765}},
}

\bib{Vasy2018}{article}{
  author={Vasy, A.},
  title={A minicourse on microlocal analysis for wave propagation},
  conference={ title={Asymptotic analysis in general relativity}, },
  book={ series={London Math. Soc. Lecture Note Ser.}, volume={443}, publisher={Cambridge Univ. Press, Cambridge}, },
  isbn={978-1-316-64940-4},
  date={2018},
  pages={219--374},
  review={\MR {3792086}},
}

\bib{Vasy2020}{article}{
  author={Vasy, A.},
  title={Essential self-adjointness of the wave operator and the limiting absorption principle on Lorentzian scattering spaces},
  journal={J. Spectr. Theory},
  volume={10},
  date={2020},
  number={2},
  pages={439--461},
  issn={1664-039X},
  review={\MR {4107521}},
  doi={10.4171/JST/301},
}

\bib{Vasy2021}{article}{
  author={Vasy, A.},
  title={Limiting absorption principle on Riemannian scattering (asymptotically conic) spaces, a Lagrangian approach},
  journal={Comm. Partial Differential Equations},
  volume={46},
  date={2021},
  number={5},
  pages={780--822},
  issn={0360-5302},
  review={\MR {4265461}},
  doi={10.1080/03605302.2020.1857400},
}

\bib{Vasy2021-2}{article}{
  author={Vasy, A.},
  title={Resolvent near zero energy on Riemannian scattering (asymptotically conic) spaces, a Lagrangian approach},
  journal={Comm. Partial Differential Equations},
  volume={46},
  date={2021},
  number={5},
  pages={823--863},
  issn={0360-5302},
  review={\MR {4265462}},
  doi={10.1080/03605302.2020.1857401},
}

\end{biblist}
\end{bibdiv}
\end{document}